\numberwithin{equation}{section}
\def \dis {\displaystyle}
\def \tex {\textstyle}
\def \limi {\;\mathop{\longrightarrow}_{n\to\infty}\;}
\def \into {\int_\Omega}
\def \confai {-\kern -.5em\rightharpoonup}
\def \cqfd {\hfill$\Box$}
\def \div{\mbox{\rm div}}
\def \Div{\mbox{\rm Div}}
\def \curl{\mbox{\rm curl}}
\def \Curl{\mbox{\rm Curl}}
\def \tr {\mbox{\rm tr}}
\newcommand{\bfm}[1]{\mbox{\boldmath ${#1}$}}
\def \al {\alpha}
\def \Ga {\Gamma}
\def \de {\delta}
\def \ep {\varepsilon}
\def \om {\omega}
\def \Om {\Omega}
\def \la {\lambda}
\def \La {\Lambda}
\def \ph {\varphi}
\def \si {\sigma}
\def \AA {\mathbb A}
\def \BB {\mathbb B}
\def \NN {\mathbb N}
\def \RR {\mathbb R}
\def \D {\mathscr{D}}
\def \E {\mathscr{E}}
\def \H {\mathscr{H}}
\def \M {\mathscr{M}}
\def \beq {\begin{equation}}
\def \eeq {\end{equation}}
\def \ba {\begin{array}}
\def \ea {\end{array}}
\def \bs {\bigskip}
\def \ms {\medskip}
\def \ss {\smallskip}
\def \ecart {\noalign{\medskip}}
\newtheorem{Thm}{Theorem}[section]
\newtheorem{Pro}[Thm]{Proposition}
\newtheorem{Lem}[Thm]{Lemma}
\newtheorem{Adef}[Thm]{Definition}
\newenvironment{Def}{\begin{Adef}\rm}{\end{Adef}}
\newtheorem{Arem}[Thm]{Remark}
\newenvironment{Rem}{\begin{Arem}\rm}{\end{Arem}}
\newtheorem{Aexa}[Thm]{Example}
\newenvironment{Exa}{\begin{Aexa}\rm}{\end{Aexa}}
\newtheorem{Anot}[Thm]{Notation}
\def \refe #1.{\eqref{#1})}
\def \reff #1.{figure~\ref{#1}}
\def \refs #1.{Section~\ref{#1}}
\def \refss #1.{Subsection~\ref{#1}}
\def \refD #1.{Definition~\ref{#1}}
\def \refT #1.{Theorem~\ref{#1}}
\def \refL #1.{Lemma~\ref{#1}}
\def \refC #1.{Corollary~\ref{#1}}
\def \refP #1.{Proposition~\ref{#1}}
\def \refPt #1.{Properties~\ref{#1}}
\def \refR #1.{Remark~\ref{#1}}
\def \refE #1.{Example~\ref{#1}}
\def \refN #1.{Notation~\ref{#1}}
\newcounter{marnote}
\title{A new div-curl result. Applications to the homogenization of elliptic systems and to the weak continuity of the Jacobian}
\author{
\footnotesize
\begin{tabular}{cccc}
\normalsize\sc M. Briane & \normalsize\sc J. Casado D\'iaz
\\
Institut de Recherche Math\'ematique de Rennes & Dpto. de Ecuaciones Diferenciales y An\'alisis Num\'erico
\\
INSA de Rennes  & Universidad de Sevilla
\\
mbriane@insa-rennes.fr & jcasadod@us.es
\end{tabular}
}
\begin{document}
\maketitle
\begin{abstract}
In this paper a new div-curl result is established in an open set $\Om$ of $\RR^N$, $N\geq 2$, for the product of two sequences of vector-valued functions which are bounded respectively in $L^p(\Om)^N$ and $L^q(\Om)^N$, with ${1/p}+{1/q}=1+{1/(N-1)}$, and whose respectively divergence and curl are compact in suitable spaces. We also assume that the product converges weakly in $W^{-1,1}(\Om)$. The key ingredient of the proof is a compactness result for bounded sequences in $W^{1,q}(\Om)$, based on the imbedding of $W^{1,q}(S_{N-1})$ into $L^{p'}(S_{N-1})$ ($S_{N-1}$ the unit sphere of $\RR^N$) through a suitable selection of annuli on which the gradients are not too high, in the spirit of \cite{DeG1,Man}. The div-curl result is applied to the homogenization of equi-coercive systems whose coefficients are equi-bounded in $L^\rho(\Om)$ for some $\rho>{N-1\over 2}$ if $N>2$, or in $L^1(\Om)$ if $N=2$. It also allows us to prove a weak continuity result for the Jacobian for bounded sequences in $W^{1,N-1}(\Om)$ satisfying an alternative assumption to the $L^\infty$-strong estimate of~\cite{BrNg}.
Two examples show the sharpness of the results.
\end{abstract}
\vskip .5cm\noindent
{\bf Keywords:} div-curl, homogenization, elliptic systems, non equi-bounded coefficients, $\Ga$-convergence, H-convergence, Jacobian, weak continuity.
\par\bs\noindent
{\bf Mathematics Subject Classification:} 35B27, 74Q15
\tableofcontents
\section{Introduction}
In the early 1970s Murat and Tartar noticed that for any sequence $\sigma_n$ weakly converging to $\sigma$ in $L^p_{\rm loc}(\RR^N)$, $N\geq 2$ and $p\in(1,\infty)$, and any sequence $u_n$ converging weakly to $u$ in $W^{1,p'}_{\rm loc}(\RR^N)$ such that $\div\,\sigma_n$ converges strongly in $W^{-1,p}_{\rm loc}(\RR^N)$, a simple integration by parts leads to the convergence
\beq\label{dcel}
\sigma_n\cdot\nabla u_n\rightharpoonup\sigma\cdot\nabla u\quad\mbox{in }\D'(\RR^N).
\eeq
They extended this remark to the more general case where $\nabla u_n$ is replaced by any sequence $\eta_n$ such that $\curl\,\eta_n$ is compact in $W^{-1,p'}_{\rm loc}(\RR^N)$ (see \cite{Mur2}). The successful compensated compactness theory was born with a fruitful application to homogenization theory \cite{Mur}.
\par
Actually, the elementary div-curl \eqref{dcel} contains hidden informations. Indeed, Coifman {\em et al.} proved that if $\div\,\sigma$ is in $W^{-1,s}_{\rm loc}(\RR^N)$ with $s>p$, then $\sigma\cdot\nabla u$ belongs to the Hardy space $\H^1_{\rm loc}(\RR^N)$.
More recently, Conti {\em et al}. \cite{CDM} obtained a new div-curl result relaxing the compensation conditions on $\div\,\sigma_n$ and $\curl\,\eta_n$ to the space $W^{-1,1}_{\rm loc}(\RR^N)$, but assuming that the sequence $\sigma_n\cdot\eta_n$ is equi-integrable.
\par
On the other hand, in the spirit of \cite{Mur,Mur2} and using an appropriate Hodge decomposition of vector-valued fields, it was proved in \cite{BCM} that, given a bounded open set $\Om$ of $\RR^N$, $N\geq 2$, if $p,q\in [1,\infty)$ satisfy
\beq\label{pqNi}
p,q\geq 1,\quad {1\over p}+{1\over q}\leq 1+{1\over N},
\eeq
and if $\sigma_n$, $\eta_n$ satisfy the convergences
\beq\label{cvsinetani}
\sigma_n\rightharpoonup\sigma\;\;
\left\{\ba{ll}
L^p(\Om)^N, & \mbox{if }p>1
\\ \ecart
\M(\Om)^N\,*, & \mbox{if }p=1,
\ea\right.
\quad
\eta_n\rightharpoonup\eta\;\;
\left\{\ba{ll}
L^q(\Om)^N, & \mbox{if }q>1
\\ \ecart
\M(\Om)^N\,*, & \mbox{if }q=1,
\ea\right.
\eeq
and the compensation conditions
\beq\label{dsincetani}
\div\,\sigma_n\to\div\;\sigma\ 
\left\{\ba{ll}
W^{-1,q'}(\Om)^N, & \mbox{if }q>1
\\ \ecart
L^N(\Om)^N, & \mbox{if }q=1,
\ea\right.
\quad
\curl\,\eta_n\to\curl\,\eta\ 
\left\{\ba{ll}
W^{-1,p'}(\Om)^N, & \mbox{if }p>1
\\ \ecart
L^N(\Om)^N, & \mbox{if }p=1,
\ea\right.
\eeq
then there exist two sequences $x_j$ in $\Om$ and $c_j$ in $\RR^N$ such that
\beq\label{dcBCM}
\sigma_n\cdot\eta_n\rightharpoonup\sigma\cdot\eta+\sum_{j=1}^\infty\div\,(c_j\,\delta_{x_j})\quad\mbox{in }\D'(\Om).
\eeq
\par
In this paper we generalize the div-curl result of \cite{Mur2,Tar2,BCM} assuming instead of \eqref{pqNi} the inequality
\beq\label{pqN-1i}
{1\over p}+{1\over q}\leq 1+{1\over N-1}.
\eeq
The statement type is given by the following result which is refined in Theorem~\ref{thmdc} (strict inequality in \eqref{pqN-1i}) below:
\begin{Thm}\label{thmdci}
Assume that \eqref{pqN-1i} holds with the strict inequality. Consider two sequences $\sigma_n$ in $L^p(\Om)^N$ and $\eta_n$ in $L^{p'}(\Om)^N$ satisfying convergences \eqref{cvsinetani}, \eqref{dsincetani} with $\sigma\in L^p(\Om)^N$ and $\eta\in L^{p'}(\Om)^N$, and such that
\beq\label{dcBC}
\sigma_n\cdot\eta_n\mbox{ converges weakly  in }W^{-1,1}(\Om)^N.
\eeq
Then, the weak limit of $\sigma_n\cdot\eta_n$ is $\sigma\cdot\eta$.
\end{Thm}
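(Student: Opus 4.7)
The plan is to determine the weak $W^{-1,1}(\Om)$-limit of $\sigma_n\cdot\eta_n$ distributionally by testing against an arbitrary $\varphi\in C_c^\infty(\Om)$; it suffices to prove
\[
\lim_{n\to\infty}\int_\Om \sigma_n\cdot\eta_n\,\varphi\,dx \;=\; \int_\Om \sigma\cdot\eta\,\varphi\,dx,
\]
since the hypothesized uniqueness of the $W^{-1,1}$-limit then identifies it with $\sigma\cdot\eta$. Because this is a local statement, I may restrict attention to a smooth simply connected neighborhood of $\mathrm{supp}\,\varphi$.

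The first step is a Hodge-type decomposition of $\eta_n-\eta$ built from the hypothesis $\curl\eta_n\to\curl\eta$ strongly in $W^{-1,p'}$. Solving a Hodge--Laplacian problem produces $w_n\in L^{p'}$ with $\curl w_n=\curl(\eta_n-\eta)$ and $w_n\to 0$ strongly in $L^{p'}$; on the simply connected neighborhood the residual $\eta_n-\eta-w_n$ is curl-free and therefore equals $\nabla u_n$ for a single-valued potential $u_n\in W^{1,p'}$. Combining $\eta_n\rightharpoonup\eta$ with $w_n\to 0$ gives $\nabla u_n\rightharpoonup 0$ in $L^{p'}$, and after normalizing the mean one has $u_n\rightharpoonup 0$ in $W^{1,p'}$.

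Substituting $\eta_n=\eta+w_n+\nabla u_n$ splits the test integral into three pieces. The piece $\int\sigma_n\cdot\eta\,\varphi$ converges to $\int\sigma\cdot\eta\,\varphi$ by weak-strong pairing of $L^p$ against $L^{p'}$; the piece $\int\sigma_n\cdot w_n\,\varphi$ vanishes by the same pairing in reverse. The third, after integration by parts, reads
\[
\int\sigma_n\cdot\nabla u_n\,\varphi\,dx \;=\; -\bigl\langle\div\sigma_n,\,\varphi u_n\bigr\rangle_{W^{-1,q'}\!,\,W_0^{1,q}} \;-\; \int u_n\,\sigma_n\cdot\nabla\varphi\,dx.
\]
When $q\le p'$ (so that $W^{1,p'}\subset W^{1,q}$), $\varphi u_n$ is bounded in $W_0^{1,q}$ and converges weakly to $0$; the duality term then vanishes by the strong-weak pairing against $\div\sigma_n\to\div\sigma$ in $W^{-1,q'}$. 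The last term vanishes because Rellich--Kondrachov yields $u_n\to 0$ strongly in $L^{p'}_{\rm loc}$ from its boundedness in $W^{1,p'}$, paired against $\sigma_n\cdot\nabla\varphi$ bounded in $L^p$.

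The principal technical obstacles are constructing $w_n$ with both the correct curl and strong $L^{p'}$ smallness while respecting the geometry of $\Om$, and covering the entire range of $q$ permitted by \eqref{pqN-1i}: the complementary regime $q>p'$ requires an analogous Hodge decomposition of $\sigma_n-\sigma$ driven by the hypothesis on $\div\sigma_n$, followed by the dual integration by parts. In either case the strict form of \eqref{pqN-1i} provides enough slack that the classical Rellich--Kondrachov compactness suffices for the strong local convergence of the potential; the sharper spherical Sobolev embedding advertised in the abstract becomes essential only at the equality boundary, which is precisely what the refined Theorem~\ref{thmdc} is designed to handle.
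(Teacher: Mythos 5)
Your decomposition $\eta_n=\eta+w_n+\nabla u_n$ with $w_n\to 0$ strongly in $L^{p'}$ is legitimate (it is convergence \eqref{cvetan-Dun} in the paper), but the next assertion, that $\nabla u_n\rightharpoonup 0$ in $L^{p'}$ and hence $u_n\rightharpoonup 0$ in $W^{1,p'}$, has no justification: the only \emph{uniform} bound on $\eta_n$ is the $L^q$ bound coming from \eqref{cvsinetani}. The membership $\eta_n\in L^{p'}(\Om)^N$ in the statement is a qualitative regularity for each fixed $n$ (it is condition \eqref{siLsetaLs'} with $s_n=p$, whose role is only to make $\sigma_n\cdot\eta_n$ a well-defined integrable function); if it were a uniform bound the theorem would collapse to the classical div-curl lemma. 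So $u_n$ is bounded only in $W^{1,q}$, and your last step, $\int_\Om u_n\,\sigma_n\cdot\nabla\varphi\,dx\to\int_\Om u\,\sigma\cdot\nabla\varphi\,dx$, needs $u_n\to u$ strongly in $L^{p'}_{\rm loc}(\Om)$, i.e.\ by Rellich--Kondrachov the inequality $p'<q^*_N$, which is exactly the old condition $1/p+1/q<1+1/N$ of \cite{BCM} (cf.\ \eqref{inepqN}). In the genuinely new range $1+1/N\le 1/p+1/q<1+1/(N-1)$ (take $N=3$, $q=2$, $p$ close to $1$, so $p'\gg q^*_3=6$) the embedding $W^{1,q}(\Om)\hookrightarrow L^{p'}_{\rm loc}(\Om)$ fails, the product $u_n\,\sigma_n\cdot\nabla\varphi$ is not even controlled in $L^1$, and your argument breaks down precisely where the theorem says something beyond the known results. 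Your closing remark that the spherical embedding is ``essential only at the equality boundary'' is therefore incorrect: it is essential on the whole interval between the bound $1/N$ and the bound $1/(N-1)$.

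This is exactly why the paper does not test against arbitrary $\varphi\in C^\infty_c(\Om)$ but against radial functions $\psi_n(x)=\varphi_n(|x-x_0|)$ whose radial derivative is supported in a selected set $U_n$ of ``good'' radii on which $\int_{S_{N-1}}|\nabla u_n(ry)|^q\,ds(y)$ stays below a threshold (Lemma~\ref{lemLpW1q}); there the embedding $W^{1,q}(S_{N-1})\hookrightarrow L^{p'}(S_{N-1})$ is compact, because the sphere has dimension $N-1$ and \eqref{pqN-1i} is strict, and this yields the strong convergence needed to pass to the limit in $\int(\sigma_n\nabla\psi_n)\cdot u_n\,dx$. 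The error committed by replacing $\psi$ by $\psi_n$ is then absorbed using the equi-integrability of the radial functions $h_n$ provided by the weak $W^{-1,1}$ convergence (Lemma~\ref{lem.wcW-11}); note that in your proof the $W^{-1,1}$ hypothesis is used only to identify the limit, whereas if plain distributional convergence sufficed, the Dirac concentration terms of \cite{BCM} recalled in \eqref{dcBCM} could not occur --- the hypothesis must be (and in the paper is) used quantitatively. Finally, the cases $p=1$ or $q=1$ allowed by \eqref{cvsinetani}--\eqref{dsincetani}, where one of the sequences only converges in the sense of measures, are not addressed by your duality argument at all, and the general limits $\sigma,\eta$ not satisfying \eqref{siLsetaLs'} force the weak formulation \eqref{concte1} rather than the plain identity $\mu=\sigma:\eta$ in the refined Theorem~\ref{thmdc}.
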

\noindent
When equality holds in \eqref{pqN-1i}, Theorem~\ref{thmdci} is also extended to Theorem~\ref{calipn1} (case $p>1$) and to Theorem~\ref{calipi1} (case $p=1$), under some equi-integrability assumption on $|\eta_n|$. Moreover, a counterexample to the div-curl result is given when this equi-integrability condition does not hold (see Proposition~\ref{procex} below).
\par\bs
The proof of Theorem~\ref{thmdci} differs notably from the ones of \cite{Mur2,Tar2,BCM}. In fact, the improvement from the bound ${1/N}$ to the bound ${1/(N-1)}$ is connected to the imbedding, related to the unit sphere $S_{N-1}$ of $\RR^N$, of $W^{1,q}(S_{N-1})$ into $L^{p'}(S_{N-1})$, which is compact when inequality \eqref{pqN-1i} is strict. Our approach is inspired by both
\begin{itemize}
\item[-] De Giorgi's method \cite{DeG1} for matching boundary values in $\Ga$-convergence, which consists in finding suitable annuli where the energy does not concentrate,
\item[-] Manfredi's method \cite{Man} for proving the continuity of a weakly monotone ({\em i.e.} satisfying a maximum principle) function in $W^{1,m}$, with $m>N-1$, which consists in selecting spheres on which the gradient of the function is not too high.\footnote{Manfredi's method was used in \cite{BrCa3} to derive, thanks to the maximum principle, a uniform convergence result for sequences of solutions to elliptic equations with non equi-bounded coefficients. But of course this approach cannot be extended to elliptic systems.}
\end{itemize}
Then, the key ingredient of the proof of Theorem~\ref{thmdci} is given by the following result refined in Lemma~\ref{lemLpW1q} below:
\begin{Lem}\label{lemi}
Let $N\geq 2$, $0<R_0<R$, and $q>1$. Consider a sequence $u_n$ which converges weakly to $u$ in $W^{1,q}\big(\{R_0\!<\!|x|<\!R\}\big)$.
Then, there exists a closed set $U_n\subset(R_0,R)$, whose measure is arbitrarily close to $R-R_0$, such that
\[
\left\{\ba{lll}
\dis \sup_{r\in U_n}\left(\int_{S_{N-1}}\big|u_n(ry)-u(ry)\big|^s\,ds(y)\right)\to 0,
& 1\leq s<q^*_{N-1}=\tex{\big({1\over q}-{1\over N-1}\big)^{-1}}, & \hbox{if }q<N-1
\\ \ecart
\dis \sup_{r\in U_n}\left(\int_{S_{N-1}}\big|u_n(ry)-u(ry)\big|^s\,ds(y)\right)\to 0,
& 1\leq s<\infty & \hbox{if }q=N-1
\\ \ecart
\dis \sup_{r\in U_n}\left(\sup_{y\in S_{N-1}}\big|u_n(ry)-u(ry)\big|\right)\to 0, & & \hbox{if }q>N-1.
\ea\right.
\]
\end{Lem}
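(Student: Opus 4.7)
The plan is to work in polar coordinates, isolate through a De Giorgi / Manfredi-type selection a large set of radii on which $v_n := u_n - u$ has simultaneously a bounded $W^{1,q}(S_{N-1})$-norm on the sphere and a vanishing $L^q(S_{N-1})$-norm, and conclude by interpolating against the Sobolev embedding of $W^{1,q}(S_{N-1})$. The weak convergence $v_n \rightharpoonup 0$ in $W^{1,q}$ on the annulus $A := \{R_0 < |x| < R\}$ and the hypothesis $R_0 > 0$ yield, via Rellich's theorem, $v_n \to 0$ strongly in $L^q(A)$. Writing $A$ in polar coordinates and using the pointwise bound $|\nabla_y v_n(r\,\cdot\,)|(y) \leq r\,|\nabla v_n(ry)|$ for the intrinsic tangential gradient on $S_{N-1}$, the nonnegative functions
$$\Phi_n(r) := \|v_n(r\,\cdot\,)\|_{L^q(S_{N-1})}^q, \qquad \Psi_n(r) := \|v_n(r\,\cdot\,)\|_{W^{1,q}(S_{N-1})}^q$$
are well defined for a.e. $r \in (R_0, R)$ and satisfy $\int_{R_0}^R \Phi_n\,dr \to 0$ (by Rellich) and $\int_{R_0}^R \Psi_n\,dr \leq M$ (by the uniform $W^{1,q}(A)$-bound), with $M$ independent of $n$.

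Given arbitrary $\varepsilon > 0$, I set $K := M/\varepsilon$ and $\alpha_n := \big(\int_{R_0}^R \Phi_n\,dr\big)^{1/2}$. Chebyshev's inequality then yields
$$\big|\{r : \Psi_n(r) > K\}\big| \leq \varepsilon, \qquad \big|\{r : \Phi_n(r) > \alpha_n\}\big| \leq \alpha_n \underset{n \to \infty}{\longrightarrow} 0,$$
so by inner regularity of Lebesgue measure I can select a closed set $U_n \subset (R_0, R)$, of measure at least $R - R_0 - 2\varepsilon$ for $n$ large, on which simultaneously $\Psi_n \leq K$ and $\Phi_n \leq \alpha_n$. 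Since $\varepsilon$ is arbitrary, $|U_n|$ can be brought as close to $R - R_0$ as one wishes. For every $r \in U_n$ I then have, uniformly,
$$\|v_n(r\,\cdot\,)\|_{W^{1,q}(S_{N-1})} \leq K^{1/q}, \qquad \|v_n(r\,\cdot\,)\|_{L^q(S_{N-1})} \leq \alpha_n^{1/q} \longrightarrow 0.$$

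The three cases now follow from Sobolev embedding on the sphere combined with log-convex interpolation against the small $L^q$-norm. For $q < N-1$, the embedding $W^{1,q}(S_{N-1}) \hookrightarrow L^{q^*_{N-1}}(S_{N-1})$ and the inequality $\|f\|_{L^s} \leq \|f\|_{L^q}^{1-\theta}\,\|f\|_{L^{q^*_{N-1}}}^{\theta}$ with $\theta < 1$ give the claim for every $s \in [1, q^*_{N-1})$. For $q = N-1$, the same argument applies with $L^{q^*_{N-1}}$ replaced by any fixed $L^t$, $t < \infty$, since $W^{1,N-1}(S_{N-1}) \hookrightarrow L^t(S_{N-1})$. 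For $q > N-1$, the Gagliardo--Nirenberg inequality on $S_{N-1}$, namely $\|f\|_{L^\infty} \leq C\,\|\nabla f\|_{L^q}^{(N-1)/q}\,\|f\|_{L^q}^{1-(N-1)/q}$, forces $\|v_n(r\,\cdot\,)\|_{L^\infty(S_{N-1})} \to 0$ uniformly in $r \in U_n$. The main obstacle is the \emph{simultaneous} control of $\Phi_n$ and $\Psi_n$ on a single set $U_n$: the De Giorgi / Manfredi trade-off exchanges $\varepsilon$ in measure against the energy level $K = M/\varepsilon$, and only thanks to this compromise does the sphere-Sobolev embedding deliver a conclusion that is uniform in $r \in U_n$.
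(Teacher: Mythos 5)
Your argument is correct in substance, but it follows a genuinely different route from the paper. The paper selects $U_n$ by a Chebyshev condition on the \emph{gradient} alone, obtains the uniform-in-$r$ smallness of $\|u_n(r\,\cdot)-u(r\,\cdot)\|_{L^q(S_{N-1})}$ on the whole interval from Simon's compactness theorem (the sequence $v_n-v$ is bounded in $L^q(R_0,R;W^{1,q}(S_{N-1}))$ with $\partial_r(v_n-v)$ bounded in $L^q$, hence converges in $C^0([R_0,R];L^q(S_{N-1}))$), and then concludes with the Ehrling--Lions inequality $\|w\|_X\leq C_\delta\|w\|_{L^q(S_{N-1})}+\delta\|D_\tau w\|_{L^q(S_{N-1})}$, letting first $n\to\infty$ and then $\delta\to 0$. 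You instead avoid both Simon's theorem and the Ehrling lemma: Rellich on the annulus gives $\int_{R_0}^R\Phi_n\,dr\to 0$, and a second Chebyshev selection with the slowly vanishing threshold $\alpha_n=(\int\Phi_n)^{1/2}$ produces a set where both $\Psi_n\leq M/\varepsilon$ and $\Phi_n\leq\alpha_n$ hold, after which the Sobolev embedding of $W^{1,q}(S_{N-1})$ plus log-convex interpolation closes the three cases. This is more elementary and self-contained; what the paper's version buys in exchange is (i) a stronger intermediate fact ($C^0$ in $r$ over all of $[R_0,R]$ for the $L^q(S_{N-1})$ norm) and (ii) a set $U_n$ defined purely by a gradient level set at height $\lambda$, with the explicit measure estimate $|U\setminus U_n|\leq C/\lambda$, a structure that is reused verbatim in the proof of the div-curl theorem (the cut-off $\varphi_n$ is built from $1_{U_n}$ and the $\lambda$-bound on $U_n$ is needed there); your $U_n$ does not carry that structure, but the lemma as stated does not require it. One small repair: the case $q>N-1$ cannot be run with $\|f\|_{L^\infty(S_{N-1})}\leq C\|\nabla f\|_{L^q}^{(N-1)/q}\|f\|_{L^q}^{1-(N-1)/q}$, which fails on the sphere for nonzero constants; you must use the multiplicative inequality with the full norm, $\|f\|_{L^\infty(S_{N-1})}\leq C\|f\|_{W^{1,q}(S_{N-1})}^{\theta}\|f\|_{L^q(S_{N-1})}^{1-\theta}$ (or Morrey embedding into $C^{0,\beta}$ followed by interpolation of $L^\infty$ between $C^{0,\beta}$ and $L^q$). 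Since your selection controls the full $W^{1,q}(S_{N-1})$ norm by $K^{1/q}$ on $U_n$, this changes nothing else in the argument.
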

\noindent
Lemma~\ref{lemi} means that one can select a $n$-dependent set $U_n$ of annuli on which a strong estimate of $u_n-u$ holds. This set is built from not too high values of $|\nabla u_n|$ (see the definition \eqref{l2pr2Un} of $U_n$).
Lemma~\ref{lemi} also extends to Lemma~\ref{le1cc} in connection with Theorem~\ref{calipn1} (critical case $s=q^*_{N-1}$), and to Lemma~\ref{le2cc} (case $q=N-1$, with a uniform convergence result) in connection with Theorem~\ref{calipi1}, under a suitable equi-integrability assumption on $|\nabla u_n|$.
\par\bs
Beyond H-convergence for sequences of conductivity equations \cite{Mur}, which is historically linked to the classical div-curl lemma of \cite{Mur2}, Tartar \cite{Tar2} extended its application field to various pde's including the hyperbolic equations. In the spirit of H-convergence, the div-curl approach was applied to linear elasticity in \cite{Fra}. The seminal works \cite{Spa,Mur} on homogenization of elliptic problems are based on the boundedness (from below and above) of the sequences of coefficients involving in the equations. More recently, the boundedness assumption has been relaxed thanks to an appropriate extension of the div-curl lemma in conductivity \cite{Bri2,BrCa,BCM}, and in elasticity~\cite{BrCE1}. In these works the dimension is $N=2$, and the sequences of coefficients are assumed to be uniformly bounded in $L^1$. The $L^1$-boundedness condition has been removed in the setting of the homogenization of linear and nonlinear scalar problems \cite{BrCa1,BBC,BrCa3} using the maximum principle in an essential way. Up to our knowledge, except the recent approach of \cite{BrCa4} which is however based on a quite restrictive equi-integrability condition, the only available tool for deriving compactness results in the homogenization of sequences of systems with $L^1$-bounded coefficients remains the div-curl lemma. So, the linear elasticity result \cite{BrCE1} shows that in dimension two the violation of the $L^1$-bound may induce second gradient terms in the homogenized equation. This anomalous behavior was previously observed in \cite{PiSe} in dimension three with a two-scale approach. In fact, the situation in three-dimensional linear elasticity is much more intricate since the closure set of equations is very large as shown in \cite{CESe2}, while it is limited by the Beurling-Deny representation formula \cite{BeDe} in the conductivity case \cite{CESe1}. In view of the compactness result of \cite{CaSb} {\em versus} the nonlocal effects obtained in \cite{FeKh,Khr,BeBo,BrTc,CESe1} and naturally connected with the Beurling-Deny formula by \cite{Mos}, the good assumption to avoid any loss of compactness in the homogenization process seems to be, at least in the scalar case and in any dimension, the equi-boundedness and the equi-integrability in $L^1$ of the sequences of coefficients.
\par
In this context and as a by-product of the div-curl result of Theorem~\ref{thmdci} and its extensions, we have the following homogenization result which is refined in Theorem~\ref{thmGc} (with a $\Gamma$-convergence approach), and in Theorem~\ref{thmHc} (with a H-convergence approach) below:
\begin{Thm}\label{thm.Gci}
Let $\Om$ be a bounded open set of $\RR^N$, $N\geq 2$, and let $M$ be a positive integer.
Consider a non-negative symmetric tensor-valued function $\AA_n$ in $L^\infty(\Om)^{(M\times N)^2}$ such that there exists a constant $\al>0$ satisfying
\beq\label{ellAali}
\al\int_\Om\,|Dv|^2\,dx\leq\int_\Om\AA_n Dv:Dv\,dx,\quad\forall\,v\in H^1_0(\Om)^M,
\eeq
and such that
\beq\label{cvAnLai}
|\AA_n|\mbox{ is bounded in}\quad
\left\{\ba{lll}
L^1(\Om), & & \mbox{if }N=2
\\ \ecart
L^\rho(\Om), & \dis \mbox{ with }\rho>{N-1\over 2}, & \mbox{if }N>2.
\ea\right.
\eeq
Then, there exist a subsequence of $n$, still denoted by $n$, and a non-negative symmetric tensor-valued $\AA\in\M(\Om)^{(M\times N)^2}$ if $N=2$, or $\AA\in L^{\rho}(\Om)^{(M\times N)^2}$ if $N>2$, satisfying \eqref{ellAali}, such that the following $\Ga$-convergence for the $L^2(\Om)^M$ strong topology holds
\[
\left\{\ba{ll}
\dis \left(v\in H^1_0(\Om)^M\mapsto\int_\Om\AA_n Dv:Dv\,dx\right)
\stackrel{\Ga}{\to}
\left(v\in C^1_0(\Om)^M\mapsto\int_\Om\AA Dv:Dv\,dx\right), & \mbox{if }N=2
\\ \ecart
\dis \left(v\in H^1_0(\Om)^M\mapsto\int_\Om\AA_n Dv:Dv\,dx\right)
\stackrel{\Ga}{\to}
\left(v\in W^{1,{2\rho\over\rho-1}}_0(\Om)^M\mapsto\int_\Om\AA Dv:Dv\,dx\right), & \mbox{if }N>2.
\ea\right.
\]
\end{Thm}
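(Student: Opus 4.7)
The plan is to follow the standard H-/$\Gamma$-convergence blueprint, using the new div-curl result of Theorem~\ref{thmdci} (and its critical-case refinements) as the key tool to handle coefficients merely equi-bounded in $L^\rho$ with $\rho>(N-1)/2$, rather than in $L^\infty$. First I would extract a subsequence, using the $L^\rho$ (resp.\ $\M$ for $N=2$) bound on $|\AA_n|$, such that $\AA_n\rightharpoonup\AA$ weakly in $L^\rho(\Om)^{(M\times N)^2}$ (resp.\ weakly-$*$ in $\M$). Symmetry and pointwise non-negativity of $\AA$ pass to the weak limit by testing against $\phi\,(\xi\otimes\xi)$ with $\phi\in C^0_c(\Om)$, $\phi\ge 0$, and $\xi$ constant. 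The coercivity \eqref{ellAali} is transferred to $\AA$ by testing against $v\in C^1_c(\Om)^M$: indeed $Dv\otimes Dv$ lies in $C^0_c(\Om)$ (resp.\ in $L^{\rho'}(\Om)$), which is the predual of the weak-convergence space of $\AA_n$.

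Next I would establish the $\Gamma$-limsup. For any $v$ in the effective domain ($C^1_0(\Om)^M$ if $N=2$; $W^{1,2\rho/(\rho-1)}_0(\Om)^M$ if $N>2$) the constant recovery sequence $v_n=v$ works: since $Dv\in L^{2\rho/(\rho-1)}(\Om)$ gives $Dv\otimes Dv\in L^{\rho/(\rho-1)}(\Om)=L^{\rho'}(\Om)$, the weak convergence of $\AA_n$ tested against $Dv\otimes Dv$ directly yields $\int_\Om \AA_n Dv:Dv\,dx\to\int_\Om \AA Dv:Dv\,dx$.

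The $\Gamma$-liminf is the hard part. Given $v_n\to v$ in $L^2(\Om)^M$ with uniformly bounded energy, the coercivity gives $v_n\rightharpoonup v$ in $H^1_0(\Om)^M$. For each such $v$ in the effective domain I would introduce a corrector $P_n\in H^1_0(\Om)^M$ adapted to $v$ (obtained for instance as the solution of
\[
-\div(\AA_n DP_n)=-\div(\AA Dv)\quad\mbox{in }\Om,\qquad P_n-v\in H^1_0(\Om)^M,
\]
well-posed for each fixed $n$ thanks to $\AA_n\in L^\infty$; when this formulation escapes the natural Sobolev space I would fall back on a piecewise-affine approximation of $v$ with H-convergence correctors on each piece), so that $P_n\rightharpoonup v$ and $\AA_n DP_n\rightharpoonup\AA Dv$ in the appropriate $L^r$. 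Non-negativity of $\AA_n$ on $Dv_n-DP_n$ then yields the standard inequality
\[
\int_\Om \AA_n Dv_n:Dv_n\,dx\ \ge\ 2\int_\Om \AA_n Dv_n:DP_n\,dx-\int_\Om \AA_n DP_n:DP_n\,dx.
\]
To pass to the limit in the cross-term I would set $\sigma_n=\AA_n DP_n$ and $\eta_n=Dv_n$: then $\curl\eta_n=0$, $\div\sigma_n=\div(\AA Dv)$ is $n$-independent (hence strongly compact in some $W^{-1,s}$), and $\sigma_n\cdot\eta_n$ is bounded in $W^{-1,1}$ thanks to $L^p\times L^{p'}$ integrability, with $p$ chosen in terms of $\rho$ so as to satisfy the strict inequality $1/p+1/p'<1+1/(N-1)$ of Theorem~\ref{thmdci}. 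The theorem then delivers $\int_\Om \sigma_n\cdot\eta_n\,dx\to\int_\Om \AA Dv:Dv\,dx$. The self-term is treated analogously with $\eta_n=DP_n$, and combining the two limits produces $\liminf_n\int_\Om \AA_n Dv_n:Dv_n\,dx\ge \int_\Om \AA Dv:Dv\,dx$, the desired $\Gamma$-liminf inequality.

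The hardest part will be the construction of the corrector $P_n$ in a functional space compatible with the merely $L^\rho$-integrability of $\AA_n$, together with the verification of the hypotheses of Theorem~\ref{thmdci} -- in particular the $W^{-1,1}$-weak convergence of $\sigma_n\cdot\eta_n$ -- under this weak integrability bound. It is precisely the sharpened exponent constraint $1/p+1/q<1+1/(N-1)$ in the new div-curl that opens the range $\rho>(N-1)/2$, whereas the classical Murat--Tartar bound $1/p+1/q\le 1+1/N$ would only suffice down to $\rho>N-1$. The borderline case $N=2$, where $\AA$ is a measure, additionally calls on the equi-integrability refinement of Theorem~\ref{calipi1}.
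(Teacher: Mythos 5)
There is a genuine gap, and it is at the very start: you define the limit tensor $\AA$ as the weak (resp.\ weak-$*$) limit of $\AA_n$ in $L^\rho(\Om)^{(M\times N)^2}$ (resp.\ $\M(\Om)^{(M\times N)^2}$). The homogenized tensor is \emph{not} the weak limit of the coefficients: already for periodic laminates with coefficients bounded above and below, the $\Gamma$-limit energy involves harmonic-type means and is strictly below the ``arithmetic mean'' energy $\int_\Om(\mathrm{w\mbox{-}lim}\,\AA_n)Dv:Dv\,dx$. With your choice of $\AA$, the constant sequence $v_n=v$ only gives the upper bound $F(v)\leq\int_\Om\AA Dv:Dv\,dx$, and the complementary liminf inequality is false in general; accordingly the step where it would have to be proved is exactly where your argument breaks. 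The corrector $P_n$ solving $-\Div(\AA_n DP_n)=-\Div(\AA Dv)$ does \emph{not} satisfy $P_n\rightharpoonup v$ and $\AA_n DP_n\rightharpoonup\AA Dv$ when $\AA$ is the weak limit of $\AA_n$; those convergences are essentially the statement of H-convergence to $\AA$, i.e.\ they presuppose the very identification of the effective tensor that the theorem is meant to establish (your fallback ``H-convergence correctors on each piece'' has the same circularity). Once the correct $\AA$ is unknown, the div-curl theorem cannot identify the limit of the cross term as $\int_\Om\AA Dv:Dv\,dx$: it only tells you that the limit of $\sigma_n\cdot\eta_n$ is the product of the limits, and the limit of $\sigma_n=\AA_n DP_n$ is precisely what you have no handle on.

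The paper proceeds differently and this is where the real work lies. It first extracts the $\Gamma$-limit $F$ abstractly (De Giorgi compactness), so no explicit recovery sequences are needed; the inclusion $W^{1,2\rho/(\rho-1)}_0(\Om)^M\subset D(F)$ (or $W^{1,\infty}_0$ if $N=2$) follows from H\"older and \eqref{cvAnLai}. Then, for $f$ in a countable dense subset of $L^2(\Om)^M$, it solves $-\Div(\AA_nDu_n)=f$ and defines the limit flux $\sigma(u)$ and limit energy density $\nu(u,u)$ as the weak limits of $\AA_nDu_n$ (in $L^p$, $p=2\rho/(1+\rho)$, or in $\M(\Om)$ if $N=2$) and of $\AA_nDu_n:Du_n$ (in $\M(\Om)$); these maps are extended to $D(F)$ by continuity. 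The new div-curl result (Theorem~\ref{thmdc}, with $\sigma_n=\AA_nDu_n$, $\eta_n=Dv_n$ a recovery sequence, the product being bounded in $L^1$ hence weakly convergent in $W^{-1,1}$) is used only to derive the local relation $\nu(u,v)=\sigma(u):Dv$, and the tensor $\AA$ is then \emph{constructed} by localizing $\sigma$ on affine functions (formula \eqref{Afiej}), after which measure-differentiation arguments give $\sigma(v)=\AA Dv$ and $\nu^L(v,v)=\AA Dv:Dv$, hence the stated representation of $F$. So while your use of the exponent range $\rho>(N-1)/2$ via the sharpened div-curl inequality is the right intuition, the proposal is missing the entire construction and identification of the effective tensor, and the object you propose in its place is the wrong one.
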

Note that in dimension three the result of Theorem~\ref{thm.Gci} holds if the sequence $|\AA_n|$ is bounded in some $L^\rho$ space with $\rho>1$. This condition is stronger than the equi-integrability of the coefficients, but is not so far from it. Alternatively, assuming that $\AA_n$ is close in $L^1$-norm to an equi-coercive and equi-bounded sequence $\BB^n$,  we have obtained in \cite{BrCa4} a similar compactness result by a quite different approach. Also note that the two-dimensional case of Theorem~\ref{thm.Gci} includes the homogenization results of \cite{BrCa,BrCa1,BrCE1}.
\par\bs
The classical div-curl lemma and more generally the compensated compactness has been successively used for weak continuity problems \cite{Mur,Mur2,Tar}, and in particular for the weak continuity of the Jacobian in connection with the calculus of variations \cite{Mor,Res,Ball,BaMu,Dac}. The divergence formulation of the Jacobian, denoted as Det, was originally established by Morrey~\cite{Mor}, and leads to the classical weak continuity result (see, {\em e.g.}, \cite{Ball,DaMu,IwSb,FLM}): for any regular open bounded set $\Om$ of $\RR^N$, $N\geq 2$, and for any $s>{N^2\over N+1}$,
\beq\label{cvDeti}
u_n\rightharpoonup u\;\;\mbox{in }W^{1,s}(\Om)^N\;\;\Rightarrow\;\;{\rm Det}\,(Du_n)\rightharpoonup{\rm Det}\,(Du)\;\;\mbox{in }\D'(\Om).
\eeq
Up to our knowledge, the most recent improvement of \eqref{cvDeti} is due to Brezis and Nguyen \cite{BrNg} who have obtained the weak continuity result
\beq\label{BNdeti}
\left.\ba{lll}
u_n\rightharpoonup u & \mbox{in }W^{1,N-1}(\Om)^N
\\ \ecart
u_n\to u & \mbox{in }L^{\infty}(\Om)^N, & \hbox{if }N=2
\\ \ecart
u_n\to u & \mbox{in }B\!M\!O(\Om)^N, & \hbox{if }N\geq 3
\ea\right\}
\quad\Rightarrow\quad{\rm Det}\,(Du_n)\rightharpoonup{\rm Det}\,(Du)\;\;\mbox{in }\D'(\Om),
\eeq
where the refinement in $B\!M\!O$ is partly based on the div-curl approach of \cite{CLMS}.
Actually, Brezis and Nguyen prove a delicate estimate (see \cite{BrNg}, Theorem~1)   which implies convergence \eqref{BNdeti}.
\par
Using the div-curl result of  Theorem~\ref{calipi1} we prove the alternative weak continuity convergence of the Jacobian in $W^{1,N-1}$ under different assumptions (see Theorem~\ref{thmdet} below for a refined statement):
\begin{Thm}\label{thmdeti}
Let $\Om$ be a bounded open set of $\RR^N$, with $N\geq 2$.
Consider a sequence of vector-valued functions $u_n=\left(u_n^1,\dots,u_n^N\right)$ in $W^{1,N}(\Om)^M$ satisfying
\beq\label{codjuni}
u_n\rightharpoonup u=\left(u^1,\dots,u^N\right)\quad
\left\{\ba{ll}
\hbox{in }W^{1,N-1}(\Om)^N, & \mbox{if }N>2
\\ \ecart
\hbox{in }BV(\Om)^N\,*, & \mbox{if }N=2,
\ea\right.
\eeq
\beq\label{thmde1i}
{\rm Det}\,(Du_n)\rightharpoonup \mu\quad\hbox{in }W^{-1,1}(\Om).
\eeq
Also assume that $\nabla u_n^1$ is equi-integrable in the Lorentz space $L^{N-1,1}(\Om)^N$.
\\
Then, the limit distribution $\mu$ is given by the variational formulation
\beq\label{demui}
\langle\mu,\psi\rangle=-\into\left(\sum_{j=1}^k{\rm cof}\,(Du)_{1j}\,\partial_j\psi\, u^1\right)dx,
\eeq
for a suitable dense set of radial fonctions $\psi$ in $W^{1,\infty}_0(\Om)$.
\end{Thm}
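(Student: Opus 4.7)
The plan is to express the Jacobian as a div-curl product and apply the new div-curl Theorem~\ref{calipi1} in its critical case $p=1$, $q=N-1$ (the equality case of \eqref{pqN-1i}); the restriction to \emph{radial} test functions $\psi$ is imposed precisely to exploit the sphere-wise strong convergence furnished by Lemma~\ref{le2cc}.

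I start from the Piola identity: for $u_n\in W^{1,N}(\Om)^N$ the distributional Jacobian coincides with the pointwise determinant, and integration by parts yields
\[
\int_\Om{\rm Det}(Du_n)\,\psi\,dx=-\int_\Om u_n^1\sum_{j=1}^N{\rm cof}(Du_n)_{1j}\,\partial_j\psi\,dx
\]
for every $\psi\in W^{1,\infty}_0(\Om)$. Setting $\sigma_n:={\rm cof}(Du_n)_{1,\cdot}$ and $\eta_n:=\nabla u_n^1$, one has $\sigma_n\cdot\eta_n={\rm Det}(Du_n)$, $\div\,\sigma_n=0$ (Piola), and $\curl\,\eta_n=0$. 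Hypothesis \eqref{codjuni} bounds $\sigma_n$ in $L^1(\Om)^N$ (each cofactor entry being an $(N-1)$-fold product of gradients in $L^{N-1}$) and gives $\eta_n\rightharpoonup\nabla u^1$ in $L^{N-1}(\Om)^N$, while the weak continuity of minors identifies the weak limit $\sigma:={\rm cof}(Du)_{1,\cdot}$ of $\sigma_n$. The Lorentz $L^{N-1,1}$-equi-integrability of $\nabla u_n^1$ is exactly what Theorem~\ref{calipi1} requires in the equality case of \eqref{pqN-1i}, and \eqref{thmde1i} provides the needed $W^{-1,1}$-compactness of $\sigma_n\cdot\eta_n$.

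To pass to the limit I now restrict to radial $\psi(x)=\phi(|x|)$, so that $\nabla\psi(x)=\phi'(|x|)\,\hat x$ and, by co-area,
\[
-\int_\Om u_n^1\,\sigma_n\cdot\nabla\psi\,dx=-\int_0^R\phi'(r)\,r^{N-1}\int_{S_{N-1}}\big[\sigma_n(ry)\cdot y\big]\,u_n^1(ry)\,ds(y)\,dr.
\]
Lemma~\ref{le2cc}, applied to $u_n^1$ thanks to the Lorentz equi-integrability of $\nabla u_n^1$, produces a set $U_n\subset(0,R)$ of arbitrarily large measure on which $u_n^1(r\cdot)\to u^1(r\cdot)$ uniformly on $S_{N-1}$. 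Combining this uniform convergence with the weak convergence on $S_{N-1}$ of the normal traces $\sigma_n\cdot y$ (well defined because $\div\,\sigma_n=0$, and selectable via the same good-annuli extraction applied to a vector potential of $\sigma_n$), I pass to the limit inside the spherical integral for $r\in U_n$; Fubini together with the fact that $(0,R)\setminus U_n$ has arbitrarily small measure identifies the limit as the right-hand side of \eqref{demui}. The ``suitable dense set'' of the statement is the class of radial $\psi\in W^{1,\infty}_0(\Om)$ whose profile $\phi$ is admissible for this extraction, density in all radial $W^{1,\infty}_0$-functions following by standard approximation.

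The main obstacle is that, under $W^{1,N-1}$-regularity alone, the right-hand side of \eqref{demui} is not a priori a well-defined $L^1$ pairing (by Sobolev embedding $u^1\in L^{N(N-1)}(\Om)$ and $\sigma\in L^1(\Om)^N$ have non-conjugate indices), so testing $\mu$ against a generic $\psi$ via this formula is problematic. Restricting to radial $\psi$ is precisely what converts the spatial pairing into one-dimensional integration against sphere averages, where the good-annuli mechanism of Lemma~\ref{le2cc} applies; certifying the normal-trace convergence of $\sigma_n$ on the selected spheres, through the divergence-free structure, is the delicate technical core.
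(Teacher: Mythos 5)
Your proposal is correct and takes essentially the same route as the paper: you write ${\rm Det}\,(Du_n)=\sigma_n\cdot\eta_n$ with $\sigma_n$ the first cofactor row (divergence free by the Piola identity, weakly-$*$ convergent in $\M(\Om)$ to ${\rm cof}\,(Du)_{1,\cdot}$ by weak continuity of minors) and $\eta_n=\nabla u_n^1$ (curl free, weakly convergent, with the Lorentz equi-integrability), and then invoke the critical-case div-curl Theorem~\ref{calipi1} together with \eqref{thmde1i}, which is exactly the paper's proof of Theorem~\ref{thmdet} in the case $p=1$, $q=N-1$. Your additional sketch of the internals of Theorem~\ref{calipi1} (sphere-wise ``normal traces'' of $\sigma_n$ obtained from a vector potential) is not needed once that theorem is cited, and is not quite how the paper argues --- there the measure $\sigma_n$ is paired with the uniformly convergent restriction of $u_n^1$ on the good annuli selected by Lemma~\ref{le2cc}, with the $W^{-1,1}$ assumption handled by Lemma~\ref{lem.wcW-11} --- but this does not affect the correctness of your reduction.
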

Example~\ref{cexDet} below shows that the loss of equi-integrability for $\nabla u_n^1$ may induce a concentration effect in the weak convergence of the Jacobian. This example also illustrates the sharpness of the weak continuity result of \cite{BrNg}.
\subsubsection*{Notations}
\begin{itemize}
\item $M$ is a positive integer, and $N$ is an integer $\geq 2$.

\item $\left(e_1,\dots,e_N\right)$ denotes the canonical basis of $\RR^N$, and $\left(f_1,\dots,f_M\right)$ the one of $\RR^M$.


\item $:$ denotes the scalar product in $\RR^{M\times N}$, {\em i.e.}  $\xi:\eta=\tr\left(\xi^T\eta\right)$ for any $\xi,\eta\in\RR^{M\times N}$.

\item $B_R$ denotes an open ball of $\RR^N$ centered at the origin zero and of radius $R>0$. The ball centered at the point $x_0$ and of radius $R$ is denoted by $B(x_0,R)$.

\item For $0<R_0<R$, $C(R_0,R)$ denotes the open crown $B_R\setminus \bar B_{R_0}$.

\item $S_{N-1}$ denotes the unit sphere of $\RR^N$ for any integer $N\geq 2$.

\item For any $p\in[1,\infty]$, $p':={p\over p-1}\in[1,\infty]$ denotes the conjugate exponent of $p$.

\item For any $q\in[1,N)$, $q^*_N:=\big({1\over q}-{1\over N}\big)^{-1}$ denotes the critical Sobolev exponent in dimension~$N$.

\item $|E|$ denotes Lebesgue's measure of any measurable set $E\subset\RR^N$. When $E$ is a subset of a manifold of $\RR^N$ of dimension $P\leq N$, $|E|$ is also used to denote the corresponding Hausdorff measure of order $P$.

\item $1_E$ denotes the characteristic function of any set $E$.

\item $\nabla u$ denotes the gradient of the scalar distribution $u:\RR^N\to\RR$.

\item $Du$ denotes the Jacobian matrix of the vector-valued distribution $u:\RR^N\to\RR^M$, {\em i.e.}
\[
Du:=\left[{\partial u_i\over\partial x_j}\right]_{1\leq i\leq M,\,1\leq j\leq N}\in\RR^{M\times N}.
\]

\item $\div$ denotes the classical divergence operator acting on the vector-valued distributions.

\item $\Div$ denotes the vector-valued differential operator taking the divergence of each row of a matrix-valued distribution,
\[
\Div\,V:=\left[\sum_{j=1}^N{\partial V_{ij}\over\partial x_j}\right]_{1\leq i\leq M},\quad\mbox{for }V:\RR^N\to\RR^{M\times N}.
\]

\item $\curl$ denotes the classical curl operator acting on the vector-valued distributions.

\item $\Curl$ denotes the vector-valued differential operator taking the curl of each row of a matrix-valued distribution,
\[
\Curl\,V:=\left[{\partial V_{ij}\over\partial x_k}-{\partial V_{ik}\over\partial x_j}\right]_{1\leq i\leq M,\, 1\leq j,k\leq N},\quad\mbox{for }V:\RR^N\to\RR^{M\times N}.
\]

\item $\M(X)$ denotes the set of the bounded Radon measures on a locally compact set $X$.

\item For a bounded open set $\Om$ of $\RR^N$, $W^{1,\infty}_0(\Om)$ denotes the space of the functions in $W^{1,\infty}(\Om)$ which are equal to $0$ on $\partial\Om$.

\item $W^{-1,1}(\Om)$ denotes the set composed of the divergences of functions in $L^1(\Om)^N$. We can check that the dual of $W^{-1,1}(\Om)$ is $W^{1,\infty}_0(\Om)$ (using essentially the fact that the dual of $L^1(\Om)$ is $L^\infty(\Om)$, and any vector-valued distribution which vanishes on the divergence free functions is a gradient). Hence, the weak convergence of $\mu_n$ to $\mu$ in $W^{-1,1}(\Om)$ reads as
\beq\label{wcW-11}
\langle \mu_n,\ph\rangle\limi\langle \mu,\ph\rangle,\quad\forall\,\ph\in W^{1,\infty}_0(\Om).
\eeq
Note that the weak-$\ast$ convergence in  $\M(\Om)$ implies the weak convergence in $W^{-1,1}(\Om)$.
\end{itemize}
\section{The div-curl result}\label{s.dc}
\subsection{The case: ${1\over p}+{1\over q}<1+{1\over N-1}$}
We have the following div-curl result:
\begin{Thm}\label{thmdc}
Let $\Om$ be a bounded open set of $\RR^N$, with $N\geq 2$, and let $p,q\geq 1$ such that
\beq\label{inepqN-1}
{1\over p}+{1\over q}<1+{1\over N-1}.
\eeq
Consider two sequences of matrix-valued functions $\sigma_n$ and $\eta_n$ such that
\beq\label{siLsetaLs'}
\exists\,s_n\in[p,q'],\quad\sigma_n\in L^{s_n}(\Om)^{M\times N}\;\;\mbox{and}\;\;\eta_n\in L^{s_n'}(\Om)^{M\times N},
\eeq
\beq\label{cvsinetanmu}
\sigma_n:\eta_n\rightharpoonup\mu\quad\mbox{weakly in }W^{-1,1}(\Om).
\eeq
Then, we have the following results according to the cases $p,q>1$, $q=1$ or $p=1$:
\begin{itemize}
\item Assume that $p,q>1$, and that
\beq\label{cvvnpwnq}
\left\{\ba{ll}
\sigma_n\rightharpoonup \sigma & \mbox{in }L^p(\Om)^{M\times N}
\\ \ecart
\eta_n\rightharpoonup \eta & \mbox{in }L^q(\Om)^{M\times N},
\ea\right.
\eeq
\beq\label{dvnqcwnp}
\left\{\ba{ll}
\Div\,\sigma_n\to \Div\,\sigma & \mbox{ in }W^{-1,q'}(\Om)^{M}
\\ \ecart
\Curl\,\eta_n\to \Curl\,\eta & \mbox{ in }W^{-1,p'}(\Om)^{M\times N\times N}.
\ea\right.
\eeq
If the limits $\sigma$ and $\eta$ satisfy condition \eqref{siLsetaLs'}, then
\beq\label{mu=sieta}
\mu=\sigma:\eta.
\eeq
Otherwise, for any function $u$ satisfying
\beq\label{eta-DuLp'}
u\in W^{1,q}(\Om)^M\quad\mbox{and}\quad\eta-Du\in L^{p'}_{\rm loc}(\Om)^{M\times N},
\eeq
the limit $\mu$ satisfies the weak formulation
\beq\label{concte1}
\left\{\ba{l}
\dis \forall\,B(x_0,R)\Subset\Om,\ \forall\,\varphi\in W^{1,\infty} (0,\infty),\mbox{ with }{\rm supp}\,\varphi\subset [0,R],
\\ \ecart
\dis \langle\mu,\psi\rangle=-\,\big\langle{\rm Div}\,\sigma, u\psi\big\rangle
+\int_{B(x_0,R)}\sigma:\big[\eta\psi-D(u\psi)\big]\,dx,
\\ \ecart
\dis\hbox{where }\psi(x):=\varphi(|x-x_0|).
\ea\right.
\eeq

\item Assume that $q=1$, and that
\beq\label{cvvnpwn1}
\left\{\ba{ll}
\sigma_n\rightharpoonup \sigma & \mbox{ in }L^p(\Om)^{M\times N}
\\ \ecart
\eta_n\stackrel{\ast}\rightharpoonup \eta &  \mbox{ in }\M(\Om)^{M\times N},
\ea\right.
\eeq
\beq\label{dvn1cwnp}
\left\{\ba{ll}
\Div\,\sigma_n\to \Div\,\sigma & \mbox{ in }L^N(\Om)^M
\\ \ecart
{\rm Curl}\, \eta_n\to {\rm Curl}\,\eta & \mbox{ in }W^{-1,p'}(\Om)^{M\times N\times N}.
\ea\right.
\eeq
If the limits $\sigma$ and $\eta$ satisfy condition \eqref{siLsetaLs'}, then equality \eqref{mu=sieta} holds.
\par
Otherwise, for any function $u$ satisfying \eqref{eta-DuLp'}, the limit $\mu$ still satisfies the weak formulation \eqref{concte1}.

\item Assume that $p=1$, and that
\beq\label{cvvnpwn13}
\left\{\ba{ll}
\sigma_n\stackrel{\ast}\rightharpoonup \sigma & \mbox{ in }\M(\Om)^{M\times N}
\\ \ecart
\eta_n\rightharpoonup \eta &  \mbox{ in }L^q(\Om)^{M\times N},
\ea\right.
\eeq
\beq\label{dvn1cwnp3}
\left\{\ba{ll}
{\rm Div}\,\sigma_n\to {\rm Div}\,\sigma & \mbox{ in }W^{-1,q'}(\Om)^{M}
\\ \ecart
{\rm Curl}\, \eta_n\to {\rm Curl}\,\eta & \mbox{ in }L^N(\Om)^{M\times N\times N}.
\ea\right.
\eeq
If the limits $\sigma$ and $\eta$ satisfy condition \eqref{siLsetaLs'}, then equality \eqref{mu=sieta} holds.
\par
Otherwise, for any function $u$ satisfying
\beq\label{eta-DuW1N}
u\in W^{1,q}(\Om)^M\quad\mbox{and}\quad\eta-Du\in W^{1,N}_{\rm loc}(\Om)^{M\times N},
\eeq
the limit $\mu$ satisfies the weak formulation
\beq\label{concte3}
\left\{\ba{l}\dis\forall\,B(x_0,R)\Subset\Om,\ \forall\,\varphi\in W^{1,\infty}(0,\infty),\mbox{ with }{\rm supp}\,\varphi\subset [0,R],\mbox{ such that }
\\ \ecart\dis
\exists\,U\mbox{closed set of }[0,R],\mbox{ with }
\left|\ba{l}
(r,y)\mapsto u(x_0+ry)\in C^0\big(U;W^{1,q}(S_{N-1})\big)
\\
\ph'\mbox{ is continuous on $U$ with support in }U,
\ea\right.
\\ \ecart
\dis \langle\mu,\psi\rangle=-\,\big\langle{\rm Div}\,\sigma, u\psi\big\rangle
+\int_{B(x_0,R)}\sigma(dx):\big[\eta\psi-D(u\psi)\big],
\\ \ecart
\dis\hbox{where }\psi(x):=\varphi(|x-x_0|).
\ea\right.
\eeq
\end{itemize}
\end{Thm}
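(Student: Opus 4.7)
The plan is to establish the identity \eqref{concte1} (or the cleaner $\mu = \sigma : \eta$ when the limits themselves lie in the pairing space \eqref{siLsetaLs'}) by testing the convergence \eqref{cvsinetanmu} against a rich enough family of test functions, namely the radial cutoffs $\psi(x) = \varphi(|x-x_0|)$ with $\varphi \in W^{1,\infty}(0,\infty)$ supported in $[0,R]$ and $B(x_0,R) \Subset \Omega$; varying $x_0$ and $\varphi$ is enough to recover $\mu$ locally. On each such ball---which is simply connected---the strong convergence of $\Curl\,\eta_n$ in $W^{-1,p'}$ allows a local Hodge-type decomposition
\[
\eta_n = Dv_n + w_n, \qquad v_n \rightharpoonup v \ \text{in } W^{1,q}(B(x_0,R))^M, \qquad w_n \to w \ \text{strongly in } L^{p'}(B(x_0,R))^{M \times N},
\]
with $\eta = Dv + w$; when a potential $u$ for $\eta$ as in \eqref{eta-DuLp'} is already prescribed one arranges $v = u$.

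Writing $\sigma_n : \eta_n = \sigma_n : Dv_n + \sigma_n : w_n$, the second summand converges to $\sigma : w$ in $\D'(\Omega)$ by the weak--strong pairing in $L^p \times L^{p'}$. For the first, integration by parts against $\psi$ produces
\[
\int_{B(x_0,R)} \sigma_n : Dv_n \, \psi \, dx \;=\; -\,\langle \Div\,\sigma_n,\, v_n \psi \rangle \;-\; \int_{B(x_0,R)} \sigma_n : (v_n \otimes \nabla \psi) \, dx;
\]
the duality pairing converges to $-\langle \Div\,\sigma,\, v\psi \rangle$ by the strong convergence $\Div\,\sigma_n \to \Div\,\sigma$ in $W^{-1,q'}$ together with the weak convergence $v_n \psi \rightharpoonup v \psi$ in $W^{1,q}_0(B(x_0,R))^M$. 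Since the piece with $v$ in place of $v_n$ passes to the limit by weak $L^p$-convergence of $\sigma_n$, the problem reduces to
\[
\int_{B(x_0,R)} \sigma_n : \bigl((v_n - v) \otimes \nabla \psi\bigr) \, dx \ \longrightarrow \ 0.
\]

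Passing to polar coordinates centered at $x_0$, the radiality $\nabla \psi(x) = \varphi'(|x-x_0|)(x-x_0)/|x-x_0|$ recasts this integral as a one-dimensional $r$-average with weight $\varphi'(r)\, r^{N-1}$ of the spherical pairing $\int_{S_{N-1}} (v_n - v)(x_0+ry) \cdot \sigma_n(x_0+ry)\, y \, ds(y)$, whose modulus is bounded via H\"older on $S_{N-1}$ by $\|v_n(r\cdot) - v(r\cdot)\|_{L^{p'}(S_{N-1})} \, \|\sigma_n(r\cdot)\|_{L^p(S_{N-1})}$. The strict inequality \eqref{inepqN-1} is exactly the strict subcritical Sobolev inequality $p' < q^*_{N-1}$, so the embedding $W^{1,q}(S_{N-1}) \hookrightarrow L^{p'}(S_{N-1})$ is compact---an improvement over the ball embedding $W^{1,q}(B) \hookrightarrow L^{q^*_N}(B)$ that is crucial in the regime $1/p + 1/q \geq 1 + 1/N$. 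Lemma~\ref{lemi} then provides closed sets $U_n \subset [0,R]$ of arbitrarily large measure on which
\[
\sup_{r \in U_n}\, \|v_n(r\cdot) - v(r\cdot)\|_{L^{p'}(S_{N-1})} \ \longrightarrow \ 0,
\]
and the contribution to the $r$-integral over $U_n$ is dominated, via H\"older in $r$, by $\|\varphi'\|_{L^\infty} \, \|\sigma_n\|_{L^p(B(x_0,R))}$ times this vanishing supremum, hence tends to zero.

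The main obstacle is the bad-radii contribution on $[0,R] \setminus U_n$, where sphere-strong convergence of $v_n$ is lost. The resolution relies on the \emph{strictness} of \eqref{inepqN-1}: one selects an intermediate exponent $s$ with $p' < s < q^*_{N-1}$ and deploys the compact embedding $W^{1,q}(S_{N-1}) \hookrightarrow L^{s}(S_{N-1})$ to obtain a little extra integrability on spheres beyond $L^{p'}$; combined with the $L^p$-bound on $\sigma_n$ and the fact that $|[0,R] \setminus U_n|$ can be made arbitrarily small, an absolute-continuity-of-the-integral argument extinguishes the bad-radii piece. The cases $q=1$ and $p=1$ then follow the same scheme with $\eta_n$ (resp.\ $\sigma_n$) treated as a Radon measure: the reinforced $L^N$-compactness in \eqref{dvn1cwnp} (resp.\ \eqref{dvn1cwnp3}) keeps the Hodge decomposition (resp.\ integration by parts) legitimate, and the continuity hypotheses on $u(x_0+r\cdot)$ and $\varphi'$ in \eqref{concte3} are what is needed to pair $\sigma_n$ (now a measure) against the boundary term continuously.
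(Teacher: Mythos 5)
Your overall scheme (local potential for $\eta_n$, integration by parts against radial cutoffs, strong sphere-wise convergence of $v_n-v$ on a good set of radii supplied by Lemma~\ref{lemLpW1q}) is the same as the paper's, and the good-radii part of your argument is essentially the paper's estimate. The gap is in the bad-radii term, and it is not a technicality: in the regime $1+{1\over N}<{1\over p}+{1\over q}<1+{1\over N-1}$ the function $r\mapsto \|\sigma_n(x_0+r\cdot)\|_{L^p(S_{N-1})}\,\|(v_n-v)(x_0+r\cdot)\|_{L^{p'}(S_{N-1})}$ is only a product of a sequence bounded in $L^p(r^{N-1}dr)$ with one bounded in $L^q(r^{N-1}dr)$, and since ${1\over p}+{1\over q}>1$ this product need not be bounded in $L^1(dr)$, let alone equi-integrable. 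Choosing an intermediate exponent $p'<s<q^*_{N-1}$ gives extra integrability in the sphere variable $y$ but nothing in the radial variable $r$, so ``$|[0,R]\setminus U_n|$ small $+$ absolute continuity of the integral'' does not extinguish the bad-radii piece; indeed $\sigma_n^T v_n$ need not even belong to $L^1$ on the ball, so the integral $\int(\sigma_n\nabla\psi)\cdot(v_n-v)\,dx$ to which you reduce the problem is not a priori defined for an arbitrary radial $\psi$ (this is exactly the issue discussed in Remarks~\ref{rem1tp} and~\ref{rem.nregsinetan}).

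The deeper symptom is that your argument never genuinely uses the hypothesis \eqref{cvsinetanmu}: if the bad-radii term could be killed as you propose, one would obtain $\sigma_n:\eta_n\to\sigma:\eta$ in $\D'$ from \eqref{cvvnpwnq}--\eqref{dvnqcwnp} alone, which is false --- without the $W^{-1,1}$ weak convergence, dipole concentrations $\sum_j\div(c_j\delta_{x_j})$ can appear, as recalled in Remark~\ref{rem4tp}. The paper's device is precisely designed around this: instead of estimating the integral over the bad radii, it replaces $\varphi$ by the $n$-dependent function $\varphi_n(r)=\int_R^r\varphi'1_{U_n}\,ds$, so that the boundary term only ever involves good radii, and then controls the resulting discrepancy on the left-hand side, $\big\langle\sigma_n:\eta_n,\psi_n-\psi\big\rangle$, by writing $\sigma_n:\eta_n=\div f_n$ with $f_n\in L^1(\Om)^N$ and invoking the equi-integrability of the radial fluxes $h_n$ of Lemma~\ref{lem.wcW-11}; this is the one place where \eqref{cvsinetanmu} enters, and it cannot be dispensed with. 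To repair your proof you would need to incorporate this test-function truncation and the $W^{-1,1}$ equi-integrability step (and, for $p=1$, the Bourgain--Brezis argument for the term $\int\sigma_n:(\eta_n-Du_n)\psi_n$), after which it coincides with the paper's proof.
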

First of all, focus on the case $p,q>1$:
\begin{Rem} \label{rem1tp}
First of all, in view of the weak formulation~\eqref{concte1} note that
\[
\sigma:\big[\eta\psi-D(u\psi)\big]=\sigma:(\eta-Du)\,\psi-(\sigma\nabla\psi)\cdot u.
\]
Hence, since $\sigma:(\eta-Du)$ is in $L^1(\Om)$ by \eqref{eta-DuLp'}, the last integral term of \eqref{concte1} has a sense if and only if the integral term
\[
\int_{B(x_0,R)}(\sigma\nabla\psi)\cdot u\,dx,
\]
has a sense. This needs radial test functions $\psi$ and will be discussed in the general setting of Remark~\ref{rem.nregsinetan} below.
However, observe that the set of functions $\psi$ of the form
\[
\psi(x)=\sum_{i=1}^m c_i\,\varphi_i(|x-x_i|)
\]
such that for any $m\geq 1$ and $i\in\{1,\dots,m\}$, $c_i$ is a real constant, $x_i\in\Om$ and $\varphi_i\in W^{1,\infty}(0,\infty)$ with ${\rm supp}\,(\varphi_i)\subset [0,R_i]$, where $R_i>0$ and $B(x_i,R_i)\Subset\Om$, is dense in $W^{1,\infty}_0(\Om)$.
Therefore, the weak formulation \eqref{concte1} fully characterizes the distribution $\mu$.
\par
On the other hand, the existence of a function $u$ satisfying \eqref{eta-DuLp'} follows from the fact that $\Curl\,\eta$ belongs to $W^{-1,p'}(\Om)^{M\times N\times N}$ (see, {\em e.g.}, \cite{BCM}, Proposition~2.5). Note that \eqref{concte1} does not depend actually on the choice of the function $u$ satisfying \eqref{eta-DuLp'}. Indeed, let $u$ and $\tilde u$ be two functions satisfying \eqref{eta-DuLp'}. Since $u-\tilde u\in W^{1,q}(\Om)^N\cap W^{1,p'}_{\rm loc}(\Om)^N$, we have
\[
-\,{\rm Div}\,\sigma\cdot(u-\tilde u)-\si:D(u-\tilde u)+\div\left(\si^T(u-\tilde u)\right)=0\quad\mbox{in }\Om,
\]
which implies that the right-hand side of \eqref{concte1} is equal to zero with $u-\tilde u$ instead of $u$.
\end{Rem}

\begin{Rem} \label{rem4tp} It is clear that Theorem~\ref{thmdc} implies the classical div-curl result
of \cite{Mur2}, \cite{Tar2}, {\em i.e.} assuming that for $p\in (1,\infty)$,
\[
\left\{\ba{ll}
\sigma_n\rightharpoonup \sigma & \mbox{ in }L^p(\Om)^{M\times N}
\\ \ecart
\eta_n\rightharpoonup \eta & \mbox{ in }L^{p'}(\Om)^{M\times N},
\ea\right.
\qquad
\left\{\ba{ll}
\Div\,\sigma_n\to \Div\,\sigma & \mbox{ in }W^{-1,p}(\Om)^{M}
\\ \ecart
\Curl\,\eta_n\to \Curl\,\eta & \mbox{ in }W^{-1,p'}(\Om)^{M\times N\times N}.
\ea\right.
\]
then the following convergence holds true
\[
\sigma_n:\eta_n\stackrel{\ast}\rightharpoonup \sigma:\eta\quad\hbox{ in }\M(\Om).
\]
\par
We can also compare our result with the div-curl result of \cite{BCM} based on the convergences \eqref{cvvnpwnq} and \eqref{dvnqcwnp} together with condition
\beq\label{inepqN}
{1\over p}+{1\over q}\leq 1+{1\over N}.
\eeq
First, by Proposition~2.5 of \cite{BCM} there exists a matrix-valued function $\zeta$ such that
\beq\label{si-zeLpLq'}
\zeta\in L^p(\Om)^{M\times N},\quad {\rm Div}\,\zeta=0\;\;\mbox{in }\Om,\quad\sigma-\zeta\in L^{q'}_{\rm loc}(\Om)^{M\times N}.
\eeq
Then, in the case $p,q>1$ (but the other cases are similar), inequality \eqref{inepqN} combined with the Sobolev imbedding $W^{1,q}(\Om)\hookrightarrow L^{q^*_N}(\Om)$ implies that if the functions $u$ and $\zeta$ satisfy \eqref{eta-DuLp'} and \eqref{si-zeLpLq'}, then $\zeta^Tu$ is in $L^1_{\rm loc}(\Om)^N$. Therefore, using that $\zeta$ is divergence free, the limit formulation \eqref{concte1} can be written
\beq\label{wkmu}
\mu=\sigma:(\eta-Du)+(\sigma-\zeta):Du+{\rm div}\,(\zeta^Tu)\quad\mbox{in }\D'(\Om),
\eeq
which is the weak formulation for $\sigma:\eta$ according to Proposition 2.5 of \cite{BCM}.
However, Theorem~2.3 of \cite{BCM} shows for sequences $\sigma_n$ and $\eta_n$ satisfying \eqref{siLsetaLs'}, \eqref{cvvnpwnq}, \eqref{dvnqcwnp}, the existence of two sequences $x_j$ in $\Om$ and $c_j$ in $\RR^N$ such that
\[
\sigma_n:\eta_n\rightharpoonup\mu+\sum_{j=1}^\infty{\rm div}\,(c_j\,\delta_{x_j})\quad\hbox{ in }\D'(\Om).
\]
The reason of this apparent contradiction with equality \eqref{mu=sieta} is that in Theorem~\ref{thmdc} we have also assumed that $\sigma_n:\eta_n$ converges weakly in $W^{-1,1}(\Om)$, while in \cite{BCM} the convergence of $\sigma_n:\eta_n$ is obtained in the (larger) distributions space. It is easy to see that $\sigma_n:\eta_n$ in \cite{BCM} is actually the divergence of a sequence which converges only in the weak-$*$ sense of the measures.
\end{Rem}
\begin{Rem}\label{rem.nregsinetan}
In view of \eqref{eta-DuLp'} and \eqref{concte1} the regularity assumption \eqref{siLsetaLs'} for $\sigma_n$ and $\eta_n$, which holds in most situations, can be replaced in the case $p,q>1$ by the more general conditions:
\beq\label{sinetanW1}
\sigma_n:\eta_n\in W^{-1,1}(\Om),
\eeq
and similarly to \eqref{concte1}, for any $u_n\in W^{1,q}_{\rm loc}(\Om)^M$ satisfying $\eta_n-Du_n\in L^{p'}_{\rm loc}(\Om)^{M\times N}$, we have
\beq\label{concte2}
\left\{\ba{l}
\dis \forall\,B(x_0,R)\Subset\Om,\ \forall\,\varphi\in W^{1,\infty} (0,\infty),\mbox{ with }{\rm supp}\,\varphi\subset [0,R],
\\ \ecart
\ba{ll}
\dis \big\langle\sigma_n:\eta_n,\psi\big\rangle= &
\dis -\,\big\langle{\rm Div}\,\sigma_n, u_n\psi\big\rangle+\int_{B(x_0,R)}\sigma_n:\big[\eta_n\psi-D(u_n\psi)\big]\,dx
\\ \ecart
& \dis -\,\big\langle{\rm Div}\,\sigma_n, u_n\psi\big\rangle+\int_{B(x_0,R)}\big[\sigma_n:(\eta_n-Du_n)\,\psi-(\sigma_n\nabla\psi)\cdot u_n\big]\,dx,
\ea
\\ \ecart
\dis \hbox{where }\psi(x):=\varphi(|x-x_0|).
\ea\right.
\eeq
So the distribution $\sigma_n:\eta_n$ is defined by the formula \eqref{concte2}, and its extension to $W^{-1,1}(\Om)$ is required through condition \eqref{sinetanW1}.
\par
Then, we need to justify the integral term of \eqref{concte2}
\[
\int_{B(x_0,R)}(\sigma_n\nabla\psi)\cdot u_n\,dx.
\]
To this end, note that $u_n\in W^{1,q}_{\rm loc}(\Om)^M$ implies that
\[
\ba{lcll}
v_n: & (0,R)\times S_{N-1} & \to & \RR^M
\\[1.mm]
& (r,y) & \mapsto & u_n(x_0+ry)
\ea
\]
is in $L^q_{r^{N-1}dr}(0,R;W^{1,q}(S_{N-1}))^M$, and thus by Sobolev's imbedding, in $L^q_{r^{N-1}dr}(0,R;L^{p'}(S_{N-1}))^M$ due to \eqref{inepqN-1}. Hence, at least for $\varphi\in W^{1,\infty} (0,\infty)$, with ${\rm supp}\,\varphi\subset [0,R]$ and
\beq\label{condsov}
{\rm supp}\,(\varphi')\subset U_\la:=\left\{r\in (0,R):\int_{\partial B(x_0,r)}|u_n|^{p'}\,ds(x)\leq\lambda\right\}\quad\hbox{ for some }\lambda>0,
\eeq
we deduce that the right-hand side of \eqref{concte2} has a sense. But if \eqref{concte2} holds at least for $\varphi$ satisfying 
\eqref{condsov}, then using that $\sigma_n:\eta_n$ is in $W^{-1,1}(\Om)$ the function
\[
g_n:r\mapsto r^{N-1}\int_{S_{N-1}}\big(\si_n(x_0+ry)\,y\big)\cdot u_n(x_0+ry)\,ds(y)
\]
satisfies, by \eqref{concte2} together with the definition of $W^{-1,1}$, the equality
\[
\int_0^R\varphi'\,g_n\,dr=\int_0^R\varphi\,f_n\,dr+\int_0^R\varphi'\,h_n\,dr,\quad\mbox{where }f_n,h_n\in L^1(0,R),
\]
which implies that for any $\varphi\in W^{1,\infty} (0,\infty)$, with ${\rm supp}\,\varphi\subset [0,R]$,
\[
\int_0^R\varphi'\,1_{U_\la}\,g_n\,dr=\int_0^R\left(\int_0^r\varphi'\,1_{U_\la}\right)f_n\,dr
+\int_0^R\varphi'\,1_{U_\la}\,h_n\,dr.
\]
This combined with $|U_{\la}|\to R$ as $\la\to\infty$, allows us to conclude that $g_n$ is in $L^1(0,R)$.
Therefore, the weak formulation \eqref{concte2} is actually satisfied for any $\varphi\in W^{1,\infty} (0,\infty)$, with ${\rm supp}\,\varphi\subset [0,R]$.
\par
The same argument applies to the limit formulation \eqref{concte1}. Moreover, following the first argument of Remark~\ref{rem1tp} the weak formulation \eqref{concte1} fully characterizes the distribution $\mu$.
\end{Rem}
The case $q=1$ is similar to the case $p,q>1$. Now, focus on the case $p=1$ which is more delicate concerning the sense of the weak formulation \eqref{concte3}:
\begin{Rem}\label{rem.p=1}
Assume that $p=1$, and thus by \eqref{inepqN-1} $q>N-1$.
With respect to the first term in the right-hand side of \eqref{concte3}, since ${\rm Div}\,\sigma$ is in $W^{-1,q'}(\Om)^{M}$, there exists a matrix-valued Radon measure $\zeta$ satisfying (see \cite{BCM}, Proposition~2.5)
\beq\label{si-zeMLq'}
\zeta\in\M(\Om)^{M\times N},\quad {\rm Div}\,\zeta=0\;\;\mbox{in }\Om,\quad\sigma-\zeta\in L^{q'}_{\rm loc}(\Om)^{M\times N}.
\eeq
Thanks to a result due to Bourgain, Brezis \cite{BoBr}, the two first assertions of \eqref{si-zeMLq'} imply that the measure $\zeta$ is actually in $W^{-1,N'}_{\rm loc}(\Om)^{M\times N}$. Hence, it follows from \eqref{eta-DuW1N} that
\beq\label{si-zeDu}
\sigma:(\eta-Du)=(\sigma-\zeta):(\eta-Du)+\zeta:(\eta-Du)\in L^1_{\rm loc}(\Om)+W^{-1,1}_{\rm loc}(\Om),
\eeq
which yields a sense to the integral term
\[
\int_{B(x_0,R)}\sigma(dx):(\eta-Du)\,\psi.
\]
\par
With respect to the last term in the right-hand side of  \eqref{concte3}, observe that the function $v:(0,R)\times S_{N-1}\to \RR^M$ defined by $v(r,y):=u(x_0+ry)$ belongs to $L^q_{r^{N-1}dr}(0,R;W^{1,q}(S_{N-1}))^M$, and thus to $L^q_{r^{N-1}dr}(0,R;C^0(S_{N-1}))^M$ by Sobolev's imbedding due to $q>N-1$.
Then, by Lusin's theorem, for any $\ep>0$, there exists of a closed set $U$ satisfying the second line of \eqref{concte3} such that $|U|>R-\ep$.
For such a set $U$, the function $v$ is in $C^0(U\times S_{N-1})$ (again by Sobolev's imbedding) and $u$ is thus continuous on the closed set
\[
K:=\big\{x\in \bar\Om:x=x_0+ry,\ r\in U, \ y\in S_{N-1}\big\},
\]
so that $\nabla\psi\otimes u$ can be extended to a continuous function in $\bar{\Om}$.
Therefore, the last term of~\eqref{concte3}, or equivalently,
\[
\int_{B(x_0,R)}\big[\sigma:(\eta-Du)\,\psi-(\sigma\nabla\psi)\cdot u\big]\,dx,
\]
in which
\[
\int_{B(x_0,R)}(\sigma\nabla\psi)\cdot u\,dx=\int_K(\nabla\psi\otimes u):d\sigma,\quad\mbox{where }\psi(x):=\varphi_U(|x-x_0|),
\]
has a sense for any function $\varphi_U$ satisfying the two first lines of \eqref{concte3}.
Moreover, since $|U|$ can be chosen arbitrarily close to $R$, any $\varphi\in W^{1,\infty}(0,\infty)$, with ${\rm supp}\,\varphi\subset [0,R]$, can be approximated for the weak-$*$ topology of $W^{1,\infty}(0,\infty)$ by a sequence of functions
\[
\varphi_U(r):=\int_R^r\varphi'\,1_U\,ds,\quad\mbox{for }r\in[0,\infty).
\]
But it is not clear that the sole condition $\varphi\in W^{1,\infty}(0,\infty)$, with ${\rm supp}\,\varphi\subset [0,R]$, is sufficient.
\par
Finally, this combined with the first argument of Remark~\ref{rem1tp} implies that the weak formulation \eqref{concte3} fully characterizes the distribution $\mu$.
\end{Rem}
\subsection{Proof of Theorem~\ref{thmdc}}
The key ingredient of the proof of Theorem~\ref{thmdc} is the following compactness result:
\begin{Lem}\label{lemLpW1q}
Let $N\geq 2$, $0<R_0<R$, and $q\geq 1$. Consider a sequence $u_n$ in $W^{1,q}(C(R_0,R))^M$ such that
\beq\label{hlecnc1}
\left\{\ba{lll}
u_n\rightharpoonup u & \hbox{in }W^{1,q}\big(C(R_0,R)\big)^M, & \hbox{if }q>1
\\ \ecart
\dis u_n\stackrel{\ast}\rightharpoonup u & \hbox{in }BV\big(C(R_0,R)\big)^M, & \hbox{if }q=1.
\ea\right.
\eeq
Define $v_n,v\in L^q(R_0,R;W^{1,q}(S_{N-1}))^M$, or $v\in L^1(R_0,R;BV(S_{N-1}))^M$ if $q=1$, by
\beq\label{devn}
v_n(r,y):=u_n(ry),\quad v(r,y):=u(ry),\quad\hbox{ a.e. }(r,y)\in (R_0,R)\times S_{N-1},
\eeq
and the space $X$ of functions in $S_{N-1}$ by
\beq\label{deX}
X:=\left\{\ba{ll}\dis L^s(S_{N-1})^M,\;\;\hbox{with }1\leq s<q^*_{N-1}=\tex{\big({1\over q}-{1\over N-1}\big)^{-1}}, & \hbox{if }q<N-1
\\ \ecart
\dis L^s(S_{N-1})^M,\;\;\hbox{with }1\leq s<\infty, & \hbox{if }q=N-1
\\ \ecart
\dis C^0(S_{N-1})^M, & \hbox{if }q>N-1.
\ea\right.
\eeq
Moreover, for any $\lambda>0$ and any closed set $U$ of $[R_0,R]$ such that $v\in C^0(U; W^{1,q}(S_{N-1}))^M$ if $q>1$ or $v\in C^0(U; BV(S_{N-1}))^M$ if $q=1$, define the subset $U_n$ of $U$ by
\beq\label{l2pr2Un}
U_n:=\left\{r\in U:\int_{S_{N-1}}\big(|D u_n(ry)|^q+|D u(ry)|^q\big)\,ds(y)\leq\lambda\right\}.
\eeq
Then, we have
\beq\label{l1pr1Un} 
|U\setminus U_n|\leq{1\over \lambda\,R_0^{N-1}}\int_{\{|x|\in U\}}\big(|D u_n|^q+|D u|^q\big)\,dx,
\eeq
\beq\label{l2pr3Un}
\left\{\ba{lll}
\|v_n-v\|_{C^0(U_n;X)}\to 0, & & \hbox{if }q>1
\\ \ecart
\dis \|v_n-v\|_{L^s(U_n;X)}\to 0, & \forall\,s\in[1,\infty), & \hbox{if }q=1.
\ea\right.\eeq
\end{Lem}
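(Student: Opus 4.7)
The measure estimate \eqref{l1pr1Un} falls out of polar coordinates and Chebyshev: since
\[
\int_{\{|x|\in U\}}\!\bigl(|Du_n|^q+|Du|^q\bigr)\,dx \;=\; \int_U r^{N-1}\!\int_{S_{N-1}}\bigl(|Du_n(ry)|^q+|Du(ry)|^q\bigr)\,ds(y)\,dr,
\]
and the inner integral exceeds $\la$ on $U\setminus U_n$ while $r\geq R_0$ there, the bound follows.

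For \eqref{l2pr3Un} in the case $q>1$, I argue by contradiction: if the conclusion fails, after passing to a subsequence there exist $\de>0$ and $r_n\in U_n$ with $\|v_n(r_n,\cdot)-v(r_n,\cdot)\|_X\geq\de$. Since $U$ is a closed subset of $[R_0,R]$, one may also assume $r_n\to r_*\in U$. The continuity hypothesis on $v$ gives $v(r_n,\cdot)\to v(r_*,\cdot)$ in $W^{1,q}(S_{N-1})^M$, and the Sobolev embedding $W^{1,q}(S_{N-1})\hookrightarrow X$ (compact in each of the three ranges of $q$ appearing in \eqref{deX}) transfers this convergence to $X$.

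The crux is to show that $v_n(r_n,\cdot)$ itself is bounded in $W^{1,q}(S_{N-1})^M$, so that compactness of this embedding extracts a strong limit in $X$. Its tangential gradient is controlled directly by the definition of $U_n$: the chain rule gives $|\nabla_{S_{N-1}} v_n(r,y)|\leq r\,|Du_n(ry)|$, whence $\|\nabla_{S_{N-1}} v_n(r_n,\cdot)\|_{L^q(S_{N-1})}^q\leq R^q\la$. For the $L^q(S_{N-1})$-norm I would work through the spherical mean
\[
c_n(r):=|S_{N-1}|^{-1}\int_{S_{N-1}}v_n(r,y)\,ds(y),
\]
which, by differentiation in $r$ combined with H\"older and the uniform $W^{1,q}(C(R_0,R))^M$-bound on $u_n$, is bounded in $W^{1,q}(R_0,R)^M$. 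The one-dimensional embedding $W^{1,q}(R_0,R)\hookrightarrow C^0([R_0,R])$ then makes $c_n(r_n)$ uniformly bounded, and Poincar\'e's inequality on $S_{N-1}$ applied to the mean-zero $v_n(r_n,\cdot)-c_n(r_n)$ supplies the missing $L^q$ bound.

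Compactness now gives $v_n(r_n,\cdot)\to w$ in $X$ along a further subsequence. I would identify $w=v(r_*,\cdot)$ by testing against $\ph\in C^\infty(S_{N-1})^M$: the scalar function $F_n(r):=\int_{S_{N-1}}v_n(r,y)\cdot\ph(y)\,ds(y)$ is uniformly bounded in $W^{1,q}(R_0,R)$, and the Rellich-type strong $L^q$-convergence $v_n\to v$ on $(R_0,R)\times S_{N-1}$ (inherited from the weak convergence on the annulus) forces $F_n\to F$ uniformly on $[R_0,R]$ via Arzel\`a--Ascoli; evaluation at $r_n$ identifies every distributional test of $w$ with that of $v(r_*,\cdot)$. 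Together with the previous paragraph this contradicts $\|v_n(r_n,\cdot)-v(r_n,\cdot)\|_X\geq\de$. The $q=1$ case runs the same scheme with $BV$ replacing $W^{1,q}$; one no longer obtains uniform convergence of $F_n$ (only pointwise a.e., via a Helly-type argument), which is precisely what forces the weaker $L^s(U_n;X)$ form of \eqref{l2pr3Un}. The hardest point, I expect, is the $L^q(S_{N-1})$-bound on $v_n(r_n,\cdot)$: the set $U_n$ constrains only gradients, so pointwise information must be recovered through the spherical-mean detour above.
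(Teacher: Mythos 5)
Your treatment of \eqref{l1pr1Un} and of the case $q>1$ of \eqref{l2pr3Un} is correct, but it follows a genuinely different route from the paper. You argue by contradiction at selected radii $r_n\in U_n$: the constraint \eqref{l2pr2Un} bounds the tangential gradient of $v_n(r_n,\cdot)$ in $L^q(S_{N-1})$, the spherical mean $c_n$ (bounded in $W^{1,q}(R_0,R)^M\hookrightarrow C^0([R_0,R])^M$) together with Poincar\'e--Wirtinger on the sphere supplies the missing $L^q(S_{N-1})$ bound, and the compact embedding $W^{1,q}(S_{N-1})\hookrightarrow X$ plus the identification of the limit through the functionals $F_n$ (uniformly convergent because they are bounded in $W^{1,q}(R_0,R)$ with $q>1$ and converge in $L^q$ by Rellich on the annulus) yields the contradiction. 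The paper instead applies an Ehrling-type inequality (Lemma~5.1 of \cite{Lio}) attached to the compact embedding of $W^{1,q}(S_{N-1})$, resp. $BV(S_{N-1})$, into $X$, which on $U_n$ gives the quantitative bound \eqref{estLthzn}, and then invokes Simon's compactness theorem \cite{Sim} to get $v_n-v\to 0$ in $C^0([R_0,R];L^q(S_{N-1}))^M$ (or in $L^m(R_0,R;L^1(S_{N-1}))^M$ if $q=1$); letting $\delta\to 0$ concludes. The paper's route is shorter, treats both cases in one stroke, and bypasses your limit-identification step; in that step you should also state explicitly the elementary fact that $v$ has a representative in $C^0([R_0,R];L^q(S_{N-1}))^M$ (since $v,\partial_r v\in L^q$), because the a.e. identity $F(r)=\int_{S_{N-1}}v(r,\cdot)\cdot\varphi\,ds$ must be evaluated at $r_*\in U$ and $U$ may have isolated points or zero measure near $r_*$.

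The case $q=1$ is where your proposal has a genuine gap. A contradiction argument at single radii proves a sup-type statement, and your substitute --- Helly-type pointwise a.e. convergence of $F_n$ ``forcing'' the $L^s(U_n;X)$ form --- is not an argument: nothing in the scheme as written converts the failure of uniform convergence of $F_n$ into the asserted convergence $\|v_n-v\|_{L^s(U_n;X)}\to 0$. To repair it within your framework you need two further ingredients: (i) a uniform bound of $\|v_n(r,\cdot)-v(r,\cdot)\|_X$ for a.e. $r\in U_n$, which your mean/Poincar\'e device does provide with $W^{1,1}(S_{N-1})$ and the hypothesis $v\in C^0(U;BV(S_{N-1}))^M$; and (ii) convergence in measure on $U_n$ of $r\mapsto\|v_n(r,\cdot)-v(r,\cdot)\|_X$, which follows from $u_n\to u$ in $L^1(C(R_0,R))^M$ (compactness of $BV$) combined with an interpolation or Ehrling inequality on the sphere (small $L^1(S_{N-1})$ distance plus bounded total variation implies small $X$-norm, since $X=L^s(S_{N-1})^M$ with $s<1^*_{N-1}$, or $s<\infty$ if $N=2$); dominated convergence then yields \eqref{l2pr3Un} for every finite $s$. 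This missing ingredient is precisely the Ehrling-plus-compactness mechanism the paper uses, so for $q=1$ your sketch does not yet constitute a proof.
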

\noindent
\begin{proof}
Property \eqref{l1pr1Un} is an immediate consequence of the definition \eqref{l2pr2Un}  of $U_n$. Thus, we just need to prove \eqref{l2pr3Un}.
\par
On the one hand, since $W^{1,q}(S_{N-1})^M$ if $q>1$, or $BV(S_{N-1})^M$ if $q=1$, is compactly imbedded into $X$,
we deduce from Lemma~5.1 of \cite{Lio} that for any $\de>0$, there exists a constant $C_\de>0$ such that 
\[
\left\{\ba{lll}
\left\|w\right\|_{X}\leq C_\de\left\|w\right\|_{L^q(S_{N-1})^M}+\de\,\|D_\tau w\big\|_{L^q(S_{N-1})^{M\times N}},
& \forall\, w\in W^{1,q}(S_{N-1})^M, & \hbox{if }q>1,
\\ \ecart
\left\|w\right\|_{X}\leq C_\de\left\|w\right\|_{L^1(S_{N-1})^M}+\de\,\|D_\tau w\big\|_{\M(S_{N-1})^{M\times N}},
& \forall\, w\in BV(S_{N-1})^M, & \hbox{if }q=1,
\ea\right.
\]
where $D_\tau$ denotes the tangential derivative along the manifold $S_{N-1}$.
Applying these inequalities to $(v_n-v)(r,\cdot)$, and  taking into account the definition \eqref{l2pr2Un} of $U_n$,  we get 
\beq\label{estLthzn}
\left\{\ba{ll} \|v_n-v\|_{C^0(U_n;X)}\leq C_\delta\,\|v_n-v\|_{C^0(U_n;L^q(S_{N-1}))^M}+\delta\lambda^{1\over q}
& \mbox{if }q>1
\\ \ecart
\dis \|v_n-v\|_{L^{s}(U_n;X)}  \leq C_\de\,\|v_n-v\|_{L^{s}(U_n;L^1(S_{N-1}))^M}+\de\la\,|U_n|^{1\over s}
& \mbox{if }q=1.
\ea\right.
\eeq
\par
On the other hand, the sequence $v_n-v$ is bounded in $L^q(R_0,R;W^{1,q}(S_{N-1}))^M$ and the sequence $\partial_r\big(v_n-v\big)$ is bounded in $L^q(R_0,R;L^q(S_{N-1}))^M$ if $q>1$, or in $\M((R_0,R)\times S_{N-1})^M$ if $q=1$. Hence, by a compactness result due to Simon \cite{Sim} (Corollary~8 and Remark~10.1), the sequence $v_n-v$ converges strongly to $0$ in
\beq\label{compS}
\left\{\ba{lll}
C^0\big([R_0,R];L^q(S_{N-1})\big)^M, & & \mbox{if }q>1
\\ \ecart
L^m\big([R_0,R];L^1(S_{N-1})\big)^M, & \forall\,m\in[1,\infty), & \mbox{if }q=1,
\ea\right.
\eeq
which combined with \eqref{estLthzn} yields
\[
\left\{\ba{lll}
\dis \limsup_{n\to\infty}\|v_n-v\|_{C^0(U_n;X)}\leq \delta\lambda^{1\over q}, & \mbox{if }q>1
\\ \ecart
\dis \limsup_{n\to\infty}\|v_n-v\|_{L^{s}(U_n;X)}  \leq \de\la\,R^{1\over s}, & \mbox{if }q=1.
\ea\right.
\]
Finally, the arbitrariness of $\de>0$ leads to \eqref{l1pr1Un}.
\end{proof}
\par
Let us start by the following preliminary remark which illuminates in particular the strategy of the proof of Theorem~\ref{thmdc}.
\begin{Rem}\label{rem3tp}
To fix ideas, assume that $p,q>1$ with \eqref{inepqN-1} (the other cases are similar). As observed in Remark~\ref{rem1tp}, for $\sigma_n\in L^p(\Om)^{M\times N}$ and $\eta_n\in L^q(\Om)^{M\times N}$ such that ${\rm Curl}\,\eta_n$ is in $W^{-1,p'}(\Om)^{M\times N\times N}$, there exists $u_n\in W^{1,q}(\Om)^M$ such that $\eta_n-Du_n\in L^{p'}_{\rm loc}(\Om)^{M\times N}$.
\par
Then, for any $B(x_0,R)\Subset\Om$ and for any $\varphi\in W^{1,\infty}(0,\infty)$ with 
\[
{\rm supp}\,\varphi\subset [0,R],\quad
{\rm supp}\,(\varphi')\subset \left\{r\in [0,R]:\int_{\partial B(x_0,r)}|\nabla u_n|^q\,ds(y)\leq \lambda\right\}\;\;\hbox{for some }\lambda>0,
\]
the integral 
\[
\into\sigma_n:\big[\eta_n\psi-D(u_n\psi)\big]=\into\big[\sigma_n:(\eta_n-Du_n)\,\psi-(\sigma_n\nabla\psi)\cdot u_n\big]\,dx,
\]
where $\psi(x):=\varphi(|x-x_0|)$, is well defined.
Defining $V_n$ as the vector-space spanned by these functions $\psi$, we can then define the linear mapping $F_n:V_n\to\RR$ by 
\beq\label{deffFn}
F_n\psi:=\into\sigma_n:\big[\eta_n\psi-D(u_n\psi)\big].
\eeq
The proof of Theorem~\ref{thmdc} essentially consists in constructing sequences $\psi_n$ in $V_n$ converging to a function $\psi$ in $W^{1,\infty}(\Om)$ weak-$\ast$ such that
\[
F_n\psi_n\to F\psi.
\]
But this does not prove the convergence of $F_n$ to $F$ in any topology because the spaces $V_n$ vary with $n$. This is the reason to make assumption \eqref{cvsinetanmu} in Theorem~\ref{thmdc}.
However, this assumption can be simplified. Indeed, instead of assuming
$\sigma_n:Du_n \in W^{-1,1}(\Om)$, we can assume that
\beq\label{hipfn}
F_n\hbox{ defined by }\eqref{deffFn} \hbox{ can be extended  to an element of }W^{-1,1}(\Om),
\eeq 
 which holds true for example if $\sigma_n^Tu_n$ is in $L^1(\Om)^N$, and then to define $\sigma_n:\eta_n$ in a relaxed way by the equality 
 \beq\label{dersiet}
 \sigma_n:\eta_n:=F_n.
 \eeq
Note that for $\sigma_n$, $\eta_n$ smooth enough this equality holds, but $F_n$ does not necessarily agree with the measurable function $\sigma_n:\eta_n$ which in general is not even in $L^1(\Om)$. Then, also assuming
\beq\label{frdr}
\sigma_n:\eta_n\rightharpoonup \mu\quad\hbox{in }W^{-1,1}(\Om),
\eeq
Theorem~\ref{thmdc} shows that $\mu=\sigma:\eta$, where $\sigma:\eta$ is defined in a relaxed way similarly to $\sigma_n:\eta_n$.
\end{Rem}
The proof of Theorem~\ref{thmdc} will use the following  equi-integrability result for  weakly convergent sequences in $W^{-1,1}(\Om)$ and radial test functions:
\begin{Lem}\label{lem.wcW-11}
Let $x_0\in\Om$ and $R>0$ be such that $B(x_0,R)\subset\Om$. Consider a sequence $f_n$ in $ L^1(\Om)^N$ and a function $f$ in $L^1(\Om)^N$ such that $\div f_n$ converges weakly to $\div f$ in $W^{-1,1}(\Om)$, and define $h_n$ in $(0,R)$ by
\beq\label{hn}
h_n(r):=\int_{\partial B(x_0,r)}f_n\cdot{x-x_0\over|x-x_0|}\,ds,\quad\mbox{for }r\in (0,R).
\eeq
Then, the sequence $h_n$ is bounded and equi-integrable in $L^1(0,R)$.
\end{Lem}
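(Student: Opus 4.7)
The plan is to recognize that the conclusion is really a statement that $h_n$ converges weakly in $L^1(0,R)$ to the analogous flux $h$ associated with $f$, since by Dunford--Pettis weak $L^1$ convergence of a sequence is equivalent to that sequence being bounded and equi-integrable. So the whole proof reduces to testing the weak $W^{-1,1}$ convergence of $\div f_n$ against radial cut-off functions.

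More precisely, I would proceed as follows. Given any $\chi \in L^\infty(0,R)$, define
\[
\varphi(r) := -\int_r^R \chi(s)\,ds \quad \text{for } r\in[0,R], \qquad \varphi(r):=0 \quad \text{for } r>R.
\]
Then $\varphi\in W^{1,\infty}(0,\infty)$, $\varphi$ is continuous at $R$ with $\varphi(R)=0$, $\operatorname{supp}\varphi\subset[0,R]$, and $\varphi'=\chi$ a.e. on $(0,R)$. The radial function $\psi(x):=\varphi(|x-x_0|)$ is then Lipschitz on $\RR^N$, vanishes outside $\bar B(x_0,R)\subset\Om$, and hence belongs to $W^{1,\infty}_0(\Om)$ (it vanishes on $\partial\Om$ after extension by zero). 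Since $f_n\in L^1(\Om)^N$, Fubini together with the polar-coordinate formula $dx=r^{N-1}\,dr\,ds(y)$ gives
\[
\int_\Om f_n\cdot\nabla\psi\,dx
= \int_0^R \varphi'(r)\,h_n(r)\,dr
= \int_0^R \chi(r)\,h_n(r)\,dr,
\]
so that
\[
\int_0^R \chi(r)\,h_n(r)\,dr = -\,\langle\div f_n,\psi\rangle.
\]
The same computation with $f$ in place of $f_n$ shows that $h\in L^1(0,R)$ (here $h$ denotes the analog of $h_n$ for $f$) and that $\int_0^R\chi\,h\,dr=-\langle\div f,\psi\rangle$.

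Now I would invoke the hypothesis. Since $\div f_n\rightharpoonup\div f$ in $W^{-1,1}(\Om)$ and $\psi\in W^{1,\infty}_0(\Om)=\big(W^{-1,1}(\Om)\big)'$, we get
\[
\int_0^R \chi(r)\,h_n(r)\,dr \;\longrightarrow\; \int_0^R \chi(r)\,h(r)\,dr, \qquad \forall\,\chi\in L^\infty(0,R).
\]
Since $L^\infty(0,R)=(L^1(0,R))'$, this is precisely the statement that $h_n\rightharpoonup h$ weakly in $L^1(0,R)$.

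To finish, I would appeal to the Dunford--Pettis theorem: a weakly convergent sequence in $L^1(0,R)$ is automatically bounded and equi-integrable (its closure is weakly compact, and Dunford--Pettis characterises weak relative compactness in $L^1$ as boundedness plus uniform integrability). This is exactly the conclusion of the lemma. There is no real obstacle in this argument; the only points requiring a little care are verifying that the radial cut-off $\psi$ built from an arbitrary $L^\infty$ function $\chi$ does lie in $W^{1,\infty}_0(\Om)$ (which relies on $B(x_0,R)\Subset\Om$ and on $\varphi(R)=0$ by construction), and checking that the individual flux $h_n$ is a well-defined element of $L^1(0,R)$, which follows from $f_n\in L^1(\Om)$ via Fubini applied to $|f_n|$ in polar coordinates.
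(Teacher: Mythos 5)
Your proposal is correct and follows essentially the same route as the paper: you reduce the claim to the weak $L^1(0,R)$ convergence $h_n\rightharpoonup h$ by testing $\div f_n\rightharpoonup\div f$ in $W^{-1,1}(\Om)$ against radial test functions $\psi(x)=\varphi(|x-x_0|)$ with $\varphi$ a primitive of an arbitrary $\chi\in L^\infty(0,R)$ vanishing at $R$, exactly as in the paper's proof (up to a sign convention in the definition of $\varphi$). The only difference is cosmetic: you spell out the Dunford--Pettis step and the polar-coordinate/Fubini justification that the paper leaves implicit.
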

\begin{proof}
It is equivalent to prove that  $h_n$ converges weakly in $L^1(0,R)$. For this purpose, consider $\phi\in L^\infty(0,R)$, and define $\varphi\in W^{1,\infty}(0,R)$ with $\varphi(R)=0$, by
\[
\varphi(r)=\int_r^R \phi(t)\,dt\quad\mbox{for }r\in[0,R].
\]
Then, we have
\[
\ba{l}\dis\int_0^Rh_n\phi\,dr=-\int_{B(x_0,R)}f_n \cdot{x-x_0\over|x-x_0|}\,\varphi'(|x-x_0|)\,dx=
-\int_{B(x_0,R)}f_n \cdot\nabla \big[\varphi(|x-x_0|)\big]dx
\\ \ecart
\dis =\big\langle\div f_n,\ph(|x-x_0|)\big\rangle\limi \big\langle\div f,\ph(|x-x_0|)\big\rangle=\int_0^R h\phi\,dr,
\ea
\]
where $h\in L^1(0,R)$ is defined replacing $f_n$ by $f$ in formula (\ref{hn}). Therefore, $h_n$ converges weakly to $h$ in $L^1(0,R)$.
\end{proof}
\par\noindent
{\bf Proof of Theorem~\ref{thmdc}.}
First of all, if $\sigma$ and $\eta$ satisfy the regularity assumption \eqref{siLsetaLs'}, then the weak formulations \eqref{concte1} and \eqref{concte3} are reduced to $\mu=\sigma:\eta$. Indeed, in this case any function $u$ satisfying \eqref{eta-DuLp'} or \eqref{eta-DuW1N} is in $W^{1,s'}(\Om)^N$, so that
\[
\div\,(\sigma^T u)=\Div\,(\sigma)\cdot u+\sigma:Du.
\]
A simple integration by parts in \eqref{concte1} and \eqref{concte3} then yields $\mu=\sigma:\eta$. 
\par
Let us now treat the general case.
From Proposition~2.5 of \cite{BCM} we deduce the existence of functions $u_n,u$ in $W^{1,q}(\Om)^N$ satisfying
\beq\label{cvun}
u_n\rightharpoonup u\quad\hbox{in }
\left\{\ba{ll}
W^{1,q}(\Om)^M, & \mbox{if }q>1
\\ \ecart
BV(\Om)^M, & \mbox{if }q=1,
\ea\right.
\eeq
\beq\label{cvetan-Dun}
\eta_n-Du_n\to\eta-Du\quad\hbox{strongly in }
\left\{\ba{ll}
L^{p'}_{\rm loc}(\Om)^{M\times N}, & \hbox{if }p>1
\\ \ecart
W^{1,N}_{\rm loc}(\Om)^{M\times N}, & \hbox{if }p=1.
\ea\right.
\eeq
\par
Let be a closed ball of radius $R>0$ contained in $\Om$. Up to a translation we may assume the ball is centered at the origin.
Define $v_n,v:(0,R)\times S_{N-1}\to\RR^M$ by \eqref{devn}. 
For $R_0\in (0,R)$ and for a closed set $U$ of $[R_0,R]$ such that $v\in C^0(U;W^{1,q}(S_{N-1}))^M$, take a function $\varphi\in W^{1,\infty}(0,\infty)$ with ${\rm supp}\,\varphi\subset [0,R]$, ${\rm supp}\,(\varphi')\subset U$.
Then, for a fixed $\lambda>0$, consider the set $U_n$ defined by \eqref{l2pr2Un}
and define the function $\varphi_n\in W^{1,\infty}(0,\infty)$ by
\[
\varphi_n(r):=\int_R^r \varphi'\,1_{U_n}\,ds,\quad\mbox{for }r\in [0,\infty).
\]
Also define the functions $\psi_n,\psi\in W^{1,\infty}_0(\Om)$ by
\[
\psi_n(x):=\varphi_n(|x|),\quad \psi(x):=\varphi(|x|),\quad\mbox{for }x\in \Om.
\]
\par
According to Remark~\ref{rem3tp} our aim is to pass to the limit in  $\big\langle \sigma_n:\eta_n,\psi_n\big\rangle$.
We distinguish three cases:
\par\medskip\noindent
$\bullet$ {\it The case $p,q> 1$.} Using assumption \eqref{siLsetaLs'} or the more general \eqref{concte2}, combined with the first convergences of \eqref{dvnqcwnp} and \eqref{cvetan-Dun}, we have
\beq\label{dthp4}
\ba{l}
\dis \big\langle \sigma_n:\eta_n,\psi_n\big\rangle
\\ \ecart
\dis =-\,\big\langle{\rm Div}\,\sigma_n, u_n\psi_n\big\rangle
+\into\sigma_n:(\eta_n-Du_n)\,\psi_n\,dx-\int_{\{|x|\in U_n\}}(\sigma_n\nabla\psi_n)\cdot u_n\,dx
\\ \ecart
\dis =-\,\big\langle{\rm Div}\,\sigma, u\psi\big\rangle
+\into\sigma:(\eta-Du)\,\psi\,dx-\int_{\{|x|\in U_n\}}(\sigma_n\nabla\psi_n)\cdot u_n\,dx+o(1).
\ea
\eeq
On the one hand, to pass to the limit in the left-hand side of \eqref{dthp4} we use  the decomposition
\[
\big\langle\sigma_n:\eta_n,\psi_n\big\rangle=\big\langle\sigma_n:\eta_n,\psi\big\rangle+\big\langle\sigma_n:\eta_n,\psi_n-\psi\big\rangle 
\]
where  the first term converges clearly to $\langle\mu,\psi\rangle$ by \eqref{cvsinetanmu}.
For the second term, by \eqref{cvsinetanmu} there exist functions $f_n\in L^1(\Om)^N$ satisfying $\div f_n=\sigma_n:\eta_n$.
Thus, we have
\[
\big|\big\langle\sigma_n:\eta_n,\psi_n-\psi\big\rangle\big|=\left|\,\into f_n\cdot\nabla (\psi_n-\psi)\,dx\,\right|\leq C\int_{U\setminus U_n}|h_n|\,dr,
\]
where $h_n$ is defined by (\ref{hn}). Hence, by \eqref{l1pr1Un} we get that
\beq\label{dthp5b}
\limsup_{n\to\infty}\left|\,\big\langle\sigma_n:\eta_n,\psi_n\big\rangle-\langle\mu,\psi\rangle\,\right|
\leq C\sup_{m\in\NN\atop |B|\leq c/\lambda}\int_B|h_m|\,dr.
\eeq
\par
On the other hand, for the last term in \eqref{dthp4}, consider the functions $v_n,v$ of \eqref{devn} and define the functions $\xi_n,\xi:(0,R)\times S_{N-1}\to\RR^{M\times N} $ by
\[
\xi_n(r,y):=\sigma_n(ry),\quad \xi(r,y):=\sigma(ry),\quad\hbox{ a.e. }(r,y)\in (0,R)\times S_{N-1}.
\]
Then, we have
\beq\label{sinxin}
\ba{l}\dis\int_{\{|x|\in U_n\}}(\sigma_n\nabla\psi)\cdot u_n\,dx=\int_{U_n}\varphi'(r)\,r^{N-1}\int_{S_{N-1}}(\xi_ny)\cdot (v_n-v)\,ds(y)\,dr
\\ \ecart
\dis +\int_{U}\varphi'(r)\,r^{N-1}\int_{S_{N-1}}(\xi_ny)\cdot v\,ds(y)\,dr
-\int_{U\setminus U_n}\varphi'(r)\,r^{N-1}\int_{S_{N-1}}(\xi_ny)\cdot v\,ds(y)\,dr.
\ea
\eeq
Since $\xi_n$ is bounded in $L^p(R_0,R;L^p(S_{N-1}))^{M\times N}$ and $v_n$ satisfies the first convergence of \eqref{l2pr3Un} with $X:=L^{p'}(S_{N-1})$ and $p'<q^*_{N-1}$ by \eqref{inepqN-1}, the first term in the right-hand side of \eqref{sinxin} tends to zero.
Moreover, since $\xi_n$ converges weakly to $\xi$ in $L^p(U;L^p(S_{N-1}))^{M\times N}$ and $v$ is in $C^0(U;L^{p'}(S_{N-1}))^M$ by Sobolev's imbedding combined with $p'<q^*_{N-1}$, we have
\[
\ba{ll}
\dis \int_{U}\varphi'(r)\,r^{N-1}\int_{S_{N-1}}(\xi_ny)\cdot v\,ds(y)\,dr
& \dis \to \int_{U}\varphi'(r)\,r^{N-1}\int_{S_{N-1}}(\xi y)\cdot v\,ds(y)\,dr
\\ \ecart
& \dis =\into(\sigma\nabla\psi)\cdot u\,dx.
\ea
\]
The last term of \eqref{sinxin} can be estimated thanks to H\"older's inequality by
\[
\ba{l}
\dis \left|\,\int_{U\setminus U_n}\varphi'(r)\,r^{N-1}\int_{S_{N-1}}(\xi_ny)\cdot v\,ds(y)\,dr\,\right|
\\ \ecart
\dis \leq R^{N-1}\,|U\setminus U_n|^{1\over p'}\left\|\varphi'\right\|_{L^\infty(U)}\|\xi_n\|_{L^p(U;L^p(S_{N-1}))^{M\times N}}\,\|v\|_{C^0(U;L^{p'}(S_{N-1}))^M},
\ea
\]
hence by \eqref{l1pr1Un}
\beq\label{dthp8}
\left|\,\int_{U\setminus U_n}\varphi'(r)\,r^{N-1}\int_{S_{N-1}}(\xi_ny)\cdot v\,ds(y)\,dr\,\right|\leq{C\over \lambda^{1\over p'}}.
\eeq
Finally, combining \eqref{dthp4}, \eqref{dthp5b}, \eqref{dthp8} we obtain
\[
\ba{l}
\dis \left|\,\langle\mu,\psi\rangle+\big\langle{\rm Div}\,\sigma, u\psi\big\rangle-\into\sigma:\big[\eta\psi-D(u\psi)\,dx\,\right|
\\ \ecart
\dis =\left|\,\langle\mu,\psi\rangle+\big\langle{\rm Div}\,\sigma, u\psi\big\rangle-\into\sigma:(\eta-Du)\,\psi\,dx+\into(\sigma\nabla\psi)\cdot u\,dx\,\right|
\\ \ecart
\dis \leq C\left(\sup_{m\in\NN\atop |B|\leq c/\lambda}\int_B|h_m|\,dr+{1\over \lambda^{1\over p'}}\right).
\ea
\]
Taking into account the equi-integrability of $h_m$ given by Lemma \ref{lem.wcW-11}
and the arbitrariness of $\lambda>0$, we have just proved that the function $u$ defined by \eqref{cvetan-Dun} satisfies \eqref{concte1} for any $\varphi\in W^{1,\infty}(0,\infty)$ with ${\rm supp}\,\varphi\subset [0,R]$, and ${\rm supp}\,(\ph')$ contained in a closed set $U$ of $[R_0,R]$ such that $v$ belongs to $C^0(U;W^{1,q}(S_{N-1}))^M$.
\par
Finally, by Lusin's theorem the closed set $U$ of $(0,R]$ can be chosen such that $R-|U|$ is arbitrary small. Hence, any function $\varphi\in W^{1,\infty}(0,\infty)$, with ${\rm supp}\,\varphi\subset[0,R]$, can be approximated for the weak-$*$ topology of $W^{1,\infty}(0,\infty)$ by a sequence of functions
\[
\varphi_U(r):=\int_R^r \varphi'\,1_U\,ds,\quad\mbox{for }r\in[0,\infty),
\]
which satisfy ${\rm supp}\,(\varphi_U)\subset[0,R]$ and ${\rm supp}\,(\varphi'_U)\subset U$. This combined with the density argument of Remark~\ref{rem.nregsinetan} (based on the fact that $\mu\in W^{-1,1}(\Om)$) shows that the weak formulation \eqref{concte1} holds actually for any $\varphi\in W^{1,\infty}(0,\infty)$, with ${\rm supp}\,\varphi\subset[0,R]$.
This concludes the proof of Theorem~\ref{thmdc} in the case $p,q>1$.
\par\medskip\noindent
$\bullet$ {\it The case $q=1$.} It is similar to the previous case using the first convergence of \eqref{cvetan-Dun}, and the second convergence of~\eqref{cvun} combined with Sobolev's imbedding $BV(\Om)^M\hookrightarrow L^{N'}(\Om)^M$.
\par\medskip\noindent
$\bullet$ {\it The case $p=1$.} It is also similar to the first case. The only delicate point comes from the second term in the right-hand side of \eqref{dthp4}.
In view of \eqref{si-zeMLq'} and \eqref{si-zeDu} we can write
\beq\label{sinzenDun}
\into\sigma_n:(\eta_n-Du_n)\,\psi_n\,dx=\into(\sigma_n-\zeta_n):(\eta_n-Du_n)\,\psi_n\,dx+\into\zeta_n:(\eta_n-Du_n)\,\psi_n\,dx,
\eeq
where by virtue of Proposition~2.5 of \cite{BCM} the measures $\zeta_n,\zeta$ satisfy
\beq\label{cvsinzen}
\zeta_n\rightharpoonup\zeta\quad\hbox{in }\M(\Om)^{M\times N},\quad{\rm Div}\,\zeta_n=0\;\;\mbox{in }\Om,
\quad\sigma_n-\zeta_n\to\sigma-\zeta\;\;\mbox{strongly in }L^{q'}_{\rm loc}(\Om)^{M\times N}.
\eeq
By the second convergence of \eqref{cvetan-Dun} and \eqref{cvsinzen} the first term in the right-hand side of \eqref{sinzenDun} clearly converges. Moreover, we can also pass to the limit in the second term of the right-hand side of \eqref{sinzenDun}, since the divergence free sequence $\zeta_n$ converges weakly in $W^{-1,N'}(\Om)^{M\times N}$ thanks to the Bourgain, Brezis result \cite{BoBr}, hence
\[
\into\sigma_n:(\eta_n-Du_n)\,\psi_n\,dx\to\into\sigma:(\eta-Du)\,\psi\,dx.
\]
Therefore, the proof of Theorem~\ref{thmdc} is complete.
\cqfd
\subsection{The limit case: ${1\over p}+{1\over q}=1+{1\over N-1}$}
When inequality \eqref{inepqN-1} becomes an equality, the imbedding $W^{1,q}(S_{N-1})\hookrightarrow L^{p'}(S_{N-1})$ is no more compact, so Lemma~\ref{lemLpW1q} is useless. This lack of compactness can be overcome adding an equi-integrability assumption for the sequence $\eta_n$ in Theorem~\ref{thmdc}. This is the aim of Theorem~\ref{calipn1} in the case $p>1$.
\par
The case $p=1$, and thus $q=N-1$, corresponds to the critical case for Sobolev's inequality: $W^{1,N-1}(S_{N-1})$ is continuously imbedded in $L^s(S_{N-1})$ for any $s\in [1,\infty)$, but if $N>2$, it is not imbedded in $L^\infty(S_{N-1})$. To get over this difficulty we need to work with a space which is a little more regular than $L^{N-1}(\Om)$.
So, in Theorem~\ref{calipi1} below $L^{N-1}(\Om)$ is replaced by the Lorentz space $L^{N-1,1}(\Om)$. It is known that the space of functions $u\in W^{1,N-1}(S_{N-1})$ the gradient of which belongs to $L^{N-1,1}(S_{N-1})$ is continuously imbedded in $C^0(S_{N-1})$ (see, {\em e.g.}, \cite{Tar3}, Chap.~31). Moreover, for $N=2$, $L^{1,1}(S_1)$ agrees with $L^1(S_1)$, so that we can extend Theorem~\ref{thmdc} to the case $N=2$, $p=q=1$.
\begin{Thm}\label{calipn1}
Let $\Om$ be a bounded open set of $\RR^N$, with $N\geq 2$, and let $p,q$ be such that
\beq\label{pqN}
1<p\leq N-1, \quad 1\leq q<N-1,\quad {1\over p}+{1\over q}=1+{1\over N-1}.
\eeq
Consider two sequences of matrix-valued functions $\sigma_n\in L^p(\Om)^{M\times N}$, $\eta_n\in L^q(\Om)^{M\times N}$,
satisfying  \eqref{siLsetaLs'}, \eqref{cvsinetanmu}, \eqref{cvvnpwnq}, \eqref{dvnqcwnp} together with
\beq\label{equico}
|\eta_n|^p\hbox{ equi-integrable in }L^1(\Om).
\eeq
Then the weak formulation \eqref{concte1} holds true.
\end{Thm}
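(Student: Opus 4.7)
The plan is to adapt the proof of Theorem~\ref{thmdc} in the case $p, q > 1$, with the only modification being to replace the use of Lemma~\ref{lemLpW1q} by its announced critical-case version Lemma~\ref{le1cc}. The latter is tailored to the equi-integrability hypothesis \eqref{equico}, and it restores the strong $L^{p'}$-convergence of spherical traces that would otherwise fail because the embedding $W^{1,q}(S_{N-1}) \hookrightarrow L^{p'}(S_{N-1})$ ceases to be compact when $p' = q^*_{N-1}$.

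First, construct $u_n, u \in W^{1,q}(\Om)^M$ satisfying \eqref{cvun} and \eqref{cvetan-Dun} as in the proof of Theorem~\ref{thmdc}. On a ball $B_R \Subset \Om$ centered at the origin, and for a closed set $U \subset [R_0, R]$ along which $v(r, y) := u(ry)$ belongs to $C^0(U; W^{1,q}(S_{N-1}))^M$, define $U_n$ by \eqref{l2pr2Un}. Build the radial test functions $\psi_n(x) := \varphi_n(|x|)$ with $\varphi_n(r) := \int_R^r \varphi' \, 1_{U_n} \, ds$ exactly as before. Identity \eqref{dthp4} remains valid, and each of its terms is passed to the limit as in Theorem~\ref{thmdc}, except for the crucial cross term
\[
I_n := \int_{U_n} \varphi'(r) \, r^{N-1} \int_{S_{N-1}} \bigl(\xi_n(r, y) \cdot y\bigr) (v_n - v)(r, y) \, ds(y) \, dr,
\]
where $\xi_n(r, y) := \sigma_n(ry)$ and $v_n(r, y) := u_n(ry)$.

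The main obstacle is thus showing $I_n \to 0$. In the sub-critical case of Theorem~\ref{thmdc}, this relied on $v_n \to v$ in $C^0(U_n; L^{p'}(S_{N-1}))$ via the compactness of $W^{1,q}(S_{N-1}) \hookrightarrow L^{p'}(S_{N-1})$, which no longer holds here. The substitute is Lemma~\ref{le1cc}: the equi-integrability \eqref{equico}, combined with the strong convergence \eqref{cvetan-Dun} of $\eta_n - Du_n$ in $L^{p'}_{\rm loc}(\Om)$, transfers --- after H\"older interpolation, using that $q \leq \max(p, p')$ --- to the equi-integrability of $|D_\tau v_n|^q$ against the measure $r^{N-1} dr \, ds$ on $(R_0, R) \times S_{N-1}$. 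A concentration-compactness-style argument then rules out the only obstruction to compactness in the critical embedding and yields $v_n \to v$ strongly in $L^{p'}(U_n \times S_{N-1}; r^{N-1} dr \, ds)$. Combined with the boundedness of $\xi_n$ in $L^p((R_0, R) \times S_{N-1}; r^{N-1} dr \, ds)$, H\"older's inequality gives $I_n \to 0$.

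Once $I_n \to 0$ is established, the remainder of the argument is identical to that of Theorem~\ref{thmdc}: the equi-integrability of $h_n$ from Lemma~\ref{lem.wcW-11}, combined with estimate \eqref{l1pr1Un}, controls the discrepancy $\big\langle \sigma_n : \eta_n, \psi_n - \psi \big\rangle$, and the Lusin-type density argument together with Remark~\ref{rem.nregsinetan} extends the resulting weak formulation \eqref{concte1} to all radial test functions $\varphi \in W^{1,\infty}(0, \infty)$ with ${\rm supp}\,\varphi \subset [0, R]$.
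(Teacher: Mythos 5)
Your overall route is the paper's own: the proof of Theorem~\ref{calipn1} there consists precisely in rerunning the proof of Theorem~\ref{thmdc} for $p,q>1$, with Lemma~\ref{le1cc} substituted for Lemma~\ref{lemLpW1q}, and your handling of the remaining terms (the equi-integrability of $h_n$ from Lemma~\ref{lem.wcW-11}, the Lusin/density step) is the same. At that level the plan is right, but two steps do not hold as written.

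First, you keep the set $U_n$ defined by \eqref{l2pr2Un} and then invoke for it the strong trace convergence. At the critical exponent $p'=q^*_{N-1}$ this is exactly what fails: the bound $\int_{S_{N-1}}|\nabla u_n(ry)|^q\,ds\le\lambda$ does not prevent $v_n(r,\cdot)$ from bubbling on shrinking spherical caps, so neither the $C^0(U_n;L^{p'}(S_{N-1}))$ convergence of Theorem~\ref{thmdc} nor \eqref{pr3Un} is available for that set. Lemma~\ref{le1cc} constructs a different, smaller $U_n\subset U$: besides the radii where the total spherical energy exceeds $\lambda$, it deletes the radii (the sets $E_{n,k}$ in its proof) where $|\nabla u_n|^q+|\nabla u|^q$ puts mass at least $\ep/2^k$ on some cap of radius $h_k$; the equi-integrability enters there, through a measurable selection and the scale-invariant Sobolev--Wirtinger inequality on caps, and the measure estimate becomes \eqref{pr1Un} with a harmless extra $\ep$. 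Your sentence about a ``concentration-compactness-style argument ruling out the only obstruction'' is a description of this lemma, not an argument; citing the lemma is legitimate, but then the annuli you integrate over must be the lemma's $U_n$, not \eqref{l2pr2Un}.

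Second, the verification of the lemma's hypothesis is justified incorrectly. Lemma~\ref{le1cc} requires $|\nabla u_n|^q$ to be equi-integrable. Writing $Du_n=\eta_n-(\eta_n-Du_n)$, the corrector part is fine because $q<p'$ and \eqref{cvetan-Dun} is a strong $L^{p'}_{\rm loc}$ convergence; but for $\eta_n$ itself one needs $|\eta_n|^q$ equi-integrable, and this follows from \eqref{equico} by H\"older only when $q\le p$. The criterion ``$q\le\max(p,p')$'' does not do the job, since $p'$ plays no role for $\eta_n$. Note that \eqref{pqN} allows $q>p$ (e.g. $N=4$, $p=6/5$, $q=2$); in that regime \eqref{equico} is automatically satisfied by the $L^q$ bound, and the construction of Proposition~\ref{procex} then meets all the stated hypotheses while its limit is not $\sigma:\eta$. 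So your interpolation step cannot work there: the hypothesis that actually feeds Lemma~\ref{le1cc} (and that the paper's argument uses) is the equi-integrability of $|\eta_n|^q$, equivalently one must work in the range $p\ge q$.
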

In order to state the case $p=1$, $q=N-1$, recall the definition of the Lorentz space $L^{p,1}(E)$:
\begin{Def} Let $E$ be a measurable set of $\RR^N$. For a measurable function $f:E\to\RR$, the non-increasing rearrangement $f^\ast:[0,\infty)\to\RR$ of $f$ is defined by
\beq\label{defast}
f^\ast(t):=\inf\big\{\lambda\geq 0:\big|\{x\in E:|f(x)|>\lambda\}\big|\leq t\big\}.
\eeq
Then, we define $L^{p,1}(E)$, $p>1$, as the space of measurable functions $f:E\to\RR$ such that
\beq\label{nolor}
\|f\|_{L^{p,1}(E)}=\int_0^\infty t^{-{1\over p'}}f^\ast(t)\,dt=\int_0^\infty \big|\{x\in E:|f(x)|>\lambda\}\big|^{1\over p}\,d\lambda<\infty.
\eeq
The space $L^{p,1}(E)$ is a Banach space equipped with the norm $\|\cdot\|_{L^{p,1}(E)}$.
\end{Def}
\begin{Thm}\label{calipi1}
Let $\Om$ be a bounded open set of $\RR^N$, with $N\geq 2$, and two sequences of matrix-valued functions $\sigma_n$ and $\eta_n$
satisfying \eqref{siLsetaLs'}, \eqref{cvsinetanmu},
\beq\label{cvvnpwnqcp1}
\left\{\ba{ll}
\sigma_n\stackrel{\ast}\rightharpoonup \sigma & \mbox{ in }\M(\Om)^{M\times N}
\\ \ecart
\eta_n\rightharpoonup \eta & \mbox{ in }L^{N-1,1}(\Om)^{M\times N},
\ea\right.
\eeq
\beq\label{dvnqcwnpcp1}
\left\{\ba{ll}
\Div\,\sigma_n\to \Div\,\sigma & \mbox{ in }W^{-1,(N-1)'}(\Om)^{M}
\\ \ecart
\Curl\,\eta_n\to \Curl\,\eta & \mbox{ in }L^N(\Om)^{M\times N\times N}.
\ea\right.
\eeq
Also assume that the sequence $\eta_n$ satisfies the equi-integrability condition
\beq\label{equllo}
\forall\,\ep>0,\ \exists\,\delta>0,\quad\|\eta_n\|_{L^{N-1,1}(E)^{M\times N}}\leq\ep,\;\;\forall\,n\in\NN,\ \forall\,E\mbox{ measurable set of }\Om,\ |E|<\delta.
\eeq
Then, for any function $u$ satisfying
\beq\label{eta-DuLN-11}
u\in W^{1,N-1}(\Om)^M,\quad Du\in L^{N-1,1}(\Om)^{M\times N},\quad \eta-Du\in W^{1,N}(\Om)^{M\times N},
\eeq
the limit $\mu$ satisfies the weak formulation
\beq\label{concte4}
\left\{\ba{l}\dis\forall\,B(x_0,R)\Subset\Om,\ \forall\,\varphi\in W^{1,\infty}(0,\infty),\mbox{ with }{\rm supp}\,\varphi\subset [0,R],\mbox{ such that }
\\ \ecart\dis
\exists\,U\mbox{closed set of }[0,R],\mbox{ with }u(x_0+ry)\in C^0(U;X^{1,N-1}(S_{N-1}))^M,\ {\rm supp}\,(\ph')\subset U,
\\ \ecart
\dis \langle\mu,\psi\rangle=-\,\big\langle{\rm Div}\,\sigma, u\psi\big\rangle
+\int_{B(x_0,R)}\big[\sigma(dx):(\eta-Du)\,\psi-(\sigma\nabla\psi)\cdot u\,dx\big],
\\ \ecart
\dis\hbox{where }\psi(x):=\varphi(|x-x_0|),
\ea\right.
\eeq
and $X^{1,N-1}(S_{N-1})$ is the space defined by
\beq\label{X1N-1}
X^{1,N-1}(S_{N-1}):=\left\{v\in W^{1,N-1}(S_{N-1}): \nabla v\in L^{N-1,1}(S_{N-1})^N\right\}.
\eeq
\end{Thm}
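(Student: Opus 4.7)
The plan is to adapt the proof of Theorem~\ref{thmdc} in the case $p=1$ to the critical exponent $q=N-1$, replacing the compactness Lemma~\ref{lemLpW1q} by its Lorentz-space analogue Lemma~\ref{le2cc} (alluded to in the introduction). Under the equi-integrability assumption~\eqref{equllo}, this analogue delivers \emph{uniform} convergence $u_n(x_0+r\cdot)\to u(x_0+r\cdot)$ in $C^0(S_{N-1})^M$ on a suitable set of radii, thanks to the imbedding $X^{1,N-1}(S_{N-1})\hookrightarrow C^0(S_{N-1})$. This uniform control is the crucial novelty, since $\sigma_n$ is only a bounded Radon measure and no merely weak $L^{p'}$-type estimate on the traces of $u_n$ would suffice for a duality against it. As in Theorem~\ref{thmdc}, if $\sigma$ and $\eta$ satisfy~\eqref{siLsetaLs'} then a direct integration by parts reduces~\eqref{concte4} to $\mu=\sigma:\eta$; otherwise, I invoke Proposition~2.5 of~\cite{BCM} to choose $u_n\in W^{1,N-1}(\Om)^M$ with $u_n\rightharpoonup u$ and $\eta_n-Du_n\to\eta-Du$ strongly in $W^{1,N}_{\rm loc}(\Om)^{M\times N}$.

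Fix $B(x_0,R)\Subset\Om$ and $\varphi\in W^{1,\infty}(0,\infty)$ with $\mathrm{supp}\,\varphi\subset[0,R]$ and $\mathrm{supp}\,(\varphi')\subset U$, where $U$ is a closed subset of $[0,R]$ on which $r\mapsto u(x_0+r\cdot)$ is continuous into $X^{1,N-1}(S_{N-1})^M$ (such a $U$ of measure arbitrarily close to $R$ exists by Lusin's theorem applied via~\eqref{eta-DuLN-11}). For $\lambda>0$, set
\[
U_n:=\bigl\{\,r\in U:\|\nabla u_n(x_0+r\cdot)\|_{L^{N-1,1}(S_{N-1})}+\|\nabla u(x_0+r\cdot)\|_{L^{N-1,1}(S_{N-1})}\leq\lambda\,\bigr\},
\]
and $\psi_n(x):=\varphi_n(|x-x_0|)$ with $\varphi_n(r):=\int_R^r\varphi'\,\mathbf{1}_{U_n}\,ds$. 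A Chebyshev-type argument combined with~\eqref{equllo} and the $W^{1,N}_{\rm loc}$-strong convergence of $\eta_n-Du_n$ gives $|U\setminus U_n|\leq c/\lambda$ uniformly in $n$, and Lemma~\ref{le2cc} yields $u_n(x_0+r\cdot)\to u(x_0+r\cdot)$ uniformly on $U_n\times S_{N-1}$.

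Plugging $\psi_n$ into the identity~\eqref{concte2} written for $\sigma_n,\eta_n$ gives
\[
\langle\sigma_n:\eta_n,\psi_n\rangle=-\langle\Div\,\sigma_n,u_n\psi_n\rangle+\int_\Om\sigma_n:(\eta_n-Du_n)\,\psi_n\,dx-\int_{\{|x-x_0|\in U_n\}}(\sigma_n\nabla\psi_n)\cdot u_n\,dx,
\]
and I then pass to the limit termwise. The $\Div$-term converges to $-\langle\Div\,\sigma,u\psi\rangle$ via~\eqref{dvnqcwnpcp1} paired with $u_n\psi_n\rightharpoonup u\psi$ in $W^{1,N-1}_0(\Om)^M$. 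The middle term is handled exactly as in the $p=1$ case of Theorem~\ref{thmdc}: split $\sigma_n=\zeta_n+(\sigma_n-\zeta_n)$ via Proposition~2.5 of~\cite{BCM} and use the Bourgain--Brezis theorem~\cite{BoBr} to place the divergence-free part $\zeta_n$ weakly in $W^{-1,N'}_{\rm loc}(\Om)^{M\times N}$, paired against $\eta_n-Du_n\to\eta-Du$ strong in $W^{1,N}_{\rm loc}$, while $(\sigma_n-\zeta_n):(\eta_n-Du_n)$ converges by the strong $L^{(N-1)'}_{\rm loc}$-convergence. The left-hand side splits as $\langle\sigma_n:\eta_n,\psi\rangle+\langle\sigma_n:\eta_n,\psi_n-\psi\rangle$; the first piece converges to $\langle\mu,\psi\rangle$ by~\eqref{cvsinetanmu}, and the second is controlled via Lemma~\ref{lem.wcW-11} (applied to a primitive $f_n\in L^1(\Om)^N$ of $\sigma_n:\eta_n$) together with $|U\setminus U_n|\leq c/\lambda$.

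The hard part will be the last integral
\[
\int_{U_n}\varphi'(r)\,r^{N-1}\int_{S_{N-1}}\bigl(\sigma_n(x_0+ry)\,y\bigr)\cdot u_n(x_0+ry)\,ds(y)\,dr,
\]
since $\sigma_n$ is only a measure and therefore requires \emph{uniform} continuity of the integrand in $(r,y)\in U_n\times S_{N-1}$ in order to pair with the weak-$*$ limit $\sigma$. This is precisely the content of Lemma~\ref{le2cc}, whose conclusion is unlocked only by the Lorentz equi-integrability~\eqref{equllo}; meanwhile the contribution from $U\setminus U_n$ is bounded by $C(\lambda)\,|U\setminus U_n|\,\|\sigma_n\|_{\M(\Om)}$, and vanishes after $n\to\infty$ and then $\lambda\to\infty$. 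Sending $\lambda\to\infty$, and finally exhausting $[0,R]$ by admissible $U$ via the weak-$*$ density argument of Remark~\ref{rem.p=1}, gives~\eqref{concte4} for every admissible $\varphi$. Independence of the right-hand side from the particular choice of $u$ satisfying~\eqref{eta-DuLN-11} then follows by the argument of Remark~\ref{rem1tp}.
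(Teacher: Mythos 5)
Your strategy is precisely the paper's: the authors prove Theorem~\ref{calipi1} by rerunning the proof of Theorem~\ref{thmdc} in the case $p=1$ with Lemma~\ref{lemLpW1q} replaced by Lemma~\ref{le2cc}, and every structural step you describe (the choice of $u_n$ via Proposition~2.5 of \cite{BCM}, the truncated radial test functions $\psi_n$, Lemma~\ref{lem.wcW-11} for the left-hand side, the Bourgain--Brezis splitting of $\into\sigma_n:(\eta_n-Du_n)\,\psi_n\,dx$, the use of the uniform convergence on good annuli to pair the sphere integrals with the weak-$*$ convergent measures $\sigma_n$, and the closing Lusin/density argument) coincides with theirs. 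Two local points, however, need repair. First, a minor one: the set $U_n$ you define by the Lorentz-norm threshold alone is not the set for which Lemma~\ref{le2cc} gives \eqref{pr3Un2}; the lemma's $U_n$ also removes the exceptional sets $E_{n,k}$ on which gradients concentrate on small spherical caps, and this removal is essential for the uniform convergence. You should simply invoke the $U_n$ furnished by the lemma.

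Second, and more seriously, the estimate you assert for the bad radii, namely that the contribution from $U\setminus U_n$ is bounded by $C(\lambda)\,|U\setminus U_n|\,\|\sigma_n\|_{\M(\Om)}$, is false as written. The quantity that actually occurs is $\int_{\{|x-x_0|\in U\setminus U_n\}}|\sigma_n|\,dx$, the $|\sigma_n|$-mass of the annuli whose radii lie in $U\setminus U_n$, and this is not controlled by the one-dimensional measure of $U\setminus U_n$ multiplied by the total mass: since no equi-integrability is assumed on $\sigma_n$ (it only converges weak-$*$ in $\M(\Om)$), its mass may concentrate exactly on annuli where $\nabla u_n$ is large, i.e.\ on radii excluded from $U_n$. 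This is precisely the point where the paper's explicit estimate \eqref{dthp8} exploits the factor $|U\setminus U_n|^{1\over p'}$, which is useless when $p=1$; the paper itself leaves this term implicit in the $p=1$ case (``it is also similar''), so you cannot lean on \eqref{dthp8} here. To close your argument you must genuinely control the $|\sigma_n|$-mass of the exceptional annuli (not merely their radial Lebesgue measure), or restructure the passage from $\int_{U_n}$ to $\int_U$ in the term $\int_{U_n}\varphi'(r)\,r^{N-1}\int_{S_{N-1}}(\sigma_n(x_0+ry)\,y)\cdot u(x_0+ry)\,ds(y)\,dr$ so that only quantities controlled by \eqref{cvsinetanmu}, \eqref{dvnqcwnpcp1} and Lemma~\ref{le2cc} appear; as stated, this step is a gap.
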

\begin{Rem}\label{rem.dcLN-11}
Let $u$ be a function in $W^{1,N-1}(\Om)^M$ such that $Du\in L^{N-1,1}(\Om)^{M\times N}$, and let $v:(0,R)\times S_{N-1}\to\RR^M$ be the function defined by $v(r,y):=u(x_0+ry)$, so that $\nabla_y v$ is the tangential part of $\nabla u$ on $\partial B(x_0,r)$.
By H\"older's inequality we have for any $\lambda>0$,
\[
\ba{ll}
\dis \int_0^R r^{N-1}\left(\int_{S_{N-1}}\kern -.4cm 1_{\{|\nabla_y v|>\lambda\}}\,ds(y)\right)^{1\over N-1}\kern -.2cm dr
& \kern -.2cm \dis \leq C R^{N'(N-2)}\left(\int_0^R\kern -.2cm\int_{S_{N-1}}\kern -.4cm 1_{\{|\nabla u(x_0+ry)|>\lambda\}}\,ds(y)\,r^{N-1}\,dr\right)^{1\over N-1}
\\ \ecart
& \kern -.2cm \dis \leq C R^{N'(N-2)}\Big(\left|\big\{x\in B(x_0,R):|\nabla u(x)|>\lambda\big\}\right|\Big)^{1\over N-1}.
\ea
\]
Hence, integrating the previous inequality with respect to $\lambda>0$ and using that $\nabla u\in L^{N-1,1}(\Om)^N$, it follows that $v$ is in $L^1_{r^{N-1}dr}(0,R;X^{1,N-1}(S_{N-1}))^M$, and thus in $L^1_{r^{N-1}dr}(0,R;C^0(S_{N-1}))^M$ since the Lorentz space $L^{N-1,1}(S_{N-1})$ is imbedded into $C^0(S_{N-1})$ (see \cite{Tar3}, Chap.~31). Moreover, by Lusin's theorem, for any $\ep>0$, there exists a closed set $U$ satisfying the second line of \eqref{concte4} such that $|U|>R-\ep$.
Hence, for $\sigma\in\M(\Om)^{M\times N}$,  the integral term
\[
\int_{B(x_0,R)}(\sigma\nabla\psi)\cdot u\,dx,\quad\mbox{where }\psi(x):=\varphi(|x-x_0|),
\]
has a sense for any function $\ph$ satisfying the two first lines of \eqref{concte4}. Therefore, we can conclude as in Remark~\ref{rem.p=1} that the weak formulation \eqref{concte4} fully characterizes the distribution $\mu$.
\end{Rem}
The proof of the two last theorems is similar to the one of Theorem~\ref{thmdc} using Lemma~\ref{le1cc} below in the case $p>1$, and
Lemma~\ref{le2cc} below in the case $p=1$, instead of Lemma~\ref{lemLpW1q}. So we restrict ourselves to the proof of these two lemmas.
\begin{Lem}\label{le1cc}
Let $N>2$, let $R_0,R>0$ be such that $R_0<R$, and let $q\in[1,N-1)$. Consider a sequence $u_n$ in $W^{1,q}(C(R_0,R))$ which converges weakly to a function $u$ in $W^{1,q}(C(R_0,R))$, and such that $|\nabla u_n|^q$ is equi-integrable in $L^1(\Om)$.
Consider $v_n,v\in L^q(R_0,R;W^{1,q}(S_{N-1}))$ defined by~\eqref{devn}.
\\
Then, for any $U$ subset of $[R_0,R]$ such that $v\in L^\infty(U; L^{q^*_{N-1}}(S_{N-1}))^M$, for any $\lambda,\ep>0$, there exists a sequence $U_n\subset U$ satisfying
\beq\label{pr1Un}
|U\setminus U_n|\leq 
{1\over R_0^{N-1}}\left({1\over \lambda}\int_{\{|x|\in U\}}\big(|\nabla u_n|^q+|\nabla u|^q\big)\,dx+\ep\right),
\eeq
\beq\label{pr2Un}
\int_{S_{N-1}}\big(|\nabla u_n(ry)|^q+|\nabla u(ry)|^q\big)\,ds(y)<\lambda,\quad \;\;\hbox{a.e. } r\in U_n,
\eeq
\beq\label{pr3Un}
\|v_n-v\|_{L^\infty(U_n;L^{q^*_{N-1}}(S_{N-1}))}\to 0.
\eeq
\end{Lem}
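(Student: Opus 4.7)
The plan is to follow the template of the proof of Lemma~\ref{lemLpW1q}, but the critical Sobolev exponent $q^*_{N-1}$ lacks compact embedding of $W^{1,q}(S_{N-1})$, so the argument based on Lions' inequality must be replaced by a truncation argument that exploits the equi-integrability of $|\nabla u_n|^q$. The overall strategy is to construct $U_n$ as an intersection of two restrictions: a pointwise gradient bound (as in Lemma~\ref{lemLpW1q}) and a tail restriction driven by equi-integrability.

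First I would set $U_n^{(1)} := \{r \in U : \int_{S_{N-1}}(|\nabla u_n(ry)|^q + |\nabla u(ry)|^q)\,ds(y) \leq \lambda\}$, which immediately gives \eqref{pr2Un} and, by Chebyshev, bounds $|U\setminus U_n^{(1)}|$ by the first term of \eqref{pr1Un}. For the $\epsilon/R_0^{N-1}$ contribution in \eqref{pr1Un} I would introduce a second restriction controlled by a truncation level $K_n$: by equi-integrability of $|\nabla u_n|^q$ and integrability of $|\nabla u|^q$, for a sequence $\kappa_n \to 0$ with $\kappa_n \leq \epsilon^2$ (whose precise choice is made below) one can pick $K_n$ with $\sup_m\int_{\{|\nabla u_m|>K_n\}}|\nabla u_m|^q\,dx + \int_{\{|\nabla u|>K_n\}}|\nabla u|^q\,dx \leq \kappa_n$. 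Setting $\alpha_n(r) := \int_{S_{N-1}}\bigl[|\nabla u_n(ry)|^q 1_{|\nabla u_n(ry)|>K_n} + |\nabla u(ry)|^q 1_{|\nabla u(ry)|>K_n}\bigr]\,ds(y)$ and $U_n := U_n^{(1)} \cap\{\alpha_n \leq \sqrt{\kappa_n}\}$, Fubini combined with Chebyshev yields $|U_n^{(1)}\setminus U_n| \leq \sqrt{\kappa_n}/R_0^{N-1} \leq \epsilon/R_0^{N-1}$, completing \eqref{pr1Un}.

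For \eqref{pr3Un}, Simon's compactness (exactly as in Lemma~\ref{lemLpW1q}) gives $\eta_n := \|v_n-v\|_{C^0([R_0,R]; L^q(S_{N-1}))} \to 0$. Setting $w_n := v_n-v$, I would decompose $w_n = T_A w_n + (w_n-T_A w_n)$ with the truncation at $A = 1$. The inequality $|T_A w_n|^{q^*_{N-1}} \leq A^{q^*_{N-1}-q}|T_A w_n|^q$ yields $\|T_A w_n\|_{L^{q^*_{N-1}}(S_{N-1})} \leq C\eta_n^{q/q^*_{N-1}}$. The tail $w_n - T_A w_n$ vanishes on $\{|w_n|\leq A\}$, a set of measure $\geq |S_{N-1}|/2$ for $n$ large since $|\{|w_n(r,\cdot)|>A\}|\leq A^{-q}\eta_n^q$, so the Poincaré--Sobolev inequality on $S_{N-1}$ for functions vanishing on a set of uniformly positive measure gives $\|w_n-T_A w_n\|_{L^{q^*_{N-1}}} \leq C\|\nabla_y w_n\cdot 1_{|w_n|>A}\|_{L^q(S_{N-1})}$. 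Splitting the last integrand according to whether $|\nabla u_n(ry)|\lessgtr K_n$ (and similarly for $\nabla u$), the low-gradient piece contributes at most $CRK_n|\{|w_n|>A\}|^{1/q} \leq CRK_n\eta_n$ and the high-gradient piece is bounded by $CR\alpha_n(r)^{1/q} \leq CR\kappa_n^{1/(2q)}$, so that uniformly in $r\in U_n$,
\[
\|w_n(r,\cdot)\|_{L^{q^*_{N-1}}(S_{N-1})} \leq C\eta_n^{q/q^*_{N-1}} + CRK_n\eta_n + CR\kappa_n^{1/(2q)}.
\]

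The main obstacle is the diagonal choice of $\kappa_n$ that forces all three terms above to vanish: the first and third tend to zero from $\eta_n\to 0$ and $\kappa_n\to 0$, but the second requires $K_n\eta_n\to 0$, forcing $K_n$ to grow slowly relative to $1/\eta_n$. Since the map $\kappa\mapsto K(\kappa)$ is non-increasing with $K(\kappa)\to\infty$ as $\kappa\to 0^+$, its right-inverse $\kappa_*(M) := \sup\{\kappa : K(\kappa)\leq M\}$ satisfies $\kappa_*(M)\to 0$ as $M\to\infty$; taking $\kappa_n := \min\bigl(\kappa_*(1/\sqrt{\eta_n}),\,1/n,\,\epsilon^2\bigr)$ simultaneously ensures $\kappa_n\to 0$, $\kappa_n\leq\epsilon^2$ and $K_n\leq 1/\sqrt{\eta_n}$, so $K_n\eta_n\leq\sqrt{\eta_n}\to 0$ and \eqref{pr3Un} follows.
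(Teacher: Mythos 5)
Your route is genuinely different from the paper's. The paper handles the loss of compactness at the critical exponent locally: using equi-integrability it removes exceptional sets of radii $E_{n,k}$ (via a measurable-selection argument) so that for the remaining radii the gradient mass on every spherical cap of size $h_k$ is at most $\ep/2^k$, and then combines a Vitali covering of $S_{N-1}$ with the scale-invariant Sobolev--Wirtinger inequality on caps, together with the bound $\la$ from \eqref{pr2Un}, to obtain \eqref{pr3Un}. You instead truncate in the function value (at height $1$) and in the gradient (at level $K_n$), use the uniform-in-$r$ convergence $\eta_n=\|v_n-v\|_{C^0([R_0,R];L^q(S_{N-1}))}\to0$ to make the truncated part small in $L^{q^*_{N-1}}$ and the super-level set $\{|w_n|>1\}$ uniformly small, and then apply a Sobolev--Poincar\'e inequality for functions vanishing on a set of uniformly positive measure; equi-integrability enters only through a scalar tail function. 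This avoids both the covering and the measurable selection, and your proof of \eqref{pr3Un} does not even use $\la$ (which is only needed for \eqref{pr2Un}); your Chebyshev--Fubini estimates for \eqref{pr1Un} and \eqref{pr2Un} are correct.

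Two points need repair. First, the final diagonalization is wrong as written: since $K(\cdot)$ is non-increasing, $\sup\{\kappa:K(\kappa)\le M\}$ is not small (you mean the infimum, i.e.\ essentially the tail function $\tau(M):=\sup_m\int_{\{|\nabla u_m|>M\}}|\nabla u_m|^q\,dx+\int_{\{|\nabla u|>M\}}|\nabla u|^q\,dx$), and, more seriously, choosing $\kappa_n$ first as $\min\big(\kappa_*(1/\sqrt{\eta_n}),1/n,\ep^2\big)$ does not give $K_n\le 1/\sqrt{\eta_n}$: whenever $1/n<\tau(1/\sqrt{\eta_n})$, attaining a tail $\le\kappa_n$ forces $K_n>1/\sqrt{\eta_n}$, and the key requirement $K_n\eta_n\to0$ is lost. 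The fix is to choose the truncation level first: fix $K_\ep$ with $\tau(K_\ep)\le\ep^2$, set $K_n:=\max\big(K_\ep,\min(n,\eta_n^{-1/2})\big)$ and $\kappa_n:=\tau(K_n)$; then $\kappa_n\le\ep^2$ for every $n$ (so \eqref{pr1Un} holds for all $n$), $\kappa_n\to0$, and $K_n\eta_n\le\max\big(K_\ep\eta_n,\sqrt{\eta_n}\big)\to0$, which is all your estimate needs. Second, for $q=1$ (allowed in the statement) the convergence $\eta_n\to0$ is not given by Simon's result ``exactly as in Lemma~\ref{lemLpW1q}'': that lemma only yields $L^m(R_0,R;L^1(S_{N-1}))$ convergence when $q=1$. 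You must argue as the paper does in its own first step: the traces on each sphere converge by compactness of $W^{1,1}(C(R_0,R))\hookrightarrow L^1(\partial B(0,r))$, and the equi-integrability of $|\nabla u_n|$ gives the equicontinuity $\|v_n(r_1,\cdot)-v_n(r_2,\cdot)\|_{L^1(S_{N-1})}\le C\int_{\{r_1<|x|<r_2\}}|\nabla u_n|\,dx$, whence $v_n\to v$ in $C^0([R_0,R];L^1(S_{N-1}))$. With these two corrections your argument is complete.
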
\noindent
\begin{proof} 
Since $W^{1,p}(C(R_0,R))$ is compactly imbedded in $L^1(\partial B(0,r))$ for any $r\in [R_0,R]$, the sequence $v_n(r,.)$ converges to $v(r,.)$ in $L^1(S_{N-1})^M$ for any $r\in [R_0,R]$. Also using that 
\[
\|v_n(r_1,\cdot)-v_n(r_2,\cdot)\|_{L^1(S_{N-1})}\leq C\int_{\{r_1<|x|<r_2\}}|\nabla u_n|\,dx,\quad\forall\,r_1,r_2\hbox{ with }R_0<r_1<r_2<R.
\]
and the equi-integrability of $|\nabla u_n|$ in $L^1(R_0,R)$, we easily conclude that $v_n$ converges to $v$ in $C^0([R_0,R];L^1(S_{N-1}))^M$.\par
Now, take $\ep>0$. By the equi-integrability of $|D u_n|^q$, for any $k\in \NN$, there exists $\delta_k>0$ such that for any measurable set $B\subset C(R_0,R)$ with $|B|<\delta_k$, we have
\beq\label{equile}
\int_{B}\varLambda_n\,dx<{\ep^2\over 2^{2k}},\quad\forall\, n\in\NN,\qquad\mbox{where }\varLambda_n:=|\nabla u_n|^q+|\nabla u|^q.
\eeq
Let $\phi:(0,\infty)\to\RR$ the function defined by 
\beq\label{promukb}
\phi(h):=\big|B(e_1,h)\cap S_{N-1}\big|,\quad\mbox{for }h>0,
\eeq
and let $h_k>0$ be such that
\beq\label{promuk}
\phi(h_k)\,{R^N-R_0^N\over N}<\delta_k.
\eeq
Then, for a.e. $r\in (R_0,R)$ and any $n,k\in\NN$, denote
\[
T_{n,k}(r):=\sup_{z\in S_{N-1}}\int_{B(z,h_k)\cap S_{N-1}}\hskip-30pt\varLambda_n(ry)\,ds(y).
\]
\par
We will prove that the set
\beq\label{accopo0}
E_{n,k}:=\left\{r\in (R_0,R): T_{n,k}(r)>{\ep\over 2^k}\right\},\quad\mbox{for }k,n\in\NN,
\eeq
satisfies
\beq\label{accopo}
|E_{n,k}|<{\ep\over 2^kR_0^{N-1}},\quad\forall\,k,n\in\NN.
\eeq
To this end, for fixed $k,n\in\NN$, consider for a.e. $r\in (0,R)$,
\[
F(r):=\left\{z\in S_{N-1}:\int_{B(z,h_k)\cap S_{N-1}}\hskip-30pt\varLambda_n(ry)\,ds(y)=\sup_{x\in S_{N-1}}\int_{B(x,h_k)\cap S_{N-1}}\hskip-30pt\varLambda_n(ry)\,ds(y)\right\}.
\]
Then, $F$ is a multifunction valued on closed sets. Let us also prove that it is measurable, {\em i.e.} that for any open set $G\subset S_{N-1}$ we have
\beq\label{deGmea}
\dis\big\{r\in (R_0,R): F(r)\cap G\not={\rm\O}\big\}\;\;\hbox{is measurable}.
\eeq
For this purpose, consider a sequence of points $z_l\in S_{N-1}$, which is dense in $S_{N-1}$. Then, taking into account that
\[
\sup_{z\in S_{N-1}}\int_{B(z,h_k)\cap S_{N-1}}\hskip-30pt\varLambda_n(ry)\,ds(y)=\sup_{l\in\NN}\int_{B(z_l,h_k)\cap S_{N-1}}\hskip-30pt\varLambda_n(ry)\,ds(y),
\]
we deduce that $T_{n,k}$ is measurable.
Now, consider an open set $G\subset S_{N-1}$ and observe that by continuity, $F(r)\cap G$ is not empty if and only if 
\[
\forall\, m\in\NN,\ \exists z_l\in G,\quad T_{n,k}(r)-\int_{B(z_l,h_k)\cap S_{N-1}}\hskip-30pt\varLambda_n(ry)\,ds(y)<{1\over m}.
\]
 Therefore, we get that
 \[
 \big\{r\in (R_0,R): F(r)\cap G\neq{\rm\O}\big\}=\bigcap_{m\in\NN}\,\bigcup_{z_l\in G}\,\left\{r\in(R_0,R):
T_{n,k}(r)-\int_{B(z_l,h_k)\cap S_{N-1}}\hskip-30pt\varLambda_n(ry)\,ds(y)<{1\over m}\right\}
 \]
 is measurable.
 \par 
 Since $F$ is valued on closed sets and measurable, we can apply the selection measurable theorem to derive a measurable function $g:(R_0,R)\mapsto S_{N-1}$ satisfying
 \[
 \int_{B(g(r),h_k)\cap S_{N-1}}\hskip-30pt\varLambda_n(ry)\,ds(y)=\sup_{z\in S_{N-1}}\int_{B(z,h_k)\cap S_{N-1}}\hskip-30pt\varLambda_n(ry)\,ds(y),\;\;\hbox{a.e. }r\in (R_0,R).
 \]
This implies that
\beq\label{defcB}
B:=\left\{ry\in C(R_0,R):y\in B\big(g(r),h_k\big)\cap S_{N-1}\right\}
\eeq
is a measurable set of $C(R_0,R)$, which by \eqref{promukb}, \eqref{promuk} and rotation invariance satisfies
\beq\label{medcB}
|B|\leq\phi(h_k)\int_{R_0}^Rr^{N-1}\,dr=\phi(h_k)\,{R^N-R_0^N\over N}<\delta_k.
\eeq
By \eqref{equile} and \eqref{accopo0} this yields
\[
{\ep^2\over 2^{2k}}>\int_B\Lambda_n\,dx\geq\int_{R_0}^RT_{n,k}(r)\,r^{N-1}\,dr\geq{\ep\over 2^k}\,R_0^{N-1}\,|E_{n,k}|,
\]
hence the desired estimate \eqref{accopo}.
\par
Now, for $\lambda>0$, define the set
\beq\label{defUn}
U_{n}:=\left\{r\in U\setminus\bigcup_{k\in\NN}E_{n,k}:\int_{S_{N-1}}\varLambda_n(ry)\,ds(y)<\lambda\right\}.
\eeq
Then, \eqref{pr2Un} is satisfied by definition, while
\[
\ba{ll}
|U_n| & \dis \geq \left|\,\left\{r\in U: \int_{S_{N-1}}\varLambda_n(ry)\,ds(y)<\lambda\right\}\,\right|-\sum_{k\in\NN}|E_{n,k}|
\\ \ecart
& \dis \geq |U|- \left|\,\left\{r\in U: \int_{S_{N-1}}\varLambda_n(ry)\,ds(y)\geq\lambda\right\}\,\right|-{\ep\over R_0^{N-1}}
\\ \ecart
& \dis \geq |U|-{1\over R_0^{N-1}}\left({1\over \lambda}\int_{\{|x|\in U\}}\varLambda_n\,dx+\ep\right),
\ea
\]
which gives \eqref{pr1Un}.
\par
Let us now prove that \eqref{pr3Un} holds.
For this purpose, fix $k\in\NN$ and, using Vitali's covering theorem, consider $y_1,\cdots, y_{n_k}\in S_{N-1}$ such that
\[
S_{N-1}\subset \bigcup_{i=1}^{n_k}B(y_{i},h_k),\quad B(y_{i},h_k/5)\cap B(y_{j},h_k/5)={\rm\O},\;\;\hbox{if }i\not=j.
\]
Then, we have
\beq\label{lecfu}\ba{l}\dis\|v_n-v\|^{q^*_{N-1}}_{L^\infty(U_n;L^{q^*_{N-1}}(S_{N-1}))}
\leq\left\|\,\sum_{i=1}^{n_k}\|v_n-v\|^{q^*_{N-1}}_{L^{q^*_{N-1}}(B(y_i,h_k)\cap S_{N-1}))}\,\right\|_{L^\infty(U_n)}
\\ \ecart
\dis \leq 3^{q^*_{N-1}-1}\left\|\,\sum_{i=1}^{n_k}\Big\|\,v_n-{1\over \phi(h_k)}\int_{B(y_i,h_k)\cap S_{N-1}}\hskip -15pt v_n\,ds(y)\,\Big\|_{L^{q^*_{N-1}}(B(y_i,h_k)\cap S_{N-1}))}^{q^*_{N-1}}\,\right\|_{L^\infty(U_n)}
\\ \ecart
\dis +\,{3^{q^*_{N-1}-1}\over \phi(h_k)^{q^*_{N-1}}}\,\sum_{i=1}^{n_k}\left\|\,\int_{B(y_i,h_k)\cap S_{N-1}}\hskip -15pt(v_n-v)\,ds(y)\,\right\|^{q^*_{N-1}}_{L^\infty(U_n)}
\\ \ecart
\dis +\,3^{q^*_{N-1}-1}\left\|\,\sum_{i=1}^{n_k}\Big\|\,v-{1\over \phi(h_k)}\int_{B(y_i,h_k)\cap S_{N-1}}\hskip -15pt v\,ds(y)\,\Big\|_{L^{q^*_{N-1}}(B(y_i,h_k)\cap S_{N-1}))}^{q^*_{N-1}}\,\right\|_{L^\infty(U_n)}.
\ea
\eeq
Using the invariance by dilatations of Sobolev-Wirtinger's inequality it follows from \eqref{defUn} and \eqref{accopo0} that
\[
\ba{l}
\dis \left\|\,\sum_{i=1}^{n_k}\Big\|\,v_n-{1\over \phi(h_k)}\int_{B(y_i,h_k)\cap S_{N-1}}\hskip -15pt v_n\,ds(y)\,\Big\|_{L^{q^*_{N-1}}(B(y_i,h_k)\cap S_{N-1}))}^{q^*_{N-1}}\,\right\|_{L^\infty(U_n)}
\\ \ecart
\dis \leq\left\|\,\sum_{i=1}^{n_k}\left\|\nabla u_n(ry)\right\|_{L^{q}(B(y_i,h_k)\cap S_{N-1}))^N}^{q^*_{N-1}}\,\right\|_{L^\infty(U_n)}
\\ \ecart
\dis \leq C\mathop{\mbox{ess-sup}}_{\quad r\in U_n\atop \quad z\in S_{N-1}}\left(\|\nabla u_n(ry)\|_{L^{q}(B(z,h_k)\cap S_{N-1}))}^{q^*_{N-1}-q}\right)\left\|\nabla u_n\right\|^q_{L^\infty(U_n;L^q(S_{N-1}))}
\\ \ecart
\dis =C\mathop{\mbox{ess-sup}}_{\quad r\in U_n\atop \quad z\in S_{N-1}}\big(T_{n,k}(r)\big)^{q^*_{N-1}-q}\left\|\nabla u_n\right\|^q_{L^\infty(U_n;L^q(S_{N-1}))}\leq C\left({\ep\over 2^k}\right)^{q^*_{N-1}-q}\lambda.
\ea
\]
The same reasoning also shows that
\[
\left\|\,\sum_{i=1}^{n_k}\Big\|\,v-{1\over \phi(h_k)}\int_{B(y_i,h_k)\cap S_{N-1}}\hskip -15pt v\,ds(y)\,\Big\|_{L^{q^*_{N-1}}(B(y_i,h_k)\cap S_{N-1}))}^{q^*_{N-1}}\,\right\|_{L^\infty(U_n)}\leq C\left({\ep\over 2^k}\right)^{q^*_{N-1}-q}\lambda.
\]
Since $v_n$ converges to $v$ in  $L^\infty(R_0,R_1;L^1(S_{N-1}))$, the second term in the right-hand side of \eqref{lecfu} tends to zero. Therefore, taking the limsup as $\ep\to 0$ in \eqref{lecfu} we obtain that
\[
\limsup_{n\to \infty}\|v_n-v\|^{q^*_{N-1}}_{L^\infty(U_n;L^{q^*_{N-1}}(S_{N-1}))}\leq C\left({\ep\over 2^k}\right)^{q^*_{N-1}-q}\lambda,
\quad\forall\, k\in \NN,
\]
which finally yields \eqref{pr3Un}.
\end{proof}

\begin{Lem} \label{le2cc}
Let $N\geq 2$, and let $R_0,R>0$ be such that $R_0<R$. Consider a sequence $u_n$ in $W^{1,N-1}(C(R_0,R))$ which converges weakly to a function $u$ in $W^{1,N-1}(C(R_0,R))$ such that $\nabla u_n$ is bounded in $L^{N-1,1}(C(R_0,R))^{ N}$ and satisfies the equi-integrability condition
\beq\label{equi1n}
\forall\,\ep>0,\ \exists\,\delta>0,\quad\|\nabla u_n\|_{L^{N-1,1}(B)^{M}}\leq \ep,\;\;\forall\,n\in\NN,\ \forall\,B\subset C(R_0,R),\ |B|<\delta.
\eeq
Define $v_n,v\in L^{N-1}(R_0,R;W^{1,N-1}(S_{N-1}))$ by \eqref{devn}.
\\
Then, for any closed set $U$ of $[R_0,R]$ such that $v\in C^0(U;X^{1,N-1}(S_{N-1}))$,  for any $\lambda,\ep>0$, there exists a sequence $U_n\subset U$ satisfying
\beq\label{pr1Un2}
|U\setminus U_n|\leq {(R-R_0)^{N-2\over N-1}\over  R_0}\left({1\over \lambda}\,\|\nabla u_n\|_{L^{N-1,1}(\{|x|\in U\})^{N}}
+{1\over \lambda}\,\|\nabla u\|_{L^{N-1,1}(\{|x|\in U\})^{N}}+\ep\right),
\eeq
\beq\label{pr2Un2}
\big\|\,|\nabla u_n(ry)|+|\nabla u(ry)|\,\big\|_{L^{N-1,1}(S_{N-1})}<\lambda,\quad\hbox{a.e. } r\in U_n,
\eeq
\beq\label{pr3Un2}
\|v_n-v\|_{L^\infty(U_n;C^0(S_{N-1}))}\to 0.
\eeq
\end{Lem}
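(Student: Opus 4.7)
The plan is to mirror the proof of Lemma~\ref{le1cc}, replacing the $L^q$ norms on spherical caps by Lorentz $L^{N-1,1}$ norms, and replacing the critical Sobolev embedding $W^{1,q}\hookrightarrow L^{q^{*}_{N-1}}$ by the critical Lorentz--Sobolev embedding $X^{1,N-1}(S_{N-1})\hookrightarrow C^{0}(S_{N-1})$. First, since $W^{1,N-1}(C(R_0,R))$ is compactly imbedded in $L^{1}(\partial B(0,r))$ for every $r$, and since the equi-integrability of $|\nabla u_n|$ in $L^{N-1,1}$ (hence in $L^{1}$) controls $\|v_{n}(r_{1},\cdot)-v_{n}(r_{2},\cdot)\|_{L^{1}(S_{N-1})}$ by $\int_{\{r_{1}<|x|<r_{2}\}}|\nabla u_{n}|\,dx$, one gets $v_{n}\to v$ in $C^{0}([R_{0},R];L^{1}(S_{N-1}))^{M}$. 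This handles the limit of the averages at the end of the argument.

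The second step is the polar Fubini--H\"older bound that furnishes the precise constant in \eqref{pr1Un2}: for any nonnegative measurable $f$ on $C(R_{0},R)$,
\[
\int_{R_{0}}^{R}\|f(r\cdot)\|_{L^{N-1,1}(S_{N-1})}\,dr
=\int_{0}^{\infty}\!\int_{R_{0}}^{R}\bigl|\{y\in S_{N-1}:|f(ry)|>\tau\}\bigr|^{1\over N-1}dr\,d\tau,
\]
and H\"older in $r$ together with the change to polar coordinates give
\[
\int_{R_{0}}^{R}\|f(r\cdot)\|_{L^{N-1,1}(S_{N-1})}\,dr\leq {(R-R_{0})^{N-2\over N-1}\over R_{0}}\,\|f\|_{L^{N-1,1}(C(R_{0},R))}.
\]
Apply this to $|\nabla u_{n}|+|\nabla u|$ on $\{|x|\in U\}$. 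A standard Chebyshev argument then shows that the set where $\|(|\nabla u_{n}|+|\nabla u|)(r\cdot)\|_{L^{N-1,1}(S_{N-1})}\geq\lambda$ has $r$-measure at most $\lambda^{-1}$ times the above quantity.

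The third step introduces the concentration control on spherical caps. For each $k\in\NN$, use \eqref{equi1n} to pick $\delta_{k}>0$ so that $\|\nabla u_{n}\|_{L^{N-1,1}(B)}+\|\nabla u\|_{L^{N-1,1}(B)}<\varepsilon^{2}/2^{2k}$ whenever $|B|<\delta_{k}$, then choose $h_{k}$ as in \eqref{promuk}. Define
\[
T_{n,k}(r):=\sup_{z\in S_{N-1}}\Bigl\|(|\nabla u_{n}|+|\nabla u|)(r\cdot)\Bigr\|_{L^{N-1,1}(B(z,h_{k})\cap S_{N-1})}.
\]
Measurability of $T_{n,k}$ follows by replacing the sup over $S_{N-1}$ by a sup over a countable dense subset, using continuity of $z\mapsto\|f\|_{L^{N-1,1}(B(z,h_{k})\cap S_{N-1})}$ under symmetric differences. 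A measurable selection of the argmax produces a measurable $g:(R_{0},R)\to S_{N-1}$ and the tube $B$ as in \eqref{defcB}, which has $|B|<\delta_{k}$, so that applying the polar Fubini--H\"older bound above to $(|\nabla u_{n}|+|\nabla u|)1_{B}$ yields $\int_{R_{0}}^{R}T_{n,k}(r)\,dr\lesssim\varepsilon^{2}/2^{2k}$, and hence $|E_{n,k}|=|\{T_{n,k}>\varepsilon/2^{k}\}|\lesssim\varepsilon/(2^{k}R_{0})$. Take
\[
U_{n}:=\Bigl\{r\in U\setminus\textstyle\bigcup_{k}E_{n,k}:\big\|(|\nabla u_{n}|+|\nabla u|)(r\cdot)\big\|_{L^{N-1,1}(S_{N-1})}<\lambda\Bigr\};
\]
then \eqref{pr2Un2} is built in, and \eqref{pr1Un2} follows by subadditivity combined with the Chebyshev estimate from step~two.

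For the uniform convergence \eqref{pr3Un2}, fix $k$ and Vitali-cover $S_{N-1}$ by balls $B(y_{i},h_{k})$, $i=1,\dots,n_{k}$. For $r\in U_{n}$ and each $i$, decompose $v_{n}(r,\cdot)-v(r,\cdot)$ on $B(y_{i},h_{k})\cap S_{N-1}$ into its mean plus its oscillation; the scale-invariant critical Lorentz--Sobolev inequality $\|w-\overline{w}\|_{L^{\infty}}\leq C\|\nabla w\|_{L^{N-1,1}}$ on spherical caps (see \cite{Tar3}, Chap.~31) gives
\[
\|v_{n}(r,\cdot)-\overline{v_{n}(r,\cdot)}\|_{L^{\infty}(B(y_{i},h_{k})\cap S_{N-1})}\leq C\,T_{n,k}(r)\leq C\,\varepsilon/2^{k},
\]
and the analogous bound for $v$. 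The difference of means is controlled by the convergence $v_{n}\to v$ in $C^{0}([R_{0},R];L^{1}(S_{N-1}))^{M}$ from step~one, uniformly in $i$ for fixed $k$. Taking $\limsup_{n}$ on $U_{n}$ and then letting $k\to\infty$ (so $\varepsilon/2^{k}\to 0$) yields \eqref{pr3Un2}.

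The main obstacle is the third step: the measurable selection/Chebyshev argument for the Lorentz norm, because $\|f\|_{L^{N-1,1}}$ is defined through the distribution function rather than a single integral, so both the measurability of $T_{n,k}$ and the passage from a pointwise-in-$r$ tube estimate to an $L^{1}(dr)$ estimate must go through the layer-cake representation used in step~two. Everything else is a faithful transcription of the scheme of Lemma~\ref{le1cc} once the critical Lorentz--Sobolev inequality on caps replaces Sobolev--Wirtinger.
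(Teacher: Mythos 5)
Your proof is correct and follows essentially the same route as the paper: the $C^0([R_0,R];L^1(S_{N-1}))$ convergence, the definition of $T_{n,k}$ via Lorentz norms on caps, the measurable selection and tube estimate with the constant $(R-R_0)^{N-2\over N-1}/R_0$ coming from H\"older in $r$ applied inside the layer-cake formula, the Chebyshev bound, and the cap-wise mean/oscillation decomposition using the critical imbedding $X^{1,N-1}(S_{N-1})\hookrightarrow C^0(S_{N-1})$ for \eqref{pr3Un2}. Your ``polar Fubini--H\"older'' inequality is exactly the paper's inline computation packaged as a standalone estimate, and your step for \eqref{pr3Un2} fills in precisely the argument the paper delegates to the proof of Lemma~\ref{le1cc}.
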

\noindent
\begin{proof} The proof is quite similar to the one of Lemma~\ref{le1cc}. As before we first note that $v_n$ converges to $v$ in $C^0([R_0,R];L^1(S_{N-1}))$.
\par
Now, take $\ep>0$ and $\delta_k>0$, $k\in\NN$, such that for any measurable set $B\subset C(R_0,R)$ with $|B|<\delta_k$, we have
\beq\label{equilan}
\|\Lambda_n\|_{L^{N-1,1}(B)}<{\ep^2\over 2^{2k}},\quad\forall\, n\in\NN,\qquad\mbox{where }\Lambda_n:=|\nabla u_n|+|\nabla u|.
\eeq
Then, consider $h_k>0$ such that \eqref{promuk} holds, and for $r\in (R_0,R)$, $n,k\in\NN$, denote $T_{n,k}(r)$ by
\[
T_{n,k}(r):=\mathop{\mbox{ess-sup}}_{z\in S_{N-1}}\|\Lambda_n(ry)\|_{L^{N-1,1}(B(z,h_k)\cap S_{N-1})}.
\]
Now, the problem is to estimate the measure of the set $E_{n,k}$ defined by
\beq\label{accopo02}
E_{n,k}:=\left\{r\in (R_0,R): T_{n,k}(r)>{\ep\over 2^k}\right\},\quad\mbox{for }k,n\in\NN.
\eeq
For this purpose, proceeding as in the proof of Lemma~\ref{le1cc} we can construct a measurable function $g:(R_0,R)\to S_{N-1}$ such that for a.e. $r\in (0,R)$,
\[
\|\Lambda_n(ry)\|_{L^{N-1,1}(B(g(r),h_k)\cap S_{N-1})}=
\mathop{\mbox{ess-sup}}_{z\in S_{N-1}}\|\Lambda_n(ry)\|_{L^{N-1,1}(B(z,h_k)\cap S_{N-1})}=T_{n,k}(r).
\]
The set $B$ defined by \eqref{defcB} has a measure less than $\delta_k$. Hence, using successively \eqref{equilan}, H\"older's inequality and \eqref{accopo02} if follows that
\[
\ba{l}
\dis {\ep^2\over 2^{2k}}\geq\|\Lambda_n\|_{L^{N-1,1}(B)}
=\int_0^\infty\left(\int_{R_0}^Rr^{N-1}\int_{B(g(r),h_k)\cap S_{N-1}}\hskip-20pt 1_{\{\Lambda_n>\lambda\}}\,ds(y)\,dr\right)^{1\over N-1}d\lambda
\\ \ecart
\dis \geq {R_0\over (R-R_0)^{N-2\over N-1}}\int_{R_0}^R\int_0^\infty\left(\int_{B(g(r),h_k)\cap S_{N-1}}\hskip-20pt 1_{\{\Lambda_n>\lambda\}}\,ds(y)\right)^{1\over N-1}\hskip -5ptd\lambda\,dr
\\ \ecart
\dis = {R_0\over (R-R_0)^{N-2\over N-1}}\int_{R_0}^R\|\Lambda_n\|_{L^{N-1,1}(B(g(r),h_k)\cap S_{N-1})}\,dr
\geq {R_0\over (R-R_0)^{N-2\over N-1}}\,{\ep\over 2^k}\,|E_{n,k}|,
\ea
\]
which implies that
\beq\label{accopo2}
|E_{n,k}|<{(R-R_0)^{N-2\over N-1}\over R_0}\,{\ep\over 2^k},\quad\forall\,k,n\in\NN.
\eeq
A similar reasoning also shows that
\beq\label{LaLN-11}
\left|\left\{r\in U: \|\Lambda_n(ry)\|_{L^{N-1,1}(S_{N-1})}\geq\lambda\right\}\right|\leq {(R-R_0)^{N-2\over N-1}\over \lambda R_0}\,\|\Lambda_n\|_{L^{N-1,1}(\{|x|\in U\})},\quad\forall\,\lambda>0.
\eeq
Then, defining for $\lambda>0$, the set
\[
U_{n}:=\left\{r\in U\setminus \bigcup_{k\in\NN}E_{n,k}: \|\Lambda_n(ry)\|_{L^{N-1,1}(S_{N-1})}<\lambda\right\},
\]
we deduce from \eqref{accopo2} and \eqref{LaLN-11} that \eqref{pr1Un2} and \eqref{pr2Un2} hold. The proof of \eqref{pr3Un2} is similar to the one of \eqref{pr3Un} taking into account that the space $X^{1,N-1}(S_{N-1})$ defined by \eqref{X1N-1} is continuously imbedded in $C^0(S_{N-1})$.
\end{proof}
\subsection{A counterexample}
In the previous section we have needed some equi-integrability condition to extend the div-curl result of Theorem~\ref{thmdc}
to the case 
\[
{1\over p}+{1\over q}=1+{1\over N-1}.
\]
Actually, the following counterexample  shows that the conclusion of Theorem~\ref{thmdc} is violated in general if the sequences $\sigma_n$ and $\eta_n$ are only bounded in $L^p(\Om)^{M\times N}$ and $L^q(\Om)^{M\times N}$ with \eqref{pqN}.
\par\medskip
Let $N\geq 2$ and $p,q\geq 1$ be such that \eqref{pqN} holds.
Let $\Om:=B'_1\times(0,1)$, where $B'_1$ the unit ball of $\RR^{N-1}$ centered at the origin. The points of $\Om$ are denoted by $(x',x_N)$. We also denote by $x'$ a point of $\Om$ whose last coordinate is zero. Consider the functions $\sigma_n$ and $\eta_n$, $n\geq 1$, defined in cylindrical coordinates by
\beq\label{vnwn3pq}
\left\{\ba{l}
\sigma_n(x):=n^{N-1\over p}\,a_n(|x'|)\,e_N
\\ \ecart\dis
\eta_n(x):=n^{N-1\over p'}\left(a_n'(|x'|)\,x_N\,{x'\over |x'|}+a_n(|x'|)\,e_N\right),
\ea\right.
\ \mbox{where}\quad a_n(r):=(1-r)^n.
\eeq
Then, we have
\begin{Pro}\label{procex}
The sequences $\sigma_n$ and $\eta_n$ defined by \eqref{vnwn3pq} satisfy
\beq\label{dicur}
{\rm div}\,\sigma_n=0,\quad {\rm curl}\,\eta_n=0\quad\hbox{in }\Om,
\eeq
\beq\label{cocont}
\left\{\ba{lll}
\sigma_n\rightharpoonup 0 & \hbox{ in }L^p(\Om)^N, & \hbox{if }p>1
\\ \ecart\dis
\sigma_n\stackrel{\ast}\rightharpoonup |S_{N-2}|\,(N-2)!\,\big(\delta_{\{x'=0\}}\otimes 1\big) & \hbox{ in }\M(\Om)^N, & \mbox{if }p=1,
\ea\right.
\eeq
\beq\label{cocont2}
\left\{\ba{ll}
\eta_n\rightharpoonup 0\;\;\hbox{in }L^q(\Om)^N, &\hbox{ if }q>1
\\ \ecart
\dis \eta_n\stackrel{\ast}\rightharpoonup 0\;\;\hbox{in }\M(\Om)^N, &\hbox{ if }q=1.
\ea\right.
\eeq
while
\beq\label{cocont3}
\sigma_n\cdot\eta_n\stackrel{\ast}\rightharpoonup |S_{N-2}|\,{(N-2)!\over 2^{N-1}}\,\big(\delta_{\{x'=0\}}\otimes 1\big)\;\;\hbox{in }\M(\Om)^N
\quad(\mbox{with }|S_0|:=1).
\eeq
\end{Pro}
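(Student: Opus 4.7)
The plan is to prove the four assertions by direct computation on the explicit formulas in \eqref{vnwn3pq}, the only essential tool being the radial rescaling $x' = y'/n$ which turns $(1-|x'|)^n$ into $e^{-|y'|}$ in the limit.

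First I would establish \eqref{dicur}. Since $\sigma_n$ points in $e_N$ and depends only on $x'$, its divergence vanishes trivially. For the curl, the key observation is that $\eta_n$ is a gradient: setting
\[
\phi_n(x) := n^{(N-1)/p'}\, a_n(|x'|)\, x_N,
\]
a direct differentiation reproduces exactly the two summands in the definition of $\eta_n$, so $\eta_n = \nabla\phi_n$ and hence $\curl\,\eta_n = 0$.

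Next I would handle the weak convergences \eqref{cocont}--\eqref{cocont2}. After polar coordinates in the $x'$ variable, every relevant integral reduces to a Beta-type integral $\int_0^1 (1-r)^{\alpha n} r^{N-2}\,dr$, and the substitution $r = s/n$ together with $(1-s/n)^{\alpha n}\to e^{-\alpha s}$ yields the asymptotics $\sim \alpha^{-(N-1)}(N-2)!\,n^{-(N-1)}$. The scalings $n^{(N-1)/p}$ and $n^{(N-1)/p'}$ in \eqref{vnwn3pq} are calibrated precisely so that $\|\sigma_n\|_{L^p(\Om)}$ and $\|\eta_n\|_{L^q(\Om)}$ are uniformly bounded; the boundedness of $\|\eta_n\|_{L^q(\Om)}$ uses the critical identity $1/p+1/q=1+1/(N-1)$ to absorb the extra factor $n$ coming from $a_n'$. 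For $p>1$, $\sigma_n\to 0$ pointwise for $x'\neq 0$, so Vitali's theorem (boundedness plus a.e.\ convergence) gives weak convergence to $0$ in $L^p$; for $p=1$, testing against $\psi\in C_c(\Om)$ and applying the rescaling produces the limit $|S_{N-2}|(N-2)!\int_0^1 \psi(0,x_N)\,dx_N$ on the $e_N$-component, as claimed. For $\eta_n$, the decisive point is that the same rescaling applied to $\phi_n$ shows $\|\phi_n\|_{L^q(\Om)}\to 0$ (the criticality identity now yields a strictly negative exponent of $n$), so $\eta_n = \nabla\phi_n\to 0$ in $\D'(\Om)$; combined with the uniform $L^q$ (resp.\ $\M$) bound, this forces the weak (weak-$\ast$) limit to be $0$.

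Finally, for the product \eqref{cocont3} the crucial cancellation is $(x'/|x'|)\cdot e_N = 0$, which annihilates the radial-derivative part of $\eta_n$ when dotted with $\sigma_n$. Using $1/p+1/p'=1$ this leaves
\[
\sigma_n\cdot\eta_n \;=\; n^{N-1}(1-|x'|)^{2n},
\]
and the same rescaling with $\alpha = 2$ gives the weak-$\ast$ limit $|S_{N-2}|(N-2)!\,2^{-(N-1)}(\delta_{\{x'=0\}}\otimes 1)$ in $\M(\Om)$, matching the announced constant. The main obstacle in this proof is really just exponent bookkeeping: one must verify that the hypothesis $1/p+1/q=1+1/(N-1)$ is used in exactly the right places to make $\|\eta_n\|_{L^q}$ uniformly bounded while $\|\phi_n\|_{L^q}\to 0$, and that the geometric orthogonality $x'\perp e_N$ alone --- rather than any compensation arising from $\div\sigma_n$ or $\curl\,\eta_n$ --- is responsible for the explicit constant $2^{-(N-1)}$ appearing in the nontrivial concentrated limit.
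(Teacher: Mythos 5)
Your proposal is correct and follows essentially the same route as the paper, whose proof is just a sketch reducing everything to the pointwise decay \eqref{co1-rn} and the rescaled beta-integral asymptotics \eqref{paliej}; your computations (the exponent bookkeeping via $1/p+1/q=1+1/(N-1)$, the orthogonality $x'\perp e_N$ giving $\sigma_n\cdot\eta_n=n^{N-1}(1-|x'|)^{2n}$, and the limit constant $(N-2)!/2^{N-1}$ from \eqref{paliej} with $\alpha=2$) are exactly the ``lengthy but easy computation'' the authors omit. The only cosmetic difference is your use of the explicit potential $\phi_n(x)=n^{(N-1)/p'}a_n(|x'|)\,x_N$ to get ${\rm curl}\,\eta_n=0$ and $\eta_n\rightharpoonup 0$, which is a clean shortcut but not a different method.
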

\begin{proof} It is clear that $\sigma_n$ is divergence free and $\eta_n$ is curl free in $\Om$. Moreover, a lengthy but easy computation shows that convergences \eqref{cocont}, \eqref{cocont2}, \eqref{cocont3} are a simple consequence of
\beq\label{co1-rn}
(1-|x'|)^n\to 0,\quad\forall\,x'\in\RR^{N-1},\hbox{ with }0<|x'|<1,
\eeq
\beq\label{paliej}
n^{k+1}\int_0^1 r^k(1-r)^{n\alpha}\,dr\to {k!\over \alpha^{k+1}},\quad\forall\, k\in\NN,\ \forall\,\alpha\geq 0.
\eeq
\end{proof}
\section{Applications}
\subsection{Homogenization of systems with non equi-bounded coefficients}
\subsubsection{A $\bfm\Ga$-convergence approach}
Let $\Om$ be a bounded open set of $\RR^N$, $N\geq 2$.
Consider a symmetric non-negative tensor-valued function $\AA_n$ in $L^\infty(\Om)^{(M\times N)^2}$ which satisfies the two following conditions:
\begin{itemize}
\item There exists a constant $\al>0$ such that
\beq\label{ellAal}
\al\int_\Om\,|Dv|^2\,dx\leq\int_\Om\AA_n Dv:Dv\,dx,\quad\forall\,v\in H^1_0(\Om)^M.
\eeq
\item There exists a non-negative Radon measure $\Lambda$ on $\Om$ satisfying
\beq\label{cvAnLa}
\left\{\ba{rlll}
|\AA_n|\stackrel{\ast}\rightharpoonup\La & \mbox{ in }\M(\Om), & & \mbox{if }N=2
\\ \ecart
|\AA_n|^{\rho}\stackrel{\ast}\rightharpoonup \La & \mbox{ in }\M(\Om), & \mbox{ with }\rho>{N-1\over 2}, & \mbox{if }N>2.
\ea\right.
\eeq
\end{itemize}
\par\medskip\noindent
Consider the quadratic functional $F_n$ defined in $L^2(\Om)^M$ by
\beq\label{FnAn}
F_n(v):=\left\{\ba{cl}
\dis \int_\Om\AA_n Dv:Dv\,dx, & \mbox{if }v\in H^1_0(\Om)^M
\\ \ecart
\infty, & \mbox{if }v\in L^2(\Om)^M\setminus H^1_0(\Om)^M.
\ea\right.
\eeq
By a classical compactness result of $\Ga$-convergence (see, {\em e.g.}, \cite{Dal,Bra}) there exist a subsequence of $n$, still denoted by $n$, and a quadratic functional $F:L^2(\Om)^M\to[0,\infty]$ such that $F_n$ $\Ga$-converges to $F$ for the strong topology of $L^2(\Om)^M$, namely for any $v\in L^2(\Om)^M$,
\beq\label{Gacv}
\left\{\ba{ll}
\forall\,v_n\to v\;\;\mbox{ in }L^2(\Om)^M, & \dis F(v)\leq\liminf_{n\to\infty}F_n(v_n),
\\ \ecart
\exists\,\bar{v}_n\to v\;\;\mbox{ in }L^2(\Om)^M, & \dis F(v)=\lim_{n\to\infty}F_n(\bar{v}_n).
\ea\right.
\eeq
Since $F$ is quadratic, it has a bilinear form associated $\Psi:D(F)\times D(F)\to \RR$. We recall that $D(F)$ is a Hilbert space endowed with the scalar product defined by $\Psi$.
\par
Any sequence $\bar{v}_n$ satisfying the second statement of \eqref{Gacv} is called a {\em recovery} sequence for $F_n$ of limit $v$. 
Moreover, let $\bar{v}_n$ be a sequence in $L^2(\Om)^M$ satisfying
\beq\label{bvn}
\left\{\ba{l}
\bar{v}_n\to v\mbox{ strongly in }L^2(\Om)^M
\\ \ecart
\dis \sup_{n\geq 0}F_n(\bar{v}_n)<\infty.
\ea\right.
\eeq
If $\bar{v}_n$ is a recovery sequence for $F_n$ of limit $v$, then 
\beq\label{recseqa}
\lim_{n\to\infty}\int_\Om\AA_nD\bar{v}_n:Dw_n\,dx=\Psi(v,w),\qquad\forall\,w_n\in L^2(\Om)^M,\quad
\left\{\ba{l}
w_n\to w\mbox{  in }L^2(\Om)^M
\\ \ecart
\dis \sup_{n\geq 0}F_n(w_n)<\infty.
\ea\right.
\eeq
Reciprocally, if $\bar v_n$ satisfies
\beq\label{recseq}
\lim_{n\to\infty}\int_\Om\AA_nD\bar{v}_n:Dw_n\,dx=0,\qquad\forall\,w_n\in L^2(\Om)^M,\quad
\left\{\ba{l}
w_n\to 0\mbox{ strongly in }L^2(\Om)^M
\\ \ecart
\dis \sup_{n\geq 0}F_n(w_n)<\infty,
\ea\right.
\eeq
then $\bar v_n$ is a recovery sequence.
\par
Define the number $p$ by
\beq\label{prho}
p:=\left\{\ba{ll}
\dis 1, & \hbox{if }N=2
\\ \ecart
\dis {2\rho\over 1+\rho}\in\left({2N-2\over N+1},2\right), & \hbox{if }N>2.
\ea\right.
\eeq
Then, we have the following compactness result:
\begin{Thm}\label{thmGc}
Assume that conditions \eqref{ellAal}, \eqref{cvAnLa} hold. Then there exists a subsequence of $n$, still denoted by $n$, such that 
\beq\label{gafnf}
F_n \stackrel{\Gamma}\rightarrow F,
\eeq  
and there exist a symmetric non-negative bilinear operator $\nu:D(F)\to \M(\Om)$, a linear operator 
\beq\label{espsi}
\sigma\mbox{ which maps $D(F)$ into }
\left\{\ba{ll}
\M(\Om)^{M\times N}, & \mbox{if }N=2,\ \Lambda\not\in L^1(\Om)
\\ \ecart
L^{p}(\Om)^{M\times N}, & \mbox{if }N>2\hbox{ or }N=2,\ \Lambda\in L^1(\Om),
\ea\right.
\eeq 
and a tensor-valued function
\beq\label{Arho}
\AA\in\left\{\ba{ll}
\M(\Om)^{(M\times N)^2}, & \mbox{if }N=2,\ \Lambda\not\in L^1(\Om)
\\ \ecart
L^1(\Om)^{(M\times N)^2}, & \mbox{if }N=2,\ \Lambda\in L^1(\Om)
\\ \ecart
L^{\rho}(\Om)^{(M\times N)^2}, & \mbox{if }N>2,
\ea\right.
\eeq
satisfying the following conditions:\par\medskip
\begin{itemize}
\item The operators $\nu$ and $\sigma$ are strongly local in the sense
\beq\label{local}
\left\{\ba{l}\dis u,v\in D(F)
\\ \ecart
\dis Du=Dv\hbox{ a.e. in  }\om\subset\Om,\hbox{ open }
\ea\right.
\;\Rightarrow\;
\left\{\ba{l}\dis\nu(u,u)=\nu(v,v)
\\ \ecart\dis \sigma(u)=\sigma(v)
\ea\right.
\quad\hbox{in }\om.
\eeq
\item The operator $\nu$ satisfies the ellipticity condition
\beq\label{ellnual}
\al\int_\Om\,|Dv|^2\,dx\leq\int_\Om d\nu(u,u),\quad\forall\,u\in D(F).
\eeq
\item The tensor-valued measure $\AA$ satisfy the following bounds
\beq\label{acoA}
\left\{\ba{lll}
|\AA|\leq \Lambda & \hbox{ in }\Om, & \hbox{if }N=2
\\ \ecart
\dis |\AA|\leq(\Lambda^L)^{1\over \rho} & \hbox{ a.e. in }\Om, & \hbox{if }N>2,
\ea\right.
\eeq
where $\Lambda^L$ is the absolute continuous part of $\Lambda$ with respect to Lebesgue's measure.
\item The operators $\nu$, $\sigma$ and the tensor $\AA$ are related by
\begin{itemize}
\item For any $u,v\in D(F)$ and any open set $\om\subset\Om$,
\beq\label{relmusi} 
\left.\ba{ll}
v\in C^1(\om)^M, & \hbox{if }N=2,\ \Lambda\not\in L^1(\Om)
\\ \ecart
v\in W^{1,p'}(\om)^M, & \hbox{if }N>2\hbox{ or }N=2,\ \Lambda\in L^1(\Om) 
\ea\right\}
\Rightarrow\ \nu(u,v)=\sigma(u):Dv\hbox{ in }\om.
\eeq
\item If $N=2$, $\Lambda\not\in L^1(\Om)$, we have for any open set $\om\subset\Om$,
\beq\label{relsiA1}
\sigma(u)=\AA Du\;\;\hbox{in }\om,\quad\forall\,u\in D(F)\cap C^1(\om)^M.
\eeq
\item If $N=2$, $\Lambda\in L^1(\Om)$ or $N>2$, we have
\beq\label{relsiA2}
\sigma(u)=\AA Du\;\;\hbox{a.e. in }\Om,\quad\forall\, u\in W^{1,p'}_0(\Om)^M.
\eeq
\end{itemize}
Moreover, denoting by $\nu^L$ the absolute continuous part of $\nu$ with respect to Lebesgue's measure, we have
\beq\label{relsiA3}
\AA Du:Dv\in L^1(\Om),\quad\nu^L(u,v)=\AA Du:Dv\;\;\hbox{a.e. in }\Om,\qquad\forall\,u,v\in D(F),
\eeq
\item The functional $F$ is given by
\beq\label{funcF}
F(u)=\into d\nu(u,u),\quad\forall\, u\in D(F).
\eeq
\item For any recovery sequence $u_n$ for $F_n$ of limit $u\in D(F)$, we have
\beq\label{cvAnDunDun}
\AA_n Du_n:Du_n\stackrel{\ast}\rightharpoonup\nu(u,u)\quad\hbox{in }\M(\Om),
\eeq
\beq\label{cvAnDun}
\AA_n Du_n\rightharpoonup\sigma(u)\quad
\left\{\ba{ll}
\mbox{weakly $*$ in }\M(\Om)^{M\times N}, & \mbox{if }N=2
\\ \ecart
\mbox{weakly in }L^{p'}(\Om)^{M\times N}, & \mbox{if }N>2.
\ea\right.
\eeq
\end{itemize}
\end{Thm}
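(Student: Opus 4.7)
The plan is to combine the classical $\Ga$-compactness for quadratic forms on $L^2(\Om)^M$ with the div-curl result of Theorem~\ref{thmdc} (together with its critical-case extensions Theorem~\ref{calipn1} and Theorem~\ref{calipi1}) in order to produce and identify all the limit objects $F$, $\nu$, $\sigma$ and $\AA$. First, by the standard compactness theorem of $\Ga$-convergence, extract a subsequence along which $F_n\stackrel{\Ga}{\to} F$ in $L^2(\Om)^M$; the quadratic structure of $F_n$ is preserved in the limit (recovery sequences can be taken additive and $1$-homogeneous), so $F$ admits a symmetric bilinear form $\Psi$ on its domain $D(F)$. The coercivity \eqref{ellAal} passes to the limit, giving \eqref{ellnual} and the continuous inclusion $D(F)\hookrightarrow H^1_0(\Om)^M$.

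For each $u\in D(F)$, fix a recovery sequence $\bar u_n$. By H\"older's inequality combined with \eqref{cvAnLa}, one has $\|\AA_n D\bar u_n\|_{L^p(\Om)}\leq \||\AA_n|\|_{L^\rho(\Om)}\|D\bar u_n\|_{L^2(\Om)}$, so $\AA_n D\bar u_n$ is bounded in $L^p(\Om)^{M\times N}$ (respectively in $\M(\Om)^{M\times N}$ when $N=2$ and $\La\notin L^1$), with $p=2\rho/(\rho+1)$ as in \eqref{prho}. Extract a further subsequence and call $\sigma(u)$ its weak (or weak-$*$) limit. The core step is then to invoke Theorem~\ref{thmdc} with $\sigma_n:=\AA_n D\bar u_n\in L^p(\Om)^{M\times N}$ and $\eta_n:=D\bar v_n\in L^2(\Om)^{M\times N}$ for a recovery sequence $\bar v_n$ of a test element $v\in D(F)$: the strict inequality $1/p+1/2<1+1/(N-1)$ follows from $\rho>(N-1)/2$; the curl of $\eta_n$ vanishes; the divergence of $\sigma_n$ is compact in $W^{-1,2}(\Om)^M$ via the variational identity \eqref{recseqa}; and $\sigma_n:\eta_n$ is bounded in $L^1(\Om)$ and converges weakly in $W^{-1,1}(\Om)$ to $\Psi(u,v)$, again by \eqref{recseqa} after localization by radial cut-offs $\psi(x)=\ph(|x-x_0|)$. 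The conclusion of Theorem~\ref{thmdc} then simultaneously shows that $\sigma(u)$ does not depend on the choice of recovery sequence $\bar u_n$ and yields the identity $\Psi(u,v)=\int_\Om \sigma(u):Dv\,dx$ on a sufficiently rich class of $v$. Polarizing and localizing with scalar cut-offs produces the measures $\nu(u,v)$, the weak-$*$ convergence \eqref{cvAnDunDun}, and the representation \eqref{funcF}.

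Strong locality \eqref{local} is then obtained by testing the identity $\Psi(u,v)=\int_\Om\sigma(u):Dv\,dx$ against functions $v$ supported in $\om$. To construct $\AA$ itself, plug truncated affine fields $u_\xi(x)=\chi(x)\,\xi\,x$ into $\sigma$; by locality and linearity of $\sigma$, the map $\xi\mapsto\sigma(u_\xi)(x)$ is linear in the constant matrix $\xi=Du_\xi$ on the piece where $\chi\equiv 1$, which defines $\AA(x)\xi$ there and, as $\chi$ varies, on all of $\Om$, giving \eqref{relsiA1}--\eqref{relsiA3}. The pointwise bound \eqref{acoA} on $|\AA|$ is then obtained by combining weak-$*$ lower semicontinuity of the $L^\rho$-norm of $|\AA_n|$ with the elementary estimate $|\AA_n D\bar u_n|\leq|\AA_n|\,|D\bar u_n|$ through a standard duality/covering argument.

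The main obstacle is the middle paragraph: verifying the hypotheses of the div-curl theorem on sequences that are produced only as $\Ga$-convergence recovery sequences. In particular, the $W^{-1,2}$-compactness of $\Div(\AA_n D\bar u_n)$ and the $W^{-1,1}$-convergence of the product $\AA_n D\bar u_n:D\bar v_n$ must be extracted purely from the variational characterization \eqref{recseqa}, without any explicit PDE that $\bar u_n$ solves. A secondary delicate point is the measure-valued regime $N=2$, $\La\notin L^1(\Om)$, where Theorem~\ref{calipi1} has to replace Theorem~\ref{thmdc} and where \eqref{relmusi}, \eqref{relsiA1} must be restricted to $C^1$ test fields, reflecting the loss of compactness inherent in the $L^1$ case.
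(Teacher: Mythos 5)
Your overall architecture ($\Ga$-compactness plus the div-curl theorem) is the paper's, but the middle step of your proposal -- the only place where Theorem~\ref{thmdc} is actually invoked -- is left unresolved, and it is precisely where the proof has to work. For an \emph{arbitrary} recovery sequence $\bar u_n$ there is no reason why ${\rm Div}\,(\AA_n D\bar u_n)$ should be compact in $W^{-1,q'}(\Om)^M$, and \eqref{recseqa} cannot produce this: it only describes how the flux acts on gradients of bounded-energy sequences, not the strong convergence of its divergence in a negative Sobolev space. You name this as ``the main obstacle'' but offer no mechanism to overcome it, so the core step is a gap, not a delicate point. The paper's device is to take as preferred recovery sequences the solutions $u_n$ of $-\,\Div\,(\AA_n Du_n)=f$ with $f$ running over a countable dense subset $\E$ of $L^2(\Om)^M$: these are automatically recovery sequences (via \eqref{recseq}), their flux has the \emph{fixed} divergence $-f$, and $\AA_n Du_n:Dv_n$ is bounded in $L^1(\Om)$, hence weakly convergent in $W^{-1,1}(\Om)$ after identification of its measure limit $\nu(u,v)$. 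The operators $\sigma$ and $\mu$ are first defined on the corresponding countable dense subset $E\subset D(F)$, then extended by continuity through the estimates \eqref{estmu-mv}--\eqref{estsu-sv}, and only afterwards does a comparison argument (with $\bar u_n\pm u_n$) show that arbitrary recovery sequences yield the same limits \eqref{cvAnDunDun}--\eqref{cvAnDun}. Note also that for $N=2$, $\La\notin L^1(\Om)$ one stays within Theorem~\ref{thmdc} (its $p=1$ branch \eqref{concte3}, since $1+\tfrac12<2$); Theorem~\ref{calipi1} is not needed here.

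A second, non-cosmetic error is your flux bound: H\"older applied to $|\AA_n D\bar u_n|\le|\AA_n|\,|D\bar u_n|$ with $|\AA_n|$ bounded in $L^\rho$ and $D\bar u_n$ bounded in $L^2$ gives boundedness only in $L^{2\rho/(\rho+2)}$, not in $L^p$ with $p=2\rho/(\rho+1)$; with that smaller exponent the condition $\tfrac1p+\tfrac12<1+\tfrac1{N-1}$ forces $\rho>N-1$, losing exactly the factor $2$ the theorem claims. The paper reaches $p=2\rho/(\rho+1)$ by exploiting symmetry and non-negativity of $\AA_n$ through the pointwise Cauchy--Schwarz inequality $|\AA_n\xi|\le(\AA_n\xi:\xi)^{1/2}|\AA_n|^{1/2}$ combined with the boundedness of the energy $\int_\Om\AA_n Du_n:Du_n\,dx$; the same trick gives \eqref{ethhm0e} and hence the bound \eqref{acoA}, which your ``elementary estimate'' $|\AA_n D\bar u_n|\le|\AA_n|\,|D\bar u_n|$ cannot deliver. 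Finally, your identification of $\AA$ via truncated affine fields only yields $\sigma(u)=\AA Du$ for $u$ smooth or in $W^{1,p'}$; the extension \eqref{relsiA3} to every $u\in D(F)$ requires the paper's additional argument (selection of good radii, Poincar\'e--Wirtinger and Sobolev imbedding on spheres, and the measure derivation theorem) leading to $|\mu^L(u)|\le C\,|\sigma(u)|\,|Du|$ a.e., which your sketch omits.
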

\begin{Rem} Assuming $N>2$ or $N=2$, $\Lambda\in L^1(\Om)$.
We deduce from \eqref{relmusi}, \eqref{relsiA2} and \eqref{funcF}, the following integral representation of $F$
\beq\label{repiF}
F(u)=\into \AA Du:Du\, dx,\quad\forall\, u\in W^{1,p'}_0(\Om)^M.
\eeq
If $N=2$, $\Lambda\not\in L^1(\Om)$, the above representation is also true for $u\in D(F)\cap C^1(\Om)$, but in this case the integral must be understood as an integral with respect to the measure $\AA$ and not with respect to Lebesgue's measure.
\end{Rem}
\begin{Rem} Let $f_n$ be a sequence which converges strongly to some $f$ in $H^{-1}(\Om)^M$ and let $u_n$ be the
solution of 
\beq\label{ecufn}
\left\{\ba{l}
\dis -\,{\rm Div}\,(\AA_n Du_n)=f_n\;\;\hbox{in }\Om
\\ \ecart
\dis u_n\in H^1_0(\Om)^M.
\ea\right.
\eeq
By \eqref{ellAal} $F_n(u_n)$ is bounded, and thus, up to a subsequence, there exists $u\in D(F)$ such that $u_n$ converges weakly to $u$ in $H^1_0(\Om)$. Since $F_n$ $\Gamma$-converges to $F$, this implies that $u_n$ is a recovery sequence for $F_n$ and that $u$ is the solution of 
\beq\label{eculi}
\left\{\ba{l}
\dis u\in D(F)
\\ \ecart
\dis \Psi(u,v)=\langle f,v\rangle,\quad\forall\,v\in D(F),
\ea\right.
\eeq
where $\Psi$ the bilinear form associated with $F$. By a uniqueness argument it is not necessary to extract any subsequence.
Moreover, convergence \eqref{cvAnDun} implies that $u$ is a solution of 
\beq\label{eculidi}
-\,{\rm Div}\,\sigma(u)=f\;\;\hbox{in }\Om,
\eeq
which taking into account \eqref{relsiA1}, \eqref{relsiA2}, can be read as
\beq\label{eculidi2}
-\,{\rm Div}\,(\AA Du)=f\;\;\hbox{in }\Om,
\eeq
in the following cases: $N>2$, $N=2$ and $\Lambda\in L^1(\Om)$, $N=2$ and $u\in C^1(\Om)$.
\end{Rem}
\begin{Rem}\label{rem23}
When $N=2$, the boundedness of $\AA_n$ in $L^1(\Om)^{(M\times N)^2}$ ensures the convergence \eqref{cvAnDun} of the flux. Similar compactness results in dimension two were obtained in the conductivity case \cite{BrCa,BrCa1} and in the elasticity case \cite{BrCE1}. When $N>2$, convergence \eqref{cvAnDun} holds when $\AA_n$ is bounded in $L^\rho(\Om)^{M\times N}$ with $\rho>(N-1)/2$. This condition is  stronger than the equi-integrability of $\AA_n$ in $L^1(\Om)^{M\times N}$, which leads to a compactness result in the scalar case of \cite{CaSb} ($M=1$). The proof of the scalar case is based on the maximum principle which does not hold for systems ($M>1$).
\end{Rem}
\noindent
{\bf Proof of Theorem~\ref{thmGc}.}
First all, note that by H\"older's inequality and \eqref{prho} we have
\[
\int_\Om\AA_n Du:Du\,dx\leq
\left\{\ba{ll}
\dis \left(\int_\Om|\AA_n|\,dx\right)\|Du\|_{L^\infty(\Om)^{M\times N}}, & \mbox{if }N=2
\\ \ecart
\dis \left(\int_\Om|\AA_n|^\rho\,dx\right)^{1\over\rho}\left(\int_\Om|Du|^{p'}\,dx\right)^{2\over p'}, & \mbox{if }N>2,
\ea\right.
\quad\forall\,u\in H^1_0(\Om)^M,
\]
which combined with condition \eqref{cvAnLa} implies that the domain of the $\Gamma$-limit satisfies
\beq\label{D(F)}
D(F)\supset \left\{\ba{ll}
\dis W^{1,\infty}_0(\Om)^M, & \mbox{if }N=2
\\ \ecart
\dis W^{1,p'}_0(\Om)^M, & \mbox{if }N>2.
\ea\right.
\eeq
\par
As above mentioned the existence of a subsequence of $n$ and a functional $F$ satisfying \eqref{gafnf} is well known.
The proof is divided in three steps.
\par\ms\noindent
{\it First step: Determination of the operators $\si$ and $\mu$.}
\par\ss\noindent
From \eqref{ellAal} we deduce the inequality
\beq\label{elipF}
\al\int_\Om\,|Dv|^2\,dx\leq F(v),\quad\forall\,v\in D(F),
\eeq
which combined with $C^1_0(\Om)^M\subset D(F)$ shows that $D(F)$ is continuously and densely imbedded in $H^1_0(\Om)^M$, and thus that $H^{-1}(\Om)^M$ is continuously and densely imbedded in $D(F)'$.
\par
Denoting by $\Psi:D(F)\times D(F)\to\RR$ the bilinear form associated with $F$ and taking a countable dense subset $\E$ of $L^2(\Om)^{M}$, define the set $E$ by
\beq\label{defD}
E:=\left\{u\in D(F):\exists\,f\in \E,\;\;\Psi(u,v)=\into f\cdot v\,dx,\ \forall\, v\in D(F)\right\}
\eeq
which is a dense and countable subset of $D(F)$.\par
For $f\in \E$, consider the solution $u_n$ of 
\beq\label{ethhm1}
\left\{\ba{l}\dis -\,\Div\,(\AA_nDu_n)=f\;\;\hbox{in }\Om
\\ \ecart
\dis u_n\in H^1_0(\Om)^M.\ea\right.
\eeq
By \eqref{ellAal} the sequence $u_n$ satisfies the estimate
\beq\label{ethhm0}
\into \AA_nDu_n:Du_n\,dx+\into |Du_n|^2\,dx\leq C.
\eeq
Hence, up to a subsequence, there exist $u\in H^1_0(\Om)^M$ and $\mu_u\in \M(\Om)^M$ such that 
\beq\label{ethhm0a} u_n\rightharpoonup u\;\;\hbox{in }H^1_0(\Om)^M,\eeq
\beq\label{ethhm0b} \AA_nDu_n:Du_n\stackrel{\ast} \rightharpoonup\mu_u \;\;\hbox{in }\M(\Om).\eeq
Taking into account that \eqref{ethhm1}  implies \eqref{recseq}, we  deduce that $u_n$ is a recovery sequence for $F_n$ of limit $u$ and that
\beq\label{ethhm1b}
\Psi(u,v)=\into f\cdot v\,dx,\quad \forall\, v\in D(F),
\eeq
Hence, $u$ is the element of $E$ associated with the function $f\in \E$ and
\beq\label{muOmF}
\mu_u(\Om)=F(u).
\eeq
By H\"older's inequality we have for any $\phi\in C^0_0(\Om)$, $\phi\geq 0$,
\[
\ba{l}\dis\into |\AA_nDu_n|^p\,\phi\,dx\leq \into (\AA_nDu_n:Du_n)^{p\over 2}\,|\AA_n|^{p\over 2}\,\phi\,dx
\\ \ecart
\dis\leq\left(\into \AA_nDu_n:Du_n\,\phi\,dx\right)^{p\over 2}\left(\into |\AA_n|^{p\over 2-p}\,\phi\,dx\right)^{1-{p\over 2}}
\\ \ecart
\dis =\left(\into \AA_nDu_n:Du_n\,\phi\,dx\right)^{p\over 2}\left(\into |\AA_n|^\rho\,\phi\,dx\right)^{1-{p\over 2}}.
\ea
\]
Hence, we deduce the existence of $\sigma_u$ such that
\beq\label{ethhm0d}
\left\{\ba{ll} \AA_nDu_n\stackrel{\ast} \rightharpoonup\sigma_u\;\;\hbox{in }\M(\Om)^{M\times N}, & \hbox{if }N=2
\\ \ecart
\dis \AA_nDu_n \rightharpoonup\sigma_u\;\;\hbox{in }L^p(\Om)^{M\times N},\  |\AA_nDu_n|^p\hbox{ equi-integrable}, & \hbox{if }N>2,
\ea\right.
\eeq
and by \eqref{cvAnLa} for any $\Phi\in C^0_0(\Om)^{M\times N}$,
\beq\label{ethhm0e}
\into \sigma_u:\Phi\,dx\leq
\left\{\ba{ll}
\dis \left(\into |\Phi|\,d\mu_u\right)^{1\over 2}\left(\into |\Phi|\,d\Lambda\right)^{1\over 2}, & \hbox{if }N=2
\\ \ecart
\dis \left(\into  |\Phi|\,d\mu_u\right)^{1\over 2}\left(\into |\Phi|\,d\Lambda\right)^{{1\over p}-{1\over 2}}\left(\into |\Phi|\,dx\right)^{{1\over p'}}
& \hbox{if }N>2.
\ea\right.
\eeq
By \eqref{D(F)} and \eqref{ethhm1} $\sigma_u$ also satisfies
\beq\label{ethhm1a}
\into \sigma_u:Dv\,dx=\Psi(u,v),\quad \forall\, v\in
\left\{\ba{ll}
C^1_0(\Om)^M, & \mbox{if } N=2
\\ \ecart
W^{1,p'}_0(\Om)^M, & \mbox{if } N>2.
\ea\right.
\eeq
\par
Since $\E$ is countable, these subsequences can be chosen independently of $f$. Moreover, taking two elements $f,g\in \E$, and denoting by $u$, $\mu_u$, $\sigma_u$ and by $v$, $\mu_v$, $\sigma_v$ the above defined elements associated with $f$ and $g$ respectively, we have 
\beq\label{estmu-mv}
\ba{l}
\dis\|\mu_u-\mu_v\|_{\M(\Om)}\leq \liminf_{n\to\infty}\into\big|A_nDu_n:Du_n-A_nDv_n:Dv_n\big|\,dx
\\ \ecart
\dis =\liminf_{n\to\infty}\into\big|A_nD(u_n+v_n):D(u_n-v_n)\big|\,dx
\\ \ecart
\dis \leq\lim_{n\to\infty}\left(\into A_nD(u_n+v_n):D(u_n+v_n)\,dx\right)^{1\over 2}\left(\into A_nD(u_n-v_n):D(u_n-v_n)\,dx\right)^{1\over 2}
\\ \ecart
\dis=\|u+v\|_{D(F)}\,\|u-v\|_{D(F)}.
\ea
\eeq
and (in the case $N=2$, $L^p(\Om)^{M\times N}$ must be replaced by $\M(\Om)^{M\times N}$)
\beq\label{estsu-sv}
\ba{ll}
\dis\|\sigma_u-\sigma_v\|^p_{L^p(\Om)^{M\times N}} & \dis \leq \liminf_{n\to\infty}\into\big|\AA_nD(u_n-v_n)\big|^p\,dx
\\ \ecart
& \dis \leq\liminf_{n\to\infty}\left(\into \AA_nD(u_n-v_n):D(u_n-v_n)\,dx\right)^{p\over 2}\left(\into |\AA_n|^\rho\,dx\right)^{1-{p\over 2}}
\\ \ecart
& \dis \leq C\,\|u-v\|^p_{D(F)}.
\ea
\eeq
This estimate allows us to extend by continuity the operators
\[
u\in E\mapsto \mu_u\in \M(\Om)\quad\mbox{and}\quad u\in E\mapsto \sigma_u\in
\left\{\ba{ll}
\M(\Om)^{M\times N}, & \mbox{if } N=2
\\ \ecart
L^p(\Om)^{M\times N}, & \mbox{if } N>2,
\ea\right.
\]
to operators defined in the whole domain $D(F)$, that we denote by $\sigma$ and $\mu$.
It is easy to check that $\mu$ is quadratic and $\sigma$ is linear.
Moreover, by \eqref{estmu-mv} and \eqref{estsu-sv} $\mu$ and $\sigma$ satisfy
\beq\label{ethhm2}
\|\mu(u)-\mu(v)\|_{\M(\Om)}\leq \|u+v\|_{D(F)}\,\|u-v\|_{D(F)},\quad\forall\, u,v\in D(F),
\eeq
\beq\label{ethhm3}
\left\{\ba{ll}
\dis \|\sigma(u)\|_{\M(\Om)^{M\times N}} \leq C\,\|u\|_{D(F)}, & \hbox{if } N= 2
\\ \ecart
\dis \|\sigma(u)\|_{L^p(\Om)^{M\times N}} \leq C\,\|u\|_{D(F)}, & \hbox{if } N>2,
\ea\right.
\eeq
\beq\label{ethhm4}
\into d\mu(u)=F(u),\quad\forall\, u\in D(F),
\eeq
\beq\label{ethhm5}
\into \sigma(u):Dv\,dx=\Psi(u,v),\quad \forall\,v\in
\left\{\ba{ll}
C^1_0(\Om)^M & \hbox{if } N=2
\\ \ecart
W^{1,p}_0(\Om)^M, & \mbox{if } N>2,
\ea\right.
\quad\forall\, u\in D(F).
\eeq
\par
Moreover, observe that for a given recovery sequence $\bar u_n$ of limit $\bar u\in D(F)$, taking $f\in \E$ and $u_n$, $u$ the solutions of \eqref{ethhm1}, \eqref{ethhm1b}, we have for any $\phi\in C^0_0(\Om)$,
\beq\label{ethhm5b}
\ba{l}\dis\left|\,\into\AA_nD\bar u_n:D\bar u_n\,\phi\,dx-\into\phi\,d\mu({\bar u})\,\right|
\leq \left|\,\into\AA_nD\bar u_n:D\bar u_n\,\phi\,dx-\into\AA_nDu_n:Du_n\,\phi\,dx\,\right|
\\ \ecart
\dis+ \left|\,\into\AA_nDu_n:Du_n\,\phi\,dx-\into\phi\,d\mu(u)\,\right|
+\left|\,\into\phi\,d\mu(u)-\into\phi\,d\mu({\bar u})\,\right|.
\ea
\eeq
Using that $\bar u_n+u_n$ and $\bar u_n-u_n$ are recovery sequences of limits $\bar u+u$ and $\bar u-u$ respectively, we also have 
\[
\ba{l}\dis\left|\,\into\AA_nD\bar u_n:D\bar u_n\,\phi\,dx-\into\AA_nDu_n:Du_n\,\phi\,dx\,\right|
\\ \ecart
\dis \leq\|\phi\|_{C^0_0(\Om)}\into\big|\AA_nD(\bar u_n+u_n):D(\bar u_n-u_n)\big|\,dx
\\ \ecart
\dis \leq \|\phi\|_{C^0_0(\Om)}\left(\into \AA_nD(\bar u_n+u_n):D(\bar u_n+u_n)\,dx\right)^{1\over 2}\left(\into \AA_n(D\bar u_n-u_n):D(\bar u_n-u_n)\,dx\right)^{1\over 2}
\\ \ecart
\dis \rightarrow \|\phi\|_{C^0_0(\Om)}\,\|\bar u+ u\|_{D(F)}\,\|\bar u- u\|_{D(F)}.
\ea
\]
Therefore, taking the limsup in \eqref{ethhm5b} and using \eqref{ethhm0b}, \eqref{ethhm2}, we get that
\[
\limsup_{n\to\infty}\dis\left|\,\into\AA_nD\bar u_n:D\bar u_n\,\phi\,dx-\into\phi\,d\mu({\bar u})\,\right|\leq
2\,\|\phi\|_{C^0_0(\Om)}\,\|\bar u+ u\|_{D(F)}\,\|\bar u- u\|_{D(F)},
\]
which by the density of $E$ in $D(F)$ implies that
\[
\lim_{n\to\infty}\into\AA_nD\bar u_n:D\bar u_n\,\phi\,dx=\into\phi\,d\mu({\bar u}),\quad\forall\,\phi\in C^0_0(\Om),
\]
Therefore, \eqref{ethhm0b} holds for any $u\in D(F)$ and any recovery sequence $u_n$ of limit $u$.
Analogously, \eqref{ethhm0d} holds for any $u\in D(F)$ and any recovery sequence $u_n$ of limit $u$.
\par
From the quadratic mapping $u\in D(F)\mapsto \mu(u)\in \M(\Om)$, we can now  construct the associated bilinear operator $\nu:D(F)\times D(F)\to \M(\Om)$, defined by
\beq\label{ethm6}
\nu(u,v):={1\over 4}\,\big(\mu(u+v)-\mu(u-v)\big),\quad \forall\, u,v\in D(F),
\eeq
which satisfies
\beq\label{ethm7}\AA_nDu_n:Dv_n\stackrel{\ast}\rightharpoonup \nu(u,v)\quad\hbox{in }\M(\Om),
\eeq
for any $u,v\in D(F)$ and any recovery sequences $u_n$ and $v_n$  of limits $u$ and $v$ respectively.
\par\ms\noindent
{\it Second step: Use of the div-curl result for the derivation of $\si$ and $\mu$.}
\par\ss\noindent
Let $u_n$ be the solution of equation \eqref{ethhm1} with $f\in E\subset D(F)$, and let $v_n$ be a recovery sequence of limit $v\in D(F)$.
Let us check that the sequences $\sigma_n:=\AA_nDu_n$ and $\eta_n:=Dv_n$ satisfy the assumptions of Theorem~\ref{thmdc}:
\par
First, by convergences \eqref{ethhm0a} and \eqref{ethhm0d} $\sigma_n$ and $\eta_n$ satisfy \eqref{cvvnpwnq} and \eqref{dvnqcwnp},  where $p\in(1,2)$ and $q=2$ are such that
\[
{1\over p}+{1\over 2}=1+{1\over 2\rho}<1+{1\over N-1},
\]
as well as condition \eqref{siLsetaLs'} with $s_n=2$. Next, by the Cauchy-Schwarz inequality combined with the boundedness of $F_n(u_n)$ and $F_n(v_n)$, the sequence $\sigma_n:\eta_n=\AA_nDu_n:Dv_n$ is bounded in $L^1(\Om)$, so that convergence \eqref{cvsinetanmu} holds (see the comment after \eqref{wcW-11}).
\par 
Then, the limit formulation \eqref{concte1} of Theorem~\ref{thmdc} shows that
\beq\label{ethm8b}
\into\psi\,d\nu(u,v)=\into f\cdot v\,\psi\,dx-\into \big(\sigma(u)\nabla\psi)\cdot v\,dx,
\eeq
where $\psi(x):=\varphi(|x-x_0|)$, and $\varphi$ satisfying the conditions \eqref{concte1} or \eqref{concte3} depending if $N>2$ or $N=2$.
\par
In particular, we can take in \eqref{ethm8b} a function $v\in D(F)$ such that for some open set $\om\subset\Om$,
\beq\label{espv}
v\in \left\{\ba{ll}
C^1(\om)^M, & \hbox{if }N=2,\ \Lambda\not\in L^1(\Om)
\\ \ecart
\dis W^{1,p'}(\om)^M, & \hbox{if }N>2\hbox{ or }N=2,\ \Lambda\in L^1(\Om),
\ea\right.
\eeq
a ball $B(x_0,R)\subset\om$, and a function $\varphi\in W^{1,\infty} (0,\infty)$ with ${\rm supp}\,\varphi\subset [0,R]$.
Also using that \eqref{ethhm1b} and \eqref{ethhm1a} imply
\beq\label{ethm9}
-\,\Div\,\sigma(u)=f\quad\hbox{in }\D'(\Om),
\eeq
and that by \eqref{ethhm0e} (which by continuity holds for any $u\in D(F)$) we have $\sigma(u)\in L^1(\Om)^{M\times N}$ if $\Lambda\in L^1(\Om)$,
we get that
\[
\into\psi\,d\nu(u,v)=\into \sigma(u): D(v\psi)\,dx-\into \big(\sigma(u)\nabla\psi)\cdot v\,dx=\into\sigma(u):Dv\,\psi\,dx.
\]
Taking in this equality $u=v_k\in E$ converging to $v$ in $D(F)$, it follows that
\beq\label{ethm9b}\into\psi\,d\mu(v)=\into\sigma(v):Dv\,\psi\,dx,\eeq
for any radial function $\psi$ with respect to some $x_0\in\om$ and with support in $\om$. By \eqref{ethhm0e} we then have
\[
\into \sigma(v):\Phi\,dx\leq\left\{\ba{ll}\dis  \left(  \into\sigma(v):Dv\,|\Phi|\,dx\right)^{1\over 2}\left(\into |\Phi|\,d\Lambda\right)^{1\over 2},
& \hbox{if }N=2
\\ \ecart
\dis \left(\into\sigma(v):Dv\,|\Phi|\,dx\right)^{1\over 2}\left(\into |\Phi|\,d\Lambda\right)^{{1\over p}-{1\over 2}}\left(\into |\Phi|\,dx\right)^{{1\over p'}},
& \hbox{if }N>2,
\ea\right.
\]
for any $\Phi\in C^0_0(\om)^{M\times N}$ radial with respect to some $x_0\in\om$. By the measures derivation theorem this yields 
\beq\label{ethm9c}
\left\{\ba{lll}
\dis\sigma(v)= H_v\Lambda\;\;\hbox{with}\;\;|H_v|\leq \big(H_v:Dv\big)^{1\over 2} & \Lambda\hbox{-a.e. in }\om, & \hbox{if }N=2
\\ \ecart
\dis |\sigma(v)|\leq\big(\sigma(v):Dv\big)^{1\over 2}\,(\Lambda^L)^{{1\over p}-{1\over 2}} & \hbox{a.e. in }\om, & \hbox{if }N>2,
\ea\right.
\eeq
Therefore, for any function $v$ satisfying \eqref{espv}, we obtain the estimates
\beq\label{ethm11}
\left\{\ba{lll}
\dis \sigma(v)=H_v\Lambda\;\;\hbox{with}\;\;|H_v|\leq |Dv| & \Lambda\hbox{-a.e. in }\om, & \hbox{if } N=2
\\ \ecart
\dis |\sigma(v)|\leq|Dv|\,(\Lambda^L)^{{2\over p}-1} & \hbox{a.e. in }\om, & \hbox{if } N>2.
\ea\right.
\eeq
\par\ms\noindent
{\it Third step: Expressions of $\si$ and $\mu$ in terms of the limit tensor $\AA$.}
\par\ss\noindent
Let $\Om_k$, $k\geq 1$, be an exhaustive sequence of open sets in $\Om$ such that
\beq\label{Omk}
\forall\,k\geq 1,\;\;\overline{\Om_k}\subset\Om_{k+1}\quad\mbox{and}\quad\bigcup_{k\geq 1}\Om_k=\Om.
\eeq
We associate with the open sets $\Om_k$ the functions $\phi_k$ satisfying
\beq\label{psik}
\forall\,k\geq 1,\quad\phi_k\in C^1_c(\Om_{k+1})\;\;\mbox{and}\;\;\phi_k\equiv 1\;\;\mbox{in }\Om_k.
\eeq
Then, define the tensor-valued measure $\AA$ by
\beq\label{Afiej}
\AA(f_i\otimes e_j):=\sum_{k=1}^\infty\sigma_{(\phi_kx_jf_i)}1_{\Om_k},\quad\mbox{for }1\leq i\leq M,\ 1\leq j\leq N.
\eeq
Given $\xi\in\RR^{M\times N}$ and applying \eqref{ethm11} to the functions $v-\phi_k\,\xi x$, we have
\[
\left\{\ba{lll}
\dis\sigma(v)-\AA\xi=H_{(v-\phi_k\xi x)}\Lambda\;\;\hbox{with}\;\;|H_{v-\phi_k\xi x}|\leq |Dv-\xi| & \Lambda\hbox{-a.e. in }\om\cap\Om_k, & \hbox{if }N=2
\\ \ecart
\dis |\sigma(v)-\AA\xi|\leq |Dv-\xi|\,(\Lambda^L)^{{2\over p}-1} & \hbox{a.e. in }\om\cap\Om_k, & \hbox{if }N>2.
\ea\right.
\]
Therefore, we get that
\beq\label{ethm12}
\sigma(v)=\AA Dv\;\;\hbox{in }\om,\quad\hbox{for any }v\in D(F)\hbox{ satisfying }\eqref{espv}.
\eeq
By \eqref{ethm9b} we also have
\beq\label{ethm12b}
\mu(v)=\AA Dv:Dv\;\;\hbox{in }\om,\quad\hbox{for any }v\in D(F)\hbox{ satisfying }\eqref{espv}.
\eeq
As a consequence, we obtain that if $u_1,u_2\in D(F)$ are such that there exists an open set $\om\subset\Om$ with $Du_1=Du_2$ in $\om$, then $\sigma(u_1-u_2)=0$ and $\mu(u_1-u_2)=0$ on $\om$.
Hence, from the Cauchy-Schwarz inequality we deduce that
\beq\label{ethm13a}
\sigma(u_1)-\sigma(u_2)=\sigma(u_1-u_2)=0\quad\hbox{in }\om,
\eeq
\beq\label{ethm13}
\ba{ll}
\dis \|\mu(u_1)-\mu(u_2)\|_{\M(\om)} & \dis =\|\nu(u_1+u_2,u_1-u_2)\|_{\M(\om)}
\\ \ecart
& \dis \leq \|\mu(u_1+u_2)\|^{1\over 2}_{\M(\om)}\,\|\mu(u_1-u_2)\|^{1\over 2}_{\M(\om)}=0,
\ea
\eeq
which establishes the local property \eqref{local} of $\sigma$ and $\nu$.
\par
From now on, assume that $N>2$ or $N=2$ and $\Lambda\in L^1(\Om)$, which implies that $\sigma(u)$ belongs to $L^p(\Om)^{M\times N}$ for any $u\in D(F)$.
For a function $u\in E$ and a ball $B(x_0,2R)\subset\Om$, define
\[
\bar u:={1\over |B_{2R}|}\int_{B(x_0,2R)}u\,dx,
\]
\begin{multline*}
U:=\left\{r\in (R,2R): \int_{\partial B(x_0,r)}\hskip-12pt\big(|u-\bar u|^2+r^2|Du|^2\big)\,ds(x)\right.
\\ \ecart
\left.\leq {2\over R}\int_{B(x_0,2R)}\hskip-12pt\big(|u-\bar u|^2+|x-x_0|^2|Du|^2\big)\,dx\right\}.
\end{multline*}
The set $U$ satisfies
\[
\big|(R,2R)\setminus U\big|\leq
{R/2\over\dis\int_{B(x_0,2R)}\hskip-12pt\big(|u-\bar u|^2+|x-x_0|^2|Du|^2\big)\,dx}\int_R^{2R}\hskip-3pt
\int_{\partial B(x_0,r)}\hskip-12pt\big(|u-\bar u|^2+r^2|Du|^2\big)\,ds(x)\,dr\leq {R\over 2},
\]
hence $|U|\geq R/2$.
\par
Next, define
\[
\varphi (r):={1\over |U|}\int_r^{2R} 1_U\,ds,\;\;\mbox{for }r\geq 0,\quad\mbox{and}\quad\psi(x):=\ph(|x-x_0|),\;\;\mbox{for }x\in\Om,
\]
and $v:=u-\bar u\,\phi$, for $\phi\in C^1_c(\Om)$ with $\phi\equiv 1$ in $B(x_0,2R)$.
By the local property \eqref{local} we have $\mu(u)=\nu(u,v)$ in $B(x_0,2R)$.
Putting this in formula \eqref{ethm8b} and noting that $\psi\equiv 1$ in $B(x_0,R)$, we obtain
\[
\dis \big|\mu(u)\big(\bar{B}(x_0,R)\big)\big|\leq\into\psi\,d\mu(u)=\into f\cdot(u-\bar u)\,\psi\,dx+\into\big(\sigma(u)\nabla\psi\big)\cdot(u-\bar u)\,dx.
\]
First, using Poincar\'e-Wirtinger's inequality in $B(x_0,2R)$ and H\"older's inequality in $\partial B(x_0,r)$, second Sobolev's imbedding of $H^1(\partial B(x_0,r))$ into $L^{p'}\big(\partial B(x_0,r)\big)$ (recall that ${1\over p}>{1\over 2}+{1\over N-1}$), third the definition of~$U$, H\"older's inequality in $(R,2R)$ and again Poincar\'e-Wirtinger's inequality in $B(x_0,2R)$, we get that
\[
\ba{l}
\dis \big|\mu(u)\big(\bar{B}(x_0,R)\big)\big|
\\ \ecart
\dis \leq CR\left(\int_{B(x_0,2R)}|f|^2\,dx\right)^{1\over 2}\left(\int_{B(x_0,2R)}|Du|^2\,dx\right)^{1\over 2}
\\ \ecart
\dis \qquad+\,{2\over R}\int_U\left(\int_{\partial B(x_0,r)}|\sigma(u)|^p\,ds(x)\right)^{1\over p}
\left(\int_{\partial B(x_0,r)}|u-\bar u|^{p'}\,ds(x)\right)^{1\over p'}\,dr
\\ \ecart
\dis \leq CR\left(\int_{B(x_0,2R)}|f|^2\,dx\right)^{1\over 2}\left(\int_{B(x_0,2R)}|Du|^2\,dx\right)^{1\over 2}
\\ \ecart
\dis \qquad +\,CR^{(N-1)({1\over p'}-{1\over 2})-1}\int_U\left(\int_{\partial B(x_0,r)}|\sigma(u)|^p\,ds(x)\right)^{1\over p}
\left(\int_{\partial B(x_0,r)}\big(|u-\bar u|^2+r^2|Du|^2\big)\,ds(x)\right)^{1\over 2}\,dr
\\ \ecart
\dis \leq CR\left(\int_{B(x_0,2R)}|f|^2\,dx\right)^{1\over 2}\left(\int_{B(x_0,2R)}|Du|^2\,dx\right)^{1\over 2}
\\ \ecart
\dis \qquad+\,CR^{N({1\over p'}-{1\over 2})}\left(\int_{B(x_0,2R)}|\sigma(u)|^p\,dx\right)^{1\over p}\left(\int_{B(x_0,2R)}|Du|^2\,dx\right)^{1\over 2}.
\ea
\]
Dividing this inequality by $|B(x_0,R)|$ and passing to the limit as $R$ tends to zero, we deduce that
\beq\label{ethm14}
\big|\mu^L(u)\big|\leq C\,|\sigma(u)|\,|Du|\quad\hbox{a.e. in }\Om,
\eeq
where $\mu^L(u)$ denotes the absolute continuous component of $\mu(u)$ with respect to Lebesgue's measure.
On the other hand, also remark that \eqref{ethhm0e} also implies that
\[
|\sigma(u)|\leq \big|\mu^L(u)\big|^{1\over 2}\,(\Lambda^L)^{{1\over p}-{1\over 2}}\quad\hbox{a.e. in }\Om,
\]
which combined to \eqref{ethm14} gives
\[
|\sigma(u)|\leq C\,|Du|\,(\Lambda^L)^{{2\over p}-1}\quad\hbox{a.e. in }\Om.
\]
This inequality is similar to \eqref{ethm11} (which was proved for $v$ smooth), and thus reasoning as for the derivation of \eqref{ethm12}, we get that
\beq\label{ethm15}
\sigma(u)=\AA Du\;\;\hbox{a.e. in }\Om,
\eeq
for any $u\in E$, and then by continuity for any $u\in D(F).$ Returning to \eqref{ethm14} and taking into account the density of $E$ in $D(F)$, we also have
\[
\big|\mu^L(u)\big|\leq C\,|\AA Du|\,|Du|\;\;\hbox{a.e. in }\Om,\quad\forall\, u\in D(F).
\]
Using this inequality with $u$ replaced by $u-\xi x$, $\xi\in\RR^{M\times N}$, and recalling that the mapping $u\mapsto\mu_L$ is quadratic and nonnegative, we obtain
\[
\ba{l}
\dis \big|\mu^L(u)-\AA\xi:\xi\big|\leq\big|\mu^L(u)-\mu^L({\xi x})\big|\leq\big|\mu^L(u+\xi x)\big|^{1\over 2}\big|\mu^L(u-\xi x)\big|^{1\over 2}
\\ \ecart
\dis \leq C\,\big|\AA (Du+\xi)\big|^{1\over 2}\,|Du+\xi|^{1\over 2}\,\big|\AA(Du-\xi)\big|^{1\over 2}\,|Du-\xi|^{1\over 2}
\quad\hbox{ a.e. in }\Om,\ \forall\,\xi\in\RR^N.
\ea
\]
This finally shows that
\beq\label{ethm16}
\mu^L(u)=\AA Du:Du\;\;\hbox{a.e. in }\Om,\quad\forall\, u\in D(F).
\eeq
\subsubsection{A H-convergence approach}
We have the following H-convergence type result. Is is similar to Theorem~\ref{thmGc} but with different assumptions, which allow to treat the case of non-symmetric tensor-valued functions $\AA_n$. 
\begin{Thm}\label{thmHc}
Let $\AA_n$ be a non-negative tensor-valued function in $L^\infty(\Om)^{(M\times N)^2}$ which satisfies conditions \eqref{ellAal} and \eqref{cvAnLa} with $\La\in L^\infty(\Om)$. Also assume that there exists a constant $C>0$ such that
\beq\label{gCSine}
\left(\AA_n\xi:\eta\right)^2\leq C\left(\AA_n\xi:\xi\right)\left(\AA_n\eta:\eta\right)
\quad\mbox{a.e. in }\Om,\ \forall\,\xi,\eta\in\RR^{M\times N}.
\eeq
Then, there exist a subsequence of $n$, still denoted by $n$, and a tensor-valued function $\AA$ in $L^\infty(\Om)^{(M\times N)^2}$ satisfying \eqref{ellAal} and
\beq\label{ACLa}
|\AA|\leq C\,\|\La\|_{L^\infty(\Om)}\quad\mbox{a.e. in }\Om,
\eeq
such that for any $f\in H^{-1}(\Om)^M$, the solution $u_n$ in $H^1_0(\Om)^M$ of the equation
\beq\label{eqAnun}
-\,\Div\left(\AA_n Du_n\right)=f\quad\mbox{in }\Om
\eeq
satisfies the convergences
\beq\label{unAnDun}
u_n\rightharpoonup u\;\;\mbox{weakly in }H^1_0(\Om)^M,\quad
\AA_n{D}u_n\rightharpoonup\AA{D}u\;\;\left\{\ba{ll}
\mbox{in }\M(\Om)^{M\times N}\,*, & \mbox{if }N=2
\\ \ecart
\mbox{in }L^p(\Om)^{M\times N}, & \mbox{if }N>2,
\ea\right.
\eeq
where $u$ is the solution in $H^1_0(\Om)^M$ of the equation
\beq\label{eqAu}
-\,\Div\left(\AA{D}u\right)=f\quad\mbox{in }\Om.
\eeq
\end{Thm}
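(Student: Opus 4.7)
The strategy is to imitate the proof of Theorem~\ref{thmGc}, replacing the $\Gamma$-convergence framework (which requires symmetry) by a diagonal extraction over a dense set of right-hand sides $f \in H^{-1}(\Om)^M$, and using the generalized Cauchy-Schwarz inequality \eqref{gCSine} as a substitute for the symmetric Cauchy-Schwarz estimates used there.

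First, I would establish the uniform estimates. Testing \eqref{eqAnun} with $u_n$ and using \eqref{ellAal} gives $\|u_n\|_{H^1_0} \leq C\,\|f\|_{H^{-1}}$ and $\int_\Om \AA_n Du_n : Du_n\,dx \leq C$. Applying \eqref{gCSine} with $\xi := Du_n$ and $\eta$ ranging over directions in $\RR^{M\times N}$ yields the pointwise bound $|\AA_n Du_n|^2 \leq C\,(\AA_n Du_n : Du_n)\,|\AA_n|$, and H\"older's inequality with the exponent $p := {2\rho\over\rho+1}$ from \eqref{prho} then shows that $\sigma_n := \AA_n Du_n$ is uniformly bounded in $L^p(\Om)^{M\times N}$.

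Next, fix a countable dense subset $\E$ of $H^{-1}(\Om)^M$ and, by a diagonal extraction, find a subsequence along which, for every $f \in \E$, the solution $u_n^f$ of \eqref{eqAnun} satisfies $u_n^f \rightharpoonup u^f$ weakly in $H^1_0(\Om)^M$ and $\sigma_n^f := \AA_n Du_n^f \rightharpoonup \sigma^f$ weakly in $L^p(\Om)^{M\times N}$; by linearity and the uniform bounds, these convergences extend to continuous operators $f \mapsto u^f$ and $f \mapsto \sigma^f$ on all of $H^{-1}(\Om)^M$. To construct $\AA$, I would mimic the third step of the proof of Theorem~\ref{thmGc}: apply Theorem~\ref{thmdc} to the pair $(\sigma_n^f, Du_n^g)$ for $f, g \in \E$ --- the exponent condition ${1\over p} + {1\over 2} = 1 + {1\over 2\rho} < 1 + {1\over N-1}$ holds because $\rho > {N-1\over 2}$, $\Div\,\sigma_n^f = -f$ is fixed, $\Curl\,(Du_n^g) = 0$, and the product $\sigma_n^f : Du_n^g$ is uniformly bounded in $L^1(\Om)$ by Cauchy-Schwarz applied to \eqref{gCSine} together with the energy estimate. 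The weak formulation \eqref{concte1}, combined with the localization argument of \eqref{Afiej}--\eqref{ethm16} based on the auxiliary test functions $\phi_k x_j f_i$, then produces a measurable tensor $\AA$ satisfying $\sigma^f = \AA\,Du^f$ a.e.\ in $\Om$, and the pointwise bound $|\AA| \leq C\,\|\Lambda\|_{L^\infty}$ (stronger than \eqref{acoA} because $\Lambda \in L^\infty$).

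Finally, ellipticity of $\AA$ follows from \eqref{ellAal} by lower semicontinuity of $\int_\Om \AA_n Du_n^f : Du_n^f\,dx$ combined with the div-curl identification $\int_\Om \sigma^f : Du^f\,dx = \int_\Om \AA\,Du^f : Du^f\,dx$. Well-posedness of \eqref{eqAu} together with a uniqueness argument on the limit equation shows that the extracted subsequence may be chosen independently of $f$, and the density of $\E$ in $H^{-1}(\Om)^M$ yields \eqref{unAnDun} for every $f$. The principal difficulty lies in the identification $\sigma^f = \AA\,Du^f$ without the symmetric quadratic form of Theorem~\ref{thmGc}: here the hypothesis \eqref{gCSine} plays the role that the symmetric Cauchy-Schwarz inequality played before, and the div-curl result of Section~\ref{s.dc} is what allows passing to the limit in \eqref{concte1} so as to identify the tensor structure.
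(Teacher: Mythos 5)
Your preliminary steps are sound and do follow the intended scheme: the energy estimate, the pointwise consequence $|\AA_n Du_n|^2\leq C\,(\AA_n Du_n:Du_n)\,|\AA_n|$ of \eqref{gCSine}, the resulting bound on the fluxes in $L^p(\Om)^{M\times N}$ (or in $\M(\Om)^{M\times N}$ if $N=2$), the diagonal extraction over a countable dense set of data $f$, and the remark that $\AA_n Du_n^f:Du_n^g$ is bounded in $L^1(\Om)$ so that Theorem~\ref{thmdc} applies to the pairs $(\AA_n Du_n^f,Du_n^g)$. Note that the paper itself omits this proof and refers to the Murat--Tartar H-convergence scheme and to the linear case of Theorem~5.2 of \cite{BCM}, i.e.\ to the method of oscillating test functions combined with the new div-curl result.

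The genuine gap is the identification of the tensor $\AA$. You invoke ``the localization argument of \eqref{Afiej}--\eqref{ethm16} based on the auxiliary test functions $\phi_k x_j f_i$'', but that argument is not available in your setting. In the proof of Theorem~\ref{thmGc} the flux operator $\sigma$ is defined on the whole domain $D(F)\supset W^{1,p'}_0(\Om)^M$, hence in particular on the locally affine functions $\phi_k x_j f_i$ entering \eqref{Afiej}, because every $v\in D(F)$ admits a recovery sequence, and the cross-term convergence \eqref{recseqa} together with the continuity estimates \eqref{estmu-mv}--\eqref{estsu-sv} are consequences of the quadratic, variational structure: this is exactly what symmetry buys. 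In your construction the limit flux $\sigma^f$ exists only for limits $u^f$ of solutions of \eqref{eqAnun}, and nothing guarantees that functions with prescribed locally affine gradients are of this form or can be suitably approximated; hence $\AA$ as in \eqref{Afiej} is simply not defined, and neither the locality, nor the bound \eqref{ACLa}, nor the ellipticity \eqref{ellAal} of $\AA$, nor the fact that a single $\AA$ serves for every $f$, follows from what you wrote. Applying Theorem~\ref{thmdc} only to pairs $(\AA_n Du_n^f,Du_n^g)$ does not repair this: it expresses the limit of $\AA_n Du_n^f:Du_n^g$ through $\sigma^f$ and $u^g$, but for a non-symmetric $\AA_n$ this cannot be confronted with the limit of $\AA_n Du_n^g:Du_n^f$, which is the mechanism by which a pointwise linear relation between $\sigma^f$ and $Du^f$ is extracted. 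The standard remedy, and what the quoted scheme uses, is to construct oscillating test functions from the transposed operator: $\AA_n^T$ satisfies the same coercivity \eqref{ellAal} and the same inequality \eqref{gCSine}, so the auxiliary problems $-\,\Div(\AA_n^T Dw_n)=g$ enjoy identical estimates; one then applies the div-curl theorem twice, to $(\AA_n Du_n^f,Dw_n)$ and to $(\AA_n^T Dw_n,Du_n^f)$, and exploits the pointwise identity $\AA_n Du_n^f:Dw_n=\AA_n^T Dw_n:Du_n^f$ to obtain $\sigma^f=\AA Du^f$ with $\AA$ independent of $f$, the bound \eqref{ACLa} then following from \eqref{gCSine} and $\La\in L^\infty(\Om)$ as in your flux estimate. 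Your sketch contains neither the adjoint problems nor any substitute (e.g.\ a monotonicity-type argument), so the central point of the theorem remains unproved.
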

\begin{Rem}
The extra condition \eqref{gCSine} compensates the fact that $\AA_n$ is not necessarily symmetric. The price to pay with respect to the $\Gamma$-convergence result of Theorem~\ref{thmGc} is that the limit $\Lambda$ of \eqref{cvAnLa} needs to be in $L^\infty(\Om)$.
\end{Rem}
\begin{Rem}
Theorem~\ref{thmHc} is an extension to non equi-bounded coefficients of the classical H-convergence of Murat-Tartar \cite{Mur,Tar}.
In dimension two Theorem~\ref{thmHc} includes the scalar case of \cite{BrCa} and the elasticity case of \cite{BrCE1}. In higher dimension it generalizes the H-convergence of \cite{BCM} thanks to the improvement of the div-curl result.
\end{Rem}
\par
The proof of  Theorem~\ref{thmHc} follows the same scheme as the Murat-Tartar H-convergence \cite{Mur} and some of its extensions \cite{BrCa,BCM,BrCE1}. In particular it is quite similar to the proof of Theorem~5.2 \cite{BCM} restricted to the linear case, using the new div-curl result of Theorem~\ref{thmdc}. So we omit it.
\subsection{Weak continuity of the Jacobian}
Let $\Om$ be a regular bounded open set of $\RR^N$, $N\geq 2$.
It is well known that the distributional determinant defined for $u=(u^1,\cdots,u^N):\Om\to\RR^N$ by (where cof denotes the cofactors matrix)
\beq
{\rm Det}\,(Du):=\sum_{j=1}^N\partial_j\left[u^i\,{\rm cof}\,(Du)_{ij}\right],\quad\mbox{for }1\leq i\leq N,
\eeq
agrees with the determinant $\det\,(Du)$ if $u\in W^{1,N}(\Om)^N$ (see, {\em e.g.}, \cite{Dac}, Lemma 2.7 for further details), but the situation is more delicate if $u$ is less regular.
There has been a lot of works about the distributional determinant, its link with the determinant and its weak continuity; we refer to \cite{Ball, BaMu, Dac, DaMu, Mor, Mul} for various contributions in the topic. In particular, M\"uller showed \cite{Mul} that
\beq\label{Mdet}
{\rm Det}\,(Du)=\det\,(Du),\quad\forall\,u\in W^{1,s}(\Om)^N,\ \forall\,s\geq {N^2\over N+1},
\eeq
whenever ${\rm Det}\,(Du)\in L^1(\Om)$.
In connection with this result, one has (see \cite{Ball,DaMu}, and also \cite{IwSb,FLM} for refinements)
\beq\label{BDet}
u_n\rightharpoonup u\;\;\mbox{in }W^{1,s}(\Om)^N\;\;\Rightarrow\;\;{\rm Det}\,(Du_n)\rightharpoonup{\rm Det}\,(Du)\;\;\mbox{in }\D'(\Om),
\quad\forall\,s>{N^2\over N+1}.
\eeq
Up to our knowledge the most recent result is due to Brezis and Nguyen \cite{BrNg} who proved that for any $s\in [N-1,\infty]$ and for any vector-valued functions $u_n,u$ in $L^\infty(\Om)^N$,
\beq\label{BNdet}
\left.\ba{ll}
u_n\rightharpoonup u & \mbox{in }W^{1,s}(\Om)^N
\\ \ecart
u_n\to u & \mbox{in }L^{s\over s-N+1}(\Om)^N,\;\;\mbox{or}
\\ 
u_n\to u & \mbox{in }B\!M\!O(\Om)^N,\;\;\hbox{if }s=N-1\geq 2
\ea\right\}
\;\Rightarrow\;{\rm Det}\,(Du_n)\rightharpoonup{\rm Det}\,(Du)\;\;\mbox{in }\D'(\Om).
\eeq
\par
In view of the div-curl results of Section~\ref{s.dc}, we will prove a weak continuity result for the Jacobian assuming that ${\rm Det}\,(Du_n)$ converges weakly in $W^{-1,1}(\Om)$ and that $u_n$ converges slightly better than weakly in $W^{1,N-1}(\Om)$.
\begin{Thm}\label{thmdet}
Let $\Om$ be a bounded open set of $\RR^N$, with $N\geq 2$.
Consider a sequence $u_n$ in $W^{1,N}(\Om)^M$ satisfying
\beq\label{thmde1}
{\rm Det}\,(Du_n)\rightharpoonup \mu\;\;\hbox{in }W^{-1,1}(\Om).
\eeq
We have the following alternative:
\begin{itemize}
\item Assume that there exists $s>N-1$ such that
\beq\label{cvunqN}
u_n\rightharpoonup u\quad\mbox{in }W^{1,s}(\Om)^N.
\eeq
If $u\in W^{1,N}(\Om)^N$, then $\mu={\rm Det}\,(Du)$.
\\
Otherwise, $\mu$ is given by the weak formulation
\beq\label{muDet>N-1}
\left\{\ba{l}
\dis \forall\,B(x_0,R)\Subset\Om,\ \forall\,\varphi\in W^{1,\infty}(0,\infty),\mbox{ with }{\rm supp}\,\varphi\subset [0,R],
\\ \ecart
\dis \langle\mu,\psi\rangle=-\into\left(\sum_{j=1}^N{\rm cof}\,(Du)_{1j}\,\partial_j\psi\, u^1\right)dx,\quad\hbox{where }\psi(x):=\varphi(|x-x_0|).
\ea\right.
\eeq
\item Or else, assume that
\beq\label{codjun}
u_n\rightharpoonup u\quad
\left\{\ba{ll}
\hbox{in }W^{1,N-1}(\Om)^N, & \mbox{if }N>2
\\ \ecart
\hbox{in }BV(\Om)^N *, & \mbox{if }N=2,
\ea\right.
\eeq
and that $\nabla u_n^1$ belongs to $L^{N-1,1}(\Om)^N$ with the equi-integrability condition
\beq\label{equN-1J}
\forall\,\ep>0,\,\exists\,\delta>0,\ \|\nabla u^1_n\|_{L^{N-1,1}(E)^{M\times N}}
\leq\ep,\ \forall\,n\in\NN,\,\forall\,E\mbox{ measurable}\subset\Om,\,|E|<\delta.
\eeq
If $u\in W^{1,N}(\Om)^N$, then $\mu={\rm Det}\,(Du)$.
\\
Otherwise, $\mu$ is given by the weak formulation
\beq\label{muDetN-1}
\left\{\ba{l}
\dis \forall\,B(x_0,R)\Subset\Om,\ \forall\,\varphi\in W^{1,\infty}(0,\infty),\mbox{ with }{\rm supp}\,\varphi\subset [0,R],\mbox{ such that }
\\ \ecart\dis
\exists\,U\mbox{closed set of }[0,R],\mbox{ with }u(x_0+ry)\in C^0(U;X^{1,N-1}(S_{N-1}))^M,\ {\rm supp}\,(\ph')\subset U,
\\ \ecart
\dis \langle\mu,\psi\rangle=-\into\left(\sum_{j=1}^N{\rm cof}\,(Du)_{1j}\,\partial_j\psi\, u^1\right)dx,\quad\hbox{where }\psi(x):=\varphi(|x-x_0|),
\ea\right.
\eeq
and $X^{1,N-1}(S_{N-1})$ is the space defined by \eqref{X1N-1}.
\end{itemize}
\end{Thm}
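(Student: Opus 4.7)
\medskip
\noindent
\textbf{Proof proposal for Theorem~\ref{thmdet}.}
The plan is to rewrite the distributional determinant as a div--curl product and apply the results of Section~\ref{s.dc}. Expanding along the first row,
\[
{\rm Det}\,(Du_n)=\sum_{j=1}^N\partial_j\bigl[u_n^1\,{\rm cof}\,(Du_n)_{1j}\bigr]=\sigma_n\cdot\eta_n,\qquad
\sigma_n:=\bigl({\rm cof}\,(Du_n)_{1j}\bigr)_{j=1}^N,\quad\eta_n:=\nabla u_n^1,
\]
where the second equality uses the Piola identity $\div\,\sigma_n=0$ (valid distributionally since $u_n\in W^{1,N}(\Om)^N$) together with $u_n^1\in W^{1,N}(\Om)$. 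Of course $\curl\,\eta_n=0$ as $\eta_n$ is a gradient. Hence the hypothesis~\eqref{thmde1} reads precisely as $\sigma_n\cdot\eta_n\rightharpoonup\mu$ in $W^{-1,1}(\Om)$, which is the compensation assumption~\eqref{cvsinetanmu} of the div--curl theorems.

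In the first case, the Sobolev bound $u_n\rightharpoonup u$ in $W^{1,s}(\Om)^N$ with $s>N-1$ gives $\sigma_n$ bounded in $L^{s/(N-1)}(\Om)^N$ and $\eta_n$ bounded in $L^s(\Om)^N$. Setting $p:=s/(N-1)>1$ and $q:=s$, one has $1/p+1/q=N/s<1+1/(N-1)$, so Theorem~\ref{thmdc} applies with strict inequality. The classical weak continuity of minors (the $(N-1)\times(N-1)$ sub-determinants are themselves distributional divergences, so the argument can be iterated for $s>N-1$) yields $\sigma_n\rightharpoonup\sigma:=\bigl({\rm cof}\,(Du)_{1j}\bigr)$ in $L^{s/(N-1)}(\Om)^N$, while trivially $\Div\,\sigma_n=0\to\Div\,\sigma=0$ (again by Piola in the limit) and $\Curl\,\eta_n=0\to\Curl\,\eta=0$. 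In the second case, the $W^{1,N-1}$-bound places $\sigma_n$ in $L^1(\Om)^N$, hence weakly-$*$ convergent in $\M(\Om)^N$ to $\sigma:=\bigl({\rm cof}\,(Du)_{1j}\bigr)$ (the identification of the limit being the delicate point discussed below), while the Lorentz equi-integrability~\eqref{equN-1J} combined with Dunford--Pettis in $L^{N-1,1}$ delivers $\eta_n\rightharpoonup\eta:=\nabla u^1$ in $L^{N-1,1}(\Om)^N$; the critical case $p=1$, $q=N-1$ of Theorem~\ref{calipi1} then applies.

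The point I would exploit to avoid the choice of an auxiliary potential $u$ is to take $u:=u^1$ itself in the weak formulations~\eqref{concte1} and~\eqref{concte4}. Since $\eta-Du=\nabla u^1-\nabla u^1=0$, the conditions~\eqref{eta-DuLp'} and~\eqref{eta-DuLN-11} are automatic and the integrand collapses:
\[
\sigma:\bigl[\eta\psi-D(u\psi)\bigr]=\sigma:\bigl[\nabla u^1\,\psi-\nabla(u^1\psi)\bigr]=-(\sigma\nabla\psi)\,u^1=-u^1\sum_{j=1}^N{\rm cof}\,(Du)_{1j}\,\partial_j\psi.
\]
Because $\Div\,\sigma=0$ from Piola in the limit, the first term $\langle\Div\,\sigma,u\psi\rangle$ in~\eqref{concte1}/\eqref{concte4} vanishes, and the formulations reduce to exactly~\eqref{muDet>N-1}/\eqref{muDetN-1}. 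If in addition $u\in W^{1,N}(\Om)^N$, then $u^1\,{\rm cof}\,(Du)\in L^1(\Om)^N$ admits the distributional divergence ${\rm Det}\,(Du)$, and an integration by parts in~\eqref{muDet>N-1}/\eqref{muDetN-1} identifies $\mu={\rm Det}\,(Du)$.

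The hard part, in my view, is the identification of the weak limit of the cofactor row $\sigma_n$ in the two regimes. For $s>N-1$ it is standard: the $(N-1)\times(N-1)$ sub-determinants entering $\sigma_n$ can themselves be written as divergences, and an inductive div--curl argument on the size of the minor (exactly in the spirit of \cite{Mor,Res,Ball,Mul}) gives weak convergence in $L^{s/(N-1)}$. For the critical regime $s=N-1$ with only $L^1$-bounds on $\sigma_n$, the identification $\sigma_n\rightharpoonup^{*}\,{\rm cof}\,(Du)$ in $\M(\Om)^N$ is the genuine obstacle; here the Lorentz equi-integrability of $\nabla u_n^1$ plays no role, so one must argue recursively: the $(N-1)\times(N-1)$ minors of $Du_n$ involve only $\nabla u_n^2,\dots,\nabla u_n^N$, which are bounded in $L^{N-1}$, and applying Theorem~\ref{calipi1} (or inductively Theorem~\ref{thmdc}) to each sub-minor yields their weak-$*$ convergence in $\M(\Om)$ to the corresponding minor of $Du$. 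Once this identification is in hand, the proof is complete.
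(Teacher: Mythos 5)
Your proposal coincides, in all essentials, with the paper's proof: the same factorization ${\rm Det}\,(Du_n)=\sigma_n\cdot\eta_n$ with $\sigma_n$ the first cofactor row and $\eta_n=\nabla u_n^1$, the same exponents $p=s/(N-1)$, $q=s$ (resp.\ $p=1$, $q=N-1$), the same appeal to Theorem~\ref{thmdc} (resp.\ Theorem~\ref{calipi1}), the same collapse of the weak formulations by taking the potential $u:=u^1$ (so that $\eta-Du=0$ and ${\rm Div}\,\sigma=0$), and the same integration by parts when $u\in W^{1,N}(\Om)^N$. The only point where you depart from the paper is the identification of the weak-$*$ limit of $\sigma_n$ in the critical case, which you present as the genuine obstacle and propose to settle by a recursive application of Theorems~\ref{thmdc}/\ref{calipi1} to sub-minors. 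The paper simply invokes the classical weak continuity of minors (\cite{Dac}, Corollary~2.8), and this is legitimate here: the row $\big({\rm cof}\,(Du_n)_{1j}\big)_j$ involves only $u_n^2,\dots,u_n^N$, and (for $N>2$) each entry is, after expansion along the $u_n^2$-row, a pairing of $\nabla u_n^2$ (weakly convergent in $L^{N-1}(\Om)^N$, curl free) against a vector of $(N-2)$-minors of $(u_n^3,\dots,u_n^N)$ (weakly convergent in $L^{(N-1)/(N-2)}(\Om)$ by the standard supercritical result, divergence free by the Piola identity); these exponents are conjugate, so the classical Murat--Tartar div--curl lemma already gives convergence in $\D'(\Om)$, and the $L^1$-bound upgrades it to weak-$*$ convergence in $\M(\Om)$ towards ${\rm cof}\,(Du)_{1j}$ (for $N=2$ the cofactor row is $\pm(\partial_2u_n^2,-\partial_1u_n^2)$ and the convergence is immediate from \eqref{codjun}). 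Your recursion is the same mechanism, but as written it is both heavier and slightly gappy: Theorems~\ref{thmdc} and~\ref{calipi1} require the product to converge weakly in $W^{-1,1}(\Om)$ (assumption \eqref{cvsinetanmu}), which is not among the hypotheses for the sub-minor products and would have to be recovered by a subsequence/uniqueness-of-limit argument, whereas the conjugate-exponent lemma needs no such assumption. Apart from this, and from the routine verification of \eqref{siLsetaLs'} (the common exponent $s_n=N'$ works since $u_n\in W^{1,N}(\Om)^N$), your argument is the paper's.
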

\begin{Rem}\label{remdet}
The first case of Theorem~\ref{thmdet} provides an improvement of the weak continuity \eqref{BNdet} with $s>N-1$, given in \cite{BrNg}. Indeed, if a sequence $u_n$ converges weakly in $W^{1,s}\Om)$ and strongly in $L^{s\over s-N+1}(\Om)^N$, then by the classical weak convergence of the Jacobian (see, {\em e.g.}, \cite{Dac}, Corollary~2.8) ${\rm cof}\,(Du_n)$ converges to ${\rm cof}\,(Du)$ in $L^{s\over N-1}(\Om)^{N\times N}$. Hence, since the exponents ${s\over s-N+1}$ and ${s\over N-1}$ are conjugate, we obtain the weak convergence
\[
\sum_{j=1}^N u_n^1\,{\rm cof}\,(Du_n)_{1j}\rightharpoonup\sum_{j=1}^N u^1\,{\rm cof}\,(Du)_{1j}\quad\mbox{in }L^1(\Om),
\]
which thus implies assumption \eqref{thmde1}. More generally, a sufficient condition to ensure assumption \eqref{thmde1} is that
\[
\sum_{j=1}^N u_n^1\,{\rm cof}\,(Du_n)_{1j}\quad\mbox{is equi-integrable in }L^1(\Om).
\]
\par
The second case of Theorem~\ref{thmdet} proposes an alternative to the delicate weak continuity \eqref{BNdet} with $s:=N-1$, obtained in \cite{BrNg} (Theorem~1), in which the strong convergence of $u_n$ in $L^\infty(\Om)^N$ is replaced by the weak convergence of ${\rm Det}\,(Du_n)$ in $W^{-1,1}(\Om)$ combined with the equi-integrability of $\nabla u_n^1$ in the Lorentz space $L^{N-1,1}(\Om)^N$. There is no link between these two sets of assumptions.
\end{Rem}
\noindent
{\bf Proof of Theorem~\ref{thmdet}.}
First of all and similarly to the proof of Theorem~\ref{thmdc}, the regularity assumption $u\in W^{1,N}(\Om)^N$ implies that the weak formulations \eqref{muDet>N-1} and \eqref{muDetN-1} lead to the equality $\mu={\rm Det}\,(Du)$. It thus remains to treat the general case for $s>N-1$ and $s=N-1$.
\par\ms\noindent
{\it The case: $s>N-1$.}
\par\ss\noindent
In view of the classical weak continuity \eqref{BDet} we may restrict ourselves to the case $s\leq{N^2\over N+1}$.
Define $p:={s\over N-1}$ and $q:=s$. Let us check that the sequences of vector-valued functions $\eta_n:=\nabla u_n^1$ and $\sigma_n$ defined by
\beq\label{sincof}
\sigma^i_n:=\big[{\rm cof}\,(Du_n)\big]_{1i},\quad\mbox{for }1\leq i\leq N.
\eeq
satisfy the assumptions of Theorem~\ref{thmdc}:
\par
First, $\sigma_n$ and $\eta_n$ satisfy condition \eqref{siLsetaLs'} with exponent $N'\in[p,q']$ since
\[
p={s\over N-1}\leq {N^2\over N^2-1}\leq N'\leq {N^2\over N^2-N-1}=\left({N^2\over N+1}\right)'\leq s'=q'.
\]
while $p,q>1$ satisfy the inequality
\[
{1\over p}+{1\over q}={N\over s}<1+{1\over N-1}.
\]
Next, $\si_n$ is divergence free and by the classical weak convergence of the Jacobian (see \cite{Dac}, Corollary~2.8) $\sigma_n$ converges weakly in $L^p(\Om)^N$ (since $p={s\over N-1}$) to the function $\sigma=(\sigma^1,\dots,\sigma^N)$ given by
\beq\label{sicof}
\sigma^i=\big[{\rm cof}\,(Du)\big]_{1i},\quad\mbox{for }1\leq i\leq N.
\eeq
Moreover, $\eta_n$ is curl free and converges weakly to $\nabla u^1$ in $L^q(\Om)^N$.
Therefore, taking into account convergence \eqref{thmde1}, Theorem~\ref{thmdc} through \eqref{concte1} yields the desired limit formulation \eqref{muDet>N-1}.
\par\ms\noindent
{\it The case: $p:=1$ and $q:=N-1$.}
\par\ss\noindent
Let us check that the sequences of vector-valued functions $\sigma_n$ defined by \eqref{sincof} and $\eta_n:=\nabla u_n^1$ satisfy the assumptions of Theorem~\ref{calipi1}:
\par
As in the previous case $\sigma_n$ and $\eta_n$ satisfy condition \eqref{siLsetaLs'} with $s=N'$.
Next, $\sigma_n$ is divergence free in $\Om$, and by the classical weak convergence of the Jacobian (see, \cite{Dac}, Corollary~2.8) $\sigma_n$ converges weakly-$*$ in $\M(\Om)^N$ to the function $\sigma$ defined by \eqref{sicof}.
Moreover, $\eta_n$ is curl free and converges weakly to $\nabla u^1$ in $L^{N-1}(\Om)^N$.
Therefore, taking into account conditions \eqref{thmde1} and \eqref{equN-1J} Theorem~\ref{calipi1} (see also Remark~\ref{rem.dcLN-11}) applies and leads to the limit formulation~\eqref{muDetN-1}, which concludes the proof.
\cqfd
\par\bigskip
The following example shows that Theorem~\ref{thmdet} does not hold if we just assume that $u_n$ converges weakly in $W^{1,N-1}(\Om)$ for $N>2$, or weakly-$\ast$ in $BV(\Om)$ if $N=2$. We refer to \cite{DaMu} (Theorem~1) to an alternative counterexample with the critical space $W^{1,{N^2/(N+1)}}(\Om)$ related to the weak continuity~\eqref{BDet}.
\begin{Exa}\label{cexDet}
Let $N\geq 2$, and let $\Om$ be the cylinder $B'_1\times(0,1)$,  where $B'_1$ is the unit ball of $\RR^{N-1}$. The points of $\Om$ are denoted by $(x',x_N)$. We also use $x'$ to denote a point of $\Om$ whose last coordinate is zero.
\par
Define in cylindrical coordinates the vector-valued function $u_n$ in $\Om$ by
\[
u_n(x):=(1-r)^n\,(nx',x_N),\quad\mbox{for }x\in\Om,\quad\mbox{where }r:=|x'|.
\]
Then, we have
\[
Du'_n=-\,n\,(1-r)^{n-1}\left[n\,{x'\otimes x'\over r}-(1-r)\,I_{N-1}\right],
\]
\[
\nabla u^N_n=(1-r)^{n-1}\left[-\,n\,{x_N\,x'\over r}+(1-r)\,e_N\right].
\]
Therefore, as a consequence of \eqref{paliej} we get that
\begin{multline*}
\into |Du_n|^{N-1}\,dx\leq C\,n^{2N-2}\int_0^1r^{2N-3}(1-r)^{(n-1)(N-1)}\,dr.
\\ \ecart
+C\,n^{N-1}\int_0^1r^{N-2}(1-r)^{(n-1)(N-1)}\,dr+C\leq C.
\end{multline*}
This  combined with the convergence of $u_n$ to zero a.e. in $\Om$, implies that
\[
\left\{\ba{ll}
\dis u_n\stackrel{\ast}\rightharpoonup 0\;\;\hbox{in }BV(\Om)^N & \hbox{if } N=2
\\ \ecart
\dis u_n\rightharpoonup 0\;\;\hbox{in }W^{1,N-1}(\Om)^N & \hbox{if } N>2.
\ea\right.
\]
On the other hand, it is easy to check that
\[
{\rm Det}\,(Du_n)=\det\,(Du_n)=n^{N-1}\,(1-r)^{nN}-n^N\,r\,(1-r)^{nN-1}r.
\]
Hence, again using \eqref{paliej} we conclude that
\[
\ba{ll}
\dis {\rm Det}\,(Du_n)\stackrel{\ast}\rightharpoonup &
\dis |S_{N-2}|\,\lim_{n\to\infty}\left[\,\int_0^1\big(n^{N-1}r^{N-2}(1-r)^{nN}-n^N r^{N-1}(1-r)^{nN-1}\big)\,dr\,\right]\left(\delta_{\{x'=0\}}\otimes 1\right)
\\ \ecart
& \dis =|S_{N-2}|\,{(N-2)!\over N^N}\left(\delta_{\{x'=0\}}\otimes 1\right)\not= 0\quad(\mbox{with }|S_0|:=1).
\ea
\]
\par
Finally, note that
\[
\lim_{n\to\infty}\|u_n\|_{L^\infty(\Om)}=\lim_{n\to\infty}\left[\max_{r\in[0,1]}\big\{n\,r\,(1-r)^n\big\}\right]={1\over e}>0,
\]
which also illustrates the sharpness of the weak continuity result \eqref{BNdet} in \cite{BrNg}.  
\end{Exa}
\end{document}